\newtheorem{Theorem}{Theorem}[section]
\newtheorem{theorem}[Theorem]{Theorem}
\newtheorem{lemma}[Theorem]{Lemma}
\newtheorem{remark}[Theorem]{Remark}
\newtheorem{Definition}[Theorem]{Definition}
\DeclareMathOperator{\dist}{dist} 
 \DeclareMathOperator{\cl}{cl}
\DeclareMathOperator{\diam}{diam}
\newcommand{\R}{{\mathbb R}}
\newcommand{\N}{{\mathbb N}}
\newcommand{\Z}{{\mathbb Z}}
\newcommand{\al}{\alpha}
\newcommand{\be}{\beta}
\newcommand{\ga}{\gamma}
\newcommand{\ep}{\epsilon}
\newcommand{\De}{\Delta}
\newcommand{\om}{\omega}
\newcommand{\Om}{\Omega}
\newcommand{\La}{\Lambda}
\newcommand{\LA}{\Lambda}
\newcommand{\la}{\lambda}
\newcommand{\varep}{\varepsilon}
\newcommand{\codt}{\cdot}
\newcommand{\grad}{\nabla}
\newcommand{\cE}{\mathcal{E}}
\newcommand{\cA}{\mathcal{A}}
\newcommand{\als}{\alpha}
 \title{Equilibria with a nontrivial nodal set and the 
dynamics of  parabolic equations on symmetric domains
}
 \author{Juraj F\" oldes\\
\small IMA, University of Minnesota\\
\small Minneapolis, MN 55455\\
~\\
Peter Pol\'a\v cik\footnote{Supported in part by  NSF Grant
 DMS-1161923  
}  
\\
\small School of Mathematics, University of Minnesota\\
\small Minneapolis, MN 55455
%^\\~\\
}
\date{\empty}
\begin{document}

\numberwithin{equation}{section}

\maketitle

%%%---------
%%% Abstract
%%%---------

\begin{abstract}
We consider the Dirichlet problem
\begin{alignat}{2} \label{eqa}
u_t &= \Delta u + f(x, u, \nabla u)+ h(x, t),& \qquad &(x, t) \in \Om
\times (0, \infty)\,, \\  
u &= 0, & \qquad &(x, t) \in \partial\Om \times (0, \infty)\,, \label{bca}
\end{alignat}
on a bounded domain $\Om\subset \R^N$. The domain and
the nonlinearity $f$ are assumed to be 
invariant under the reflection about the
$x_1$-axis, and the function $h$ accounts for
a nonsymmetric decaying perturbation:
 $h(\codt, t)\to 0$ as $t\to\infty$. In
one of our main theorems,  we prove the asymptotic symmetry of each
bounded positive solution $u$ of \eqref{eqa}, \eqref{bca}. 
The novelty of this 
result is that the asymptotic 
symmetry is established even for solutions that are
not assumed uniformly positive. 
In particular, some equilibria of the limit
time-autonomous problem (the problem with $h\equiv 0$) 
with  a nontrivial nodal set  may occur in the $\om$-limit set 
of $u$ and this prevents one from applying common techniques based
 on the method of moving hyperplanes.
The goal of our second main theorem is to classify the positive entire 
solutions of the time-autonomous problem.  We prove that if $U$ is a
positive entire solution, then one of the following applies:
(i) for each $t\in \R$, $U(\codt,t)$ is even in $x_1$ and decreasing in
$x_1>0$, (ii) $U$ is an equilibrium, (iii) $U$ is a connecting orbit
from an equilibrium with a nontrivial nodal set to a set consisting of 
functions which are even in $x_1$ and decreasing in
$x_1>0$, (iv) is a heteroclinic connecting orbit between two
equilibria with a nontrivial nodal set.
\end{abstract}

\ \\ 

\noindent
Keywords:  semilinear parabolic equations, asymptotic symmetry,
classification of entire
solutions, equilibria with a nontrivial nodal set, Morse decomposition

\ \\

\section{Introduction}

In this paper, we consider two classes of semilinear parabolic
problems, 
\begin{alignat}{2} \label{eq0}
u_t &= \Delta u + f(x, u, \nabla u),& \qquad &(x, t) \in \Om
\times (0, \infty)\,, \\  
u &= 0, & \qquad &(x, t) \in \partial\Om \times (0, \infty)\,, \label{bc0}
\end{alignat}
and
\begin{alignat}{2} \label{eq1}
u_t &= \Delta u + f(x, u, \nabla u) + h(x, t),& \qquad &(x, t) \in \Om \times (0, \infty)\,, \\
u &= 0, & \qquad &(x, t) \in \partial\Om \times (0, \infty)\,. \label{bc1}
\end{alignat}
Here $\Om\subset \R^N$ is a bounded domain and 
$f:(x,u,p)\mapsto f(x,u,p):\bar \Om\times [0,\infty)\times \R^{N+1}\to\R$ 
is a continuous function, 
which is Lipschitz in $u$ and $p$.  In \eqref{eq1},  $h$ 
is a bounded continuous function which decays to 0 as $t\to
\infty$.  Thus  \eqref{eq1} is an asymptotically
autonomous equation,  \eqref{eq0} being its 
``limit'' autonomous equation.  We only consider nonnegative
solutions:  by not defining $f(x,u,p)$ for  $u<0$, we
postulate that only nonnegative functions can be solutions of 
\eqref{eq0}, \eqref{eq1}. 

We further assume that $\Om$ is symmetric
about the hyperplane 
$$H_0:=\{(x=(x_1,\dots,x_N)\in \R^N:x_1=0\}$$
 and convex in the direction of the $x_1$-axis. Also we make a symmetry
 assumption on $f$ (see (F2) in the next section), 
which makes equation 
\eqref{eq0} equivariant under
the reflections about the hyperplanes parallel to $H_0$.
We then examine the dynamics of global   
 solutions of \eqref{eq0}, \eqref{bc0}, and \eqref{eq1}, \eqref{bc1} 
from the  symmetry point of view. One of our main objectives is to clarify 
whether all  global bounded solutions of \eqref{eq1},
\eqref{bc1} are asymptotically symmetric. 
Toward that goal, we  investigate the dynamics of the limit  
problem \eqref{eq0}, \eqref{bc0}; in particular, we want to understand
how the dynamics is affected by the presence of
 equilibria with nontrivial nodal set.

To spell our goals out, 
let us briefly summarize earlier results pertinent to our study. 
It is well known that positive steady
states of \eqref{eq0}, \eqref{bc0} are symmetric  about $H_0$ and strictly
decreasing with increasing $|x_1|$, see \cite{Berestycki-N, Dancer:symm, Gidas-N-N:bd, 
Li:symm-bd} (for related symmetry results see also 
\cite{Berestycki:survey, Da-L-S,  Kawohl:survey, Ni:survey, Pucci-S}
and references therein).  The steady states which are nonnegative, but
not strictly positive, obviously fail to have the strict monotonicity 
property. However, as  shown in \cite{P:symm-ell}, they
still enjoy the symmetry about $H_0$ and, in addition, they are
reflectionally symmetric within their nodal domains (in particular, the
nodal domains themselves are reflectionally symmetric). Examples 
of nonnegative steady states with a nontrivial nodal set can be found in
 \cite{P:symm-ell, p-Terr}. 
 
For time-dependent solutions of \eqref{eq0}, \eqref{bc0}, 
two kinds of symmetry results are available, one dealing with
\emph{entire solutions,} that is, solutions defined 
for all $t\in\R$, the other one with \emph{global solutions,} 
that is, solutions defined 
for all $t>0$. 

For  entire solutions, the spatial 
symmetry about $H_0$ and monotonicity in $x_1>0$ 
was established in \cite{Babin:symm1, Babin-S}. The hypotheses of this
result include in particular a uniform positivity  condition requiring
 the solutions in question to stay away from zero at each point $x\in \Om$, 
uniformly in time. 

In the case of global solutions, 
the symmetry at all times cannot be established unless  
the initial condition is symmetric. Rather, it has been proved that global
positive solutions are asymptotically symmetric in the sense that all
their limit profiles as $t\to\infty$, or elements of their $\om$-limit sets, are symmetric 
about $H_0$ and monotone nondecreasing in $x_1>0$ (see 
\cite{Babin:symm1, Babin-S, p-He:symm, P:est}, 
related results can be found in 
\cite{Foldes:bd, P:symm-survey, Saldana-W}). 
Similarly as for elliptic equations \cite{Berestycki-N}, 
the symmetry results for parabolic equations concerning 
the entire solutions \cite{Babin-S} as well as the global solutions
\cite{p-Fo:asympt-symm, P:est}, 
have been proved for fully nonlinear equations on nonsmooth 
domains. Moreover, in the results for the parabolic equations,
 the nonlinearities are allowed to depend on $t$, and some asymptotic
 symmetry results can also be established if the equation is
 asymptotically symmetric, not necessarily symmetric at all times 
\cite{F-asy:Rn, p-Fo:asympt-symm}. Some sort of uniform positivity condition 
is always used in the proof of these results and, alongside
the asymptotic symmetry of the solutions,  one also proves
their asymptotic monotonicity with respect to $x_1$ (for $x_1\ge 0$).

This brings us to the main topic of this paper.
We want to address the question whether the asymptotic symmetry of a
solution  $u$ can be proved even if  its $\om$-limit set may possibly
contain  functions which are not monotone in $x_1>0$. 
Consider for example the situation  
when problem  \eqref{eq0}, \eqref{bc0}
possesses nonnegative equilibria with nontrivial nodal sets. 
If $u$ is a positive global solution of \eqref{eq0}, \eqref{bc0} or
\eqref{eq1}, \eqref{bc1}, then, without any uniform positivity
assumption on $u$, some of these equilibria  
can appear in the $\om$-limit set of $u$ (an example where this 
happens can be found in \cite[Example 2.3]{P:est}).  This prevents $u$
from being  asymptotically monotone in $x_1>0$, which makes it
clear that the asymptotic symmetry of $u$ cannot be established by a 
direct application of the moving plane method. In this regard, 
this symmetry problem is quite different from the ones discussed above. 
Also note that even for the time-autonomous equation 
\eqref{eq0}, there is  no obvious gradient structure and 
a given solution may not approach a set of equilibria. 
Therefore, it is not a priori  clear whether the symmetry of equilibria, 
as proved in  \cite{P:symm-ell}, has  
any significance for the asymptotic symmetry 
of general positive solutions.

Letting the asymptotic symmetry problem aside for a while, 
the question of how the equilibria with nontrivial 
nodal sets enter into the asymptotic behavior of positive solutions
is quite interesting itself.
Clearly, the presence of such an equilibrium 
$\varphi$ in  the $\om$-limit set of a solution $u$ means
that at some times $t_k \to \infty$, the function
$u(\cdot,t_k)$ has a ``near-nodal set'' resembling
the nodal set of $\varphi$. If two such equilibria are contained  in 
$\om(u)$, then the graph of $u(\cdot,t)$ 
forms two different  near-nodal patterns and repeatedly transfers
from one to the other, as $t\to\infty$.  
Surprisingly perhaps, we show that this does not
happen. One of our main results, Theorem \ref{thm1}, addresses  
this issue as well as
the asymptotic symmetry problem. 
It says that  if $u$ is a bounded global solution of \eqref{eq1},
\eqref{bc1}, then the 
following alternative concerning its $\om$-limit set holds. 
Either $\om(u)$ consists of functions which are symmetric about $H_0$ and 
monotone nonincreasing in $x_1>0$, or it consists 
of a single nonnegative equilibrium with a nontrivial nodal set.
Since the nonnegative equilibria are all  symmetric about $H_0$, this
result implies the asymptotic symmetry of the solution $u$. Also it
shows that the only way $u$ may fail to be asymptotically monotone 
in $x_1>0$ is that it converges to an equilibrium with a nontrivial nodal
set. In particular, if $u$ has a nontrivial asymptotic nodal pattern,
then the pattern is unique.

Our second main result concerns entire solutions of  
\eqref{eq0}, \eqref{bc0}. Such solutions play a distinguished role in
the global dynamics of \eqref{eq0}, \eqref{bc0}. For example, if a
global attractor of \eqref{eq0}, \eqref{bc0} exists, then it is formed
by entire solutions (see \cite{Hale:bk-diss, Temam:bk}).  
It is also well-known that the $\om$-limit sets 
of solutions of \eqref{eq1}, \eqref{bc1} consist of 
entire solutions of  \eqref{eq0}, \eqref{bc0}. This is 
relevant for the asymptotic symmetry result discussed above. Namely,
the asymptotic symmetry of bounded positive solutions of  
\eqref{eq1}, \eqref{bc1} would be proved if one could establish 
the symmetry of all nonnegative entire solutions 
of  \eqref{eq0}, \eqref{bc0}. 
The problem whether all entire  solutions of \eqref{eq0}, \eqref{bc0}, 
even those which are not monotone in $x_1>0$, are symmetric about $H_0$ 
at all times is open and we do not resolve it in this paper. 
It appears to be much harder than in the case of equilibria. 
In particular, the method of \cite{P:symm-ell}, which works 
well for nonnegative solutions of
elliptic equations,  does not extend to time dependent
solutions of parabolic equations. What we can prove for 
a general nonnegative  entire solution $U$ is  that  
one of the following
possibilities occurs: 
\begin{itemize}
\item[(i)] $U(\cdot,t)$ is 
symmetric about $H_0$ and monotone in $x_1>0$ for each $t\in \R$,
\item[(ii)] $U$ is a nonnegative equilibrium with a nontrivial nodal set,
\item[(iii)] $\om(U)$ consists of functions, which are symmetric about $H_0$ and 
monotone nonincreasing in $x_1>0$, and $\al(U)$ consists 
of a single nonnegative equilibrium with a nontrivial nodal set,
\item[(iv)] $U$ is a connecting orbit between two equilibria, each
  having a nontrivial nodal set.
\end{itemize}
 See Theorem \ref{thm2} and Remark \ref{rmtothm2}(a) 
below for more  precise statements. Here $\al(U)$
stands for the $\al$-limit set of $U$, that is, the set of all limit
profiles of $U(\cdot,t)$ as $t\to-\infty$. 

The above result implies that either  $U$ is 
symmetric about $H_0$ (cases (i) and (ii)) or else $U(\cdot,t)$ converges to
an equilibrium $\psi$ as $t\to-\infty$  
 (cases (iii) and (iv)). Even with this
additional information, it is not clear whether $U$ is 
symmetric. While there are results on the symmetry of the unstable
manifold of a positive equilibrium $\psi$ (see \cite{Babin:symm2,
  p-Ha} or \cite{P:symm-survey}), they depend on comparison arguments
and the positivity of the function 
$-\partial_{x_1}\psi$ in $\{x\in \Om:x_1>0\}$.  No simple modification of
such arguments applies if $\psi$ has a nontrivial nodal set. 

In any case, our theorem  gives an interesting description of the behavior of  
a general  nonnegative entire solution and, although it does not give the
symmetry of all entire solutions, it is sufficient for the proof of  
the  asymptotic symmetry  result discussed above. 
Let us explain briefly how we
use (i)-(iv) in the  proof of the asymptotic symmetry.
Inspired by \cite{p-He:symm}, we consider a functional 
$\La$ arising in  the  process of moving hyperplanes. Roughly speaking, 
for a function $z$, $\La(z)$ measures 
how far to the left one can move the hyperplane
$H_\la=\{x\in\R^N:x_1=\la\}$,   
while preserving a relation between the
graph of $z$ and its reflection through the hyperplane $H_\la$. 
In \cite{p-He:symm}, $\La$ was shown to be 
 decreasing along positive solutions
which are not symmetric from the start.  
Thus, $\La$ can be viewed as a strict Lyapunov functional and from
this viewpoint  the asymptotic symmetry result 
of \cite{p-He:symm} is a manifestation 
of the LaSalle invariance principle. In our more general setting, 
particularly due to the lack of any smoothness of $\Om$, 
we cannot prove that $\La$ is a strict Lyapunov functional, 
however, it serves us in a similar way. Specifically, 
we   prove that if $U$ is a connecting orbit as in  (iv),
then $U$ connects an equilibrium with higher ``energy'' (the
value of  $\La$) to an equilibrium with lower energy
(see Remark \ref{rmtothm2}(d) below). 
This conclusion, combined with  a 
chain-recurrence property of  $\om(u)$,
implies that if an equilibrium $\psi \in \om(u)$ has 
a nontrivial nodal set, 
then necessarily $\om(u)=\{\psi\}$. On the other hand, in view of the
possibilities  
(i)-(iv), if  $\om(u)$ contains no such equilibrium, 
then it consists of entire solutions satisfying (i).  
 This implies the conclusion of  our  
asymptotic symmetry theorem.

We conclude the introduction with a few remarks concerning the
structure of equations \eqref{eq0}, \eqref{eq1}. 
We work with autonomous or asymptotically autonomous equations
mainly because our goal was to understand how the equilibria 
with a nontrivial nodal set fit into the dynamics of nonnegative
solutions. We have chosen to consider semilinear equations, rather than
fully nonlinear equations, in order not to obscure the key ideas 
by additional technicalities. Since the equations do not have  
a gradient structure, the main conceptual difficulties 
 are already present in this study.

The remainder of the paper is organized as follows.
In the next section we state our main results. Their proofs are given
in Sections \ref{proof}, \ref{proof2}. 
Section \ref{prelim} contains preliminary material, including 
a discussion of linear equations  arising in the process 
of moving hyperplanes and some symmetry results from earlier papers. 
Basic monotonicity properties 
of the functional $\La$ along solutions of
nonautonomous symmetric equations are also   derived 
in Section \ref{prelim}.

\section{Main results}
\label{main}
We start by precisely stating our hypotheses.
\begin{itemize}
\item[(D1)] $\Om \subset \R^N$ is a bounded domain, which is
  symmetric with respect to $H_0$ and convex in the direction of the $x_1$-axis.
\item[(D2)] For each $\la > 0$, the set 
\begin{equation*}
\Om_\la := \{x \in \Om : x_1 > \la\}
\end{equation*}
has only finitely many connected components.
\item[(A)] There exist numbers $\varsigma \in (0, 1)$ and $R > 0$
  such that for each $x \in \partial \Om$, $\rho \in (0, R)$,  
one has
\begin{equation*}
|\Om \cap B(x, \rho)| \leq \varsigma |B(x, \rho)| \,,
\end{equation*}
where $B(x, \rho)$ is  the ball of radius $\rho$ centered at $x$ and
$|\codt|$ stands for the Lebesgue measure.  
\end{itemize}

Condition (A) is a minor regularity requirement on $\Om$ which allows
us to use  boundary H\"older estimates on the solutions of
\eqref{eq1}, \eqref{bc1}. The technical condition (D2) is assumed in
order to make the results of \cite{P:symm-ell} applicable.

Concerning the functions $f:(x,u,p)\mapsto f(x,u,p):
\bar \Om\times [0,\infty)\times \R^{N}\to\R$ and $h:\Om \times (0,
\infty)\to \R$ 
we assume the following.

\begin{itemize}

\item[(F1)] (\textit{Regularity})
$f$ is continuous on $\bar \Om\times [0,\infty)\times \R^N$,  
\begin{equation}
  \label{eq:3}
  f \in
C^{\alpha_0}_{\textrm{loc}}(\Om \times [0, \infty)\times \R^N)
\end{equation}
for some $\alpha_0 \in (0,1)$, and $f$ 
is differentiable with bounded derivatives
 with respect to $(u, p)$. In particular, $f$
 is Lipschitz  in  $(u, p)$ uniformly with respect to
$x \in \bar{\Om}$:  there is $\beta_0 >0$ such that
\begin{multline*}
    \sup_{x\in \bar{\Om}} |f(x, u,p)-f(x,\tilde u,\tilde p)|\le
    \beta_0 |(u,p)-(\tilde u,\tilde p)|\qquad \\ 
    (x\in\bar\Om, (u,p),(\tilde u,\tilde p)\in  [0,\infty) \times \R^N)\,.
  \end{multline*} 
\item[(F2)] (\textit{Symmetry}) $f$ is independent of $x_1$ and even
  in $p_1$: 
  \begin{multline*}
  f(x, u,-p_1,p_2,\dots,p_N)=f(x, u,p_1,p_2,\dots,p_N)\\ (x,
u,p_1,p_2,\dots,p_N)\in \bar \Om\times [0,\infty) \times \R^N).
  \end{multline*}
\item[(H)\ ] $h$ is  continuous and bounded,  and 
\begin{equation*}
\lim_{t \to \infty} \|h (\cdot, t)\|_{L^\infty(\Om)} = 0 \,.
\end{equation*}
\end{itemize}

When dealing with nonlinear problems \eqref{eq0},
  \eqref{bc0} and \eqref{eq1},
  \eqref{bc1} (or problem \eqref{tdc} introduced below), we
  always consider classical solutions. In particular a global solution
  of \eqref{eq1}, \eqref{bc1} is a function $u \in
  C^{2,1}_{\textrm{loc}}(\Om \times (0, \infty)) \cap C(\bar{\Om}
  \times [0,\infty)) $ satisfying the equation and the boundary
  condition everywhere. We shall consider global solutions which are
  bounded (this simply means that $u$ is a bounded function on
  $\Om\times (0,\infty)$).

To formulate our main results, we need to introduce some notation. For
$\la\in \R$, we set 
\begin{equation*}
\begin{aligned}
H_{\la} &:= \{x \in \R^N : x_1 = \la \} ,  \\
\ell &:= \sup \{\la : \Om \cap H_{\la} \neq \emptyset \} \,, \\
\Om_{\la} &:= \{x \in \Om : x_1 > \la \},
\end{aligned}
\end{equation*}
and let $P_{\la} : \R^N \to \R^N$ be the reflection about
 $H_{\la}$, that is, 
 \begin{equation*}
   P_{\la}x := (2\la - x_1, x') \quad (x = (x_1, x') \in \R^N).
 \end{equation*}
For any function 
$z \in C(\bar{\Om})$, we define 
$V_\la z : \Om_\la \to \R$ by 
\begin{equation*}
  V_\la z(x) := z(P_\la x)-z(x).
\end{equation*}
Since $\Omega$ is convex in $x_1$, $V_\lambda z$ is well defined for
any $\lambda \geq 0$.

Let  $C_0(\bar{\Om})$ stand for  
the space of continuous functions on $\bar{\Om}$ 
vanishing on $\partial \Om$  
equipped with the supremum norm.
We define a functional $\La : C_0(\bar{\Om}) \to [0, \ell]$ by 
\begin{equation*}
\La (z) := \inf\{\la \in (0, \ell] : V_\mu z(x) \geq 0 \quad(x \in
\Om_{\mu},\ \mu > \la) \}.
\end{equation*}
Since $\Om_\ell = \emptyset$,  the set on the right hand side trivially
contains $\la=\ell$, thus  $\La(z) \in [0, \ell]$ is well defined for
each $z \in C_0(\bar{\Om})$. 

\begin{remark}\label{nng}
\textnormal{
It is clear from the definition of $\La(z)$ that   
$z$ is nonincreasing in $x_1$ in $\Om_{\La(z)}$. 
}
\end{remark}

 We denote by $E$ the set of equilibria  (time-independent solutions) 
of \eqref{eq0}, \eqref{bc0}. We allow ourselves a harmless ambiguity 
and view the equilibria as 
elements of $C_0(\bar{\Om})$ or as functions on
$\Om\times \R$ constant in time, depending on the context.  
Set
\begin{equation*}
  \begin{aligned}
E_0 &:= \{z \in E : \La(z) = 0\} \,,\\
  E_+ &:= E\setminus E_0= \{z \in E : \La(z) > 0\} \,.
\end{aligned}
\end{equation*}
Recall that we only consider nonnegative solutions ($f(x,u,p)$ 
is not defined for $u<0$). In particular,   
all equilibria of \eqref{eq0}, \eqref{bc0} are nonnegative
hence, by \cite{P:symm-ell}, they are all even in  $x_1$.
By Remark \ref{nng}, the set $E_0$ 
consists of the equilibria which are nonincreasing 
in $x_1>0$; in fact, each of them is either identically equal to zero or
strictly positive and decreasing in $x_1>0$. 
It also follows from \cite{P:symm-ell} that  
$E_+$ is the set  of all equilibria whose nodal set
in $\Om$ is nontrivial (different from $\Om$ and $\emptyset$).

As usual, we shall discuss the asymptotic behavior of a bounded
solution $u$ of \eqref{eq1}, \eqref{bc1} in terms of its $\omega$-limit set. 
For that we first note that the semi-orbit $\{u(\cdot, t) : t \in
(1, \infty)\}$ is precompact in $C_0(\bar{\Om})$.
This is a consequence of Arzel\`a-Ascoli theorem and the following
H\"older estimate 
\begin{equation}\label{eq:hol1}
\sup_{ \substack{x, \bar{x} \in \bar{\Om}, x \neq \bar{x} \\
      t, \bar{t} \in [s, s+1], t\neq \bar{t}\\
      s \in [1, \infty)}}
\frac{|u(x, t) - u(\bar{x}, \bar{t})|}{|x - \bar{x}|^\alpha + |t - \bar{t}|^{\alpha/ 2}} <\infty\,.    
\end{equation}
The fact that any bounded solution satisfies
\eqref{eq:hol1}  for  some  $\al\in (0,1)$
is proved in \cite[Proposition 2.7]{P:est} for
fully nonlinear equations, including  \eqref{eq0} as a special case;
the proof is really just a summary of  well-known interior and 
boundary H\"older estimates (condition (A) is needed for the boundary
estimates; the interior estimates hold irrespectively  of condition
\eqref{eq:3} which  is not assumed \cite{P:est}).    
One just needs to verify
an extra assumption in \cite[Proposition 2.7]{P:est}, which in our case
requires the boundedness of the function
 $(x, t) \mapsto f(x, 0, 0) + h(x, t)$ on $\Om\times
(0,\infty)$. This requirement is clearly met due to hypotheses
(F1) and (H). 

Once the precompactness of  $\{u(\cdot, t) : t \in
(1, \infty)\}$ has been established, it follows by standard results that 
the $\omega$-limit set of $u$ in $C_0(\bar{\Om})$, that is, the set 
\begin{equation*}
\omega (u) := \bigcap_{t > 0} \cl_{C_0(\bar{\Om})} \{u(\cdot, s): s \geq t\},
\end{equation*} 
is nonempty, compact, connected, 
and it attracts the semi-orbit of $u$: 
\begin{equation} \label{oas}
\lim_{t \to \infty} \dist_{C_0(\bar{\Om})}(u(\cdot, t), \omega(u)) = 0\,.
\end{equation}

Our main theorem concerning the bounded solutions of \eqref{eq1},
\eqref{bc1} can
 now be stated as follows.

\begin{theorem}\label{thm1}
Assume (D1), (D2), (A), (F1), (F2), (H), and let $u$ be a bounded global
 solution of \eqref{eq1}, \eqref{bc1}. Then 
each $z \in \omega(u)$ is even in $x_1$:
\begin{equation*}
  z(x_1, x') = z(-x_1, x')\quad ((x_1, x') \in \Om).
\end{equation*}
Moreover, either $\omega(u) = \{z\}$ for some 
$z \in E_+$, or each $z \in \omega(u)\setminus \{0\}$ 
is (strictly) decreasing in $x_1$ on $\Om_0$.
\end{theorem}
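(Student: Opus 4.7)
The approach combines the classification of entire solutions provided by Theorem \ref{thm2} with a Lyapunov-type monotonicity of the functional $\La$, to constrain which orbits can appear in $\omega(u)$. Standard results on asymptotically autonomous semiflows (see e.g.\ \cite{Hale:bk-diss, Temam:bk}) show that $\omega(u)$ is nonempty, compact, connected, and invariant under the semiflow generated by the limit autonomous problem \eqref{eq0}, \eqref{bc0}. Hence each $z\in\omega(u)$ lies on an entire trajectory $U:\R\to C_0(\bar \Om)$ of \eqref{eq0}, \eqref{bc0} with $U(\R)\subset\omega(u)$, and Theorem \ref{thm2} tells us that $U$ must be of one of the four types (i)--(iv).

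Next, I would invoke the monotonicity of $\La$ along solutions, established in Section \ref{prelim}: $\La(u(\cdot,t))$ is asymptotically nonincreasing in $t$, so $\ell_\infty:=\lim_{t\to\infty}\La(u(\cdot,t))\in[0,\ell]$ exists. Combining this with the lower semi-continuity of $\La$ on $C_0(\bar{\Om})$, the invariance of $\omega(u)$, and the monotonicity of $\La$ along each entire trajectory inside $\omega(u)$, one deduces the key fact that $\La\equiv\ell_\infty$ on $\omega(u)$. This constancy at once rules out types (iii) and (iv) in $\omega(u)$: a type (iii) orbit $U$ would have $\La>0$ on $\al(U)\subset E_+$ and $\La=0$ on $\omega(U)$, both contained in $\omega(u)$ by invariance and compactness; a type (iv) orbit would have two equilibria of different $\La$-values in $\omega(u)$, contradicting constancy.

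Thus $\omega(u)$ contains only orbits of types (i) and (ii). In the case $\ell_\infty=0$, every $z\in\omega(u)$ satisfies $\La(z)=0$: type (ii) equilibria then lie in $E_0$, which by \cite{P:symm-ell} and Remark \ref{nng} consists of the zero function and strictly decreasing positive functions, and type (i) orbits are by definition even in $x_1$ and decreasing in $x_1>0$. Hence every $z\in\omega(u)\setminus\{0\}$ is even in $x_1$ and strictly decreasing in $x_1$ on $\Om_0$, the strictness following from the maximum principle applied to the linear equation for $V_\mu z$. In the case $\ell_\infty>0$, type (i) is excluded (it carries $\La\equiv 0$), so $\omega(u)$ is a connected subset of $E_+$ at the level $\La=\ell_\infty$. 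To show this set reduces to a single equilibrium I would invoke the chain-transitivity of $\omega(u)$ under the limit semiflow together with the strict decrease of $\La$ along any type (iv) connecting orbit (Remark \ref{rmtothm2}(d)): any chain in $\omega(u)$ returning to a given $\psi\in\omega(u)\cap E_+$ cannot traverse a type (iii) or (iv) orbit because of the strict $\La$-drop, nor a type (i) orbit (already excluded); this leaves only constant segments at equilibria of the same $\La$-level and forces $\omega(u)=\{\psi\}$.

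The main obstacle is twofold. First, the constancy $\La\equiv\ell_\infty$ on $\omega(u)$ is subtle because $\La$ is only lower semi-continuous on $C_0(\bar{\Om})$, so one must invoke the Lyapunov property of $\La$ along the (perturbed, non-autonomous) flow and then pass to entire orbits on the invariant limit set, rather than relying on continuity. Second, the chain-recurrence argument in the case $\ell_\infty>0$ is delicate: jumps in $(\ep,T)$-chains may in principle raise $\La$ (since $\La$ is not continuous), so closing the loop requires careful use of the restricted structure of types (i)--(iv) within $\omega(u)$ together with the strict $\La$-drops of Remark \ref{rmtothm2}(d) to rule out non-trivial chains away from $\psi$.
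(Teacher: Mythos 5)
The proposal shares the paper's starting point (Theorem \ref{thm2} applied to entire orbits inside $\omega(u)$, finiteness of $E_+$, and a chain-transitivity/Morse-decomposition-type argument), but it hinges on an intermediate step that is not available and, as stated, is false in the generality considered.

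The pivotal claim is that $\La(u(\cdot,t))$ is asymptotically nonincreasing, so that $\ell_\infty:=\lim_{t\to\infty}\La(u(\cdot,t))$ exists, and that consequently $\La\equiv\ell_\infty$ on $\omega(u)$. There are two separate problems. First, the monotonicity established in Lemma \ref{l:dec} is proved only for entire solutions of \eqref{tdc}, where hypothesis (G2) makes the equation equivariant under reflection. The solution $u$ of \eqref{eq1}, \eqref{bc1} is \emph{not} such a solution: the perturbation $h(x,t)$ is allowed to depend on $x_1$ and to be entirely asymmetric. The comparison argument behind Lemma \ref{l:dec} breaks because $w^\la(x,t) = u(P_\la x,t)-u(x,t)$ no longer satisfies a homogeneous linear parabolic inequality; the reflected equation picks up the extra term $h(P_\la x,t)-h(x,t)$, which has no sign. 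So one cannot conclude that $\La(u(\cdot,t))$ is (even eventually) nonincreasing, and the ``Lyapunov property along the perturbed flow'' you invoke to fix this is precisely what is missing. Indeed, the whole difficulty of Theorem \ref{thm1} — in contrast to the symmetric setting of \cite{p-He:symm} — is that $h$ may push $\La(u(\cdot,t))$ up at intermediate times. Second, even if monotonicity held, lower semicontinuity of $\La$ (which is correct: the sublevel sets $\{\La\le c\}$ are closed) only yields $\La(z)\le\ell_\infty$ for $z\in\omega(u)$; nothing forces the reverse inequality, so constancy does not follow from semicontinuity alone.

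The paper's proof deliberately avoids any claim about $\La$ along $u$ itself. It constructs a flow $S$ on $\omega(u)$ from the entire solutions of the \emph{limit} autonomous problem (where Lemma \ref{l:dec} does apply), shows $\omega(u)$ is chain transitive under $S$ as a consequence of \eqref{oas} and hypothesis (H), and then uses Theorem \ref{thm2} to exhibit a finite Morse decomposition $\{z_1\},\dots,\{z_k\},\{\La=0\}$ of $\omega(u)$ ordered by $\La$-values. Conley's theorem (\cite[Theorem II.7.A]{Conley}) then forces the chain-transitive, connected set $\omega(u)$ to lie in a single Morse set, which is exactly the dichotomy claimed. You should replace the attempted constancy argument by this Morse-theoretic one; note also that once you know $\La\equiv c>0$ on $\omega(u)$, the reduction to a single equilibrium is immediate from Lemma \ref{efi} and connectedness of $\omega(u)$ (every orbit in $\omega(u)$ is then of type (ii) by Theorem \ref{thm2}), so the second chain-recurrence pass you outline is not needed.
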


We next consider entire solutions  of \eqref{eq0}, \eqref{bc0}. We
usually use symbol $U$ for an entire solution. Thus $U$ satisfies (in
the classical sense) the problem
\begin{equation}\label{lim}
\begin{aligned}
    U_t &= \Delta U + f(x, U, \nabla U)\,,&& \qquad (x, t) \in
    \Om\times \R \,,\\ 
   U&=0 ,&& \qquad (x, t) \in \partial\Om \times \R \,.  \\
\end{aligned}
\end{equation} 
Denote 
\begin{align*}
\mathcal{A} := \{U : \ &\text{$U$ is a bounded (non-negative)
 entire  solution of \eqref{lim}} \} \,.
\end{align*}
The equilibria of \eqref{lim} are of course examples of entire solutions
in $\cA$, that is, $E\subset \cA$. 

As in the case of  global solutions of \eqref{eq1}, \eqref{bc1}, 
H\"older estimates from \cite[Proposition 2.7]{P:est}
give the following H\"older estimate for each bounded entire solution
$U$ of \eqref{lim}:
\begin{equation}\label{eq:hol1U}
\sup_{ \substack{x, \bar{x} \in \bar{\Om}, x \neq \bar{x} \\
      t, \bar{t} \in [s, s+1], t\neq \bar{t}\\
      s \in \R}}
\frac{|U(x, t) - U(\bar{x}, \bar{t})|}{|x - \bar{x}|^\alpha + |t -
  \bar{t}|^{\alpha/ 2}} <\infty\,.     
\end{equation}
Hence the orbit $\{U(\cdot, t): t \in \R\}$
is precompact in $C_0(\bar{\Om})$.
Defining the $\alpha$ and $\omega$-limit sets of $U$ by
\begin{align}
\alpha (U) := \bigcap_{t \leq 0} \cl_{C_0(\bar{\Om})} \{U(\cdot, s): s \leq t\} \,,\label{defom}\\
\omega (U) := \bigcap_{t \geq 0} \cl_{C_0(\bar{\Om})} \{U(\cdot, s): s \geq t\} \,,\label{defal}
\end{align}
we obtain by standard results that 
$\alpha(U)$ and $\omega(U)$  are nonempty, compact, connected
sets in $C_0(\bar{\Om})$, which attract the orbit of $U$ is the
following sense:
\begin{equation*}
\lim_{t \to \infty} \dist_{C_0(\bar{\Om})}(U(\cdot, t), \omega(U)) = 0,\quad 
\lim_{t \to -\infty} \dist_{C_0(\bar{\Om})}(U(\cdot, t), \alpha(U)) = 0\,.
\end{equation*}

%%%%%%%%%%%%%
Here is our main result for the entire solutions of \eqref{lim}.
\begin{theorem}\label{thm2}
Assume (D1), (D2), (A), (F1), (F2),  and let $U \in \mathcal{A}$. 
Then exactly one of  the following possibilities occurs:
\begin{enumerate}
\item[(i)] $\La(U(\cdot, t)) = 0$ for each $t \in \R$,
\item[(ii)] $U \in E_+$,
\item[(iii)] $\al(U) = \{\xi_\ast\}$ for some $\xi_\ast \in E_+$ and $\La(z) = 0$ for each $ z\in \omega(U)$ ,
\item[(iv)] $\al(U) = \{\xi_\ast\}$ and \, $\omega(U) = \{\xi^\ast\}$ 
for some $\xi_\ast, \xi^\ast \in E_+$ with $\La(\xi^\ast) < \La(\xi_\ast)$.
\end{enumerate}
If $f(\cdot, 0, 0) \ge 0$, then (i) is the case.
\end{theorem}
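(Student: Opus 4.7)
The plan rests on two properties of the functional $\La$. The first, which I would borrow from the material prepared in Section~\ref{prelim}, is the basic monotonicity: $t\mapsto\La(U(\cdot,t))$ is nonincreasing along any entire solution of \eqref{lim}. Thus the limits
\[
\la_-:=\lim_{t\to-\infty}\La(U(\cdot,t)), \qquad \la_+:=\lim_{t\to+\infty}\La(U(\cdot,t))
\]
are well defined in $[0,\ell]$ with $\la_-\ge\la_+$, and the four cases (i)--(iv) will arise from the four configurations $\la_-=0$, $\la_-=\la_+>0$, $\la_->\la_+=0$ and $\la_->\la_+>0$. The first configuration yields $\La(U(\cdot,t))\equiv 0$ by monotonicity, giving~(i); the other three require the stronger property below.

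The second key input is a \emph{strict Lyapunov} form of the monotonicity: if $U$ is an entire solution of \eqref{lim} with $\La(U(\cdot,t))\equiv\la_0>0$ on an interval $I\subset\R$, then $U$ must be stationary on $\Om\times I$, i.e.\ $U\in E_+$. I would establish this by writing the linear parabolic equation (with bounded coefficients, by~(F1)) satisfied by the reflection difference $V_\mu U$ on each connected component of $\Om_\mu$, applying the strong maximum principle and a Hopf boundary-type lemma for $\mu$ slightly above $\la_0$, and then passing to the limit $\mu\downarrow\la_0$. The constancy of $\La$ at the positive level $\la_0$ forces $V_{\la_0}U\equiv 0$ on $\Om_{\la_0}\times I$, so $U$ is reflectionally symmetric about $H_{\la_0}$ at all times in $I$; applying the same maximum-principle analysis to $U_t$ (which vanishes on $\partial\Om$ and solves a linear parabolic equation) then yields $U_t\equiv 0$. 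This step, in particular the handling of the possibly nonsmooth components of $\Om_\mu$ and the delicate passage $\mu\downarrow\la_0$, is where I expect the main technical difficulty.

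Granting the strict Lyapunov property, the case $\la_-=\la_+>0$ is immediate: $\La(U(\cdot,t))$ is a positive constant, hence $U\in E_+$, which is~(ii). For the remaining configurations I analyze the limit sets $\al(U)$ and $\om(U)$, both nonempty, compact, connected, and invariant in $C_0(\bar\Om)$. The definition of $\La$ together with the continuity of $z\mapsto V_\la z$ on $C_0(\bar\Om)$ shows that $\La$ is lower semicontinuous; combining lsc with the invariance and the strict Lyapunov property through a LaSalle-type argument applied to the infimum of $\La$ on each limit set (which is attained, since lsc functions attain their infimum on compact sets), one concludes that $\al(U)\subset E_+$ and $\La\equiv\la_-$ on $\al(U)$ when $\la_->0$, and symmetrically that $\om(U)\subset E_+$ with $\La\equiv\la_+$ on $\om(U)$ when $\la_+>0$; while if $\la_+=0$, lsc alone forces $\La\equiv 0$ on $\om(U)$.

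To collapse the connected sets $\al(U)$ and (in~(iv)) $\om(U)$ to single equilibria, I would combine the chain-recurrence of $\al$- and $\om$-limit sets with the sharpened Lyapunov inequality announced in Remark~\ref{rmtothm2}(d): along any nontrivial heteroclinic connection between two distinct equilibria in $E_+$, $\La$ strictly decreases. This precludes any nontrivial chain-recurrent structure inside $E_+\cap\{\La=\la_-\}$, yielding $\al(U)=\{\xi_\ast\}$; analogously $\om(U)=\{\xi^\ast\}$ in~(iv), with the strict inequality $\La(\xi^\ast)<\La(\xi_\ast)$ automatic from $\la_+<\la_-$. Finally, if $f(\cdot,0,0)\ge 0$, then by the strong maximum principle any nonnegative equilibrium is either identically zero or strictly positive, and in the latter case the classical moving-plane argument gives strict monotonicity in $x_1>0$, hence $\La=0$. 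Thus $E_+=\emptyset$ and only~(i) can occur.
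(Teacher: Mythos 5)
Your proposal captures the right high-level picture (the functional $\La$ as a Lyapunov-type quantity, a case split based on the asymptotic values of $\La$, and a LaSalle/chain-recurrence argument to collapse the limit sets), but it hinges on a crucial intermediate claim that the paper explicitly states cannot be proved in this generality. You assert a ``strict Lyapunov'' property: if $\La(U(\cdot,t))\equiv\la_0>0$ on an interval, then $U\in E_+$, to be established ``by applying the strong maximum principle and a Hopf boundary-type lemma for $\mu$ slightly above $\la_0$.'' The paper's introduction says precisely the opposite: ``In our more general setting, particularly due to the lack of any smoothness of $\Om$, we cannot prove that $\La$ is a strict Lyapunov functional.'' The obstruction is exactly where you anticipate ``the main technical difficulty'': the boundary of $\Om_\mu$ has corners where the cap $H_\mu\cap\Om$ meets $\partial\Om$, and $\partial\Om$ itself need only satisfy the measure-density condition (A), so no Hopf lemma is available there, and strict positivity of $V_{\la_0}U$ in the open set $\Om_{\la_0}$ does not transfer to $\La(U(\cdot,t))<\la_0$ without a quantitative estimate up to this nonsmooth boundary. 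The paper circumvents this via the maximum principle on small domains (Theorem \ref{imp}) and a perturbation-in-$\la$ argument (Lemma \ref{l:ref-2-moved}), and the actual equilibrium-producing Lemma \ref{cor:equil} has a \emph{different} hypothesis than constancy of $\La$: it requires $V_{\la_0}U(\cdot,t)\equiv 0$ on a fixed connected component of $\Om_{\la_0}$ over a time interval, which is then upgraded to $U\in E_+$ by a unique-continuation argument. Constancy of $\La$ at level $\la_0$ over an interval does not, by itself, force $V_{\la_0}U$ to vanish on any component.

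There is a second, independent gap. You dispose of the case $\la_-=0$ ``by monotonicity,'' and in the remaining cases you tacitly assume that $0\notin\al(U)$ when $\la_->0$ (otherwise your conclusion that $\La\equiv\la_-$ on $\al(U)$ contradicts $\La(0)=0$). But the inequality in Lemma \ref{l:dec} runs the wrong way: it gives $\La(z)\le\la_-$ for $z\in\al(U)$, so $\al(U)=\{0\}$ does \emph{not} obviously imply $\la_-=0$. Ruling out $\la_->0$ when $\al(U)=\{0\}$ is precisely the content of Lemma \ref{nul}, which the paper proves by a nontrivial renormalization $Z=U/\gamma$ (using exponential separation / principal Floquet bundle results from \cite{Huska:nondiv}) followed by an appeal to Theorem \ref{thmsurvey}. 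Your sketch omits this entirely. Finally, once the Lyapunov machinery is repaired, the paper collapses $\al(U)$ and $\om(U)$ to singletons not by chain recurrence inside the limit set but by the much more elementary observation that $E_+$ is finite (Lemma \ref{efi}) together with connectedness of the limit sets; chain recurrence is only needed later, in the proof of Theorem \ref{thm1}.
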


\begin{remark}\label{rmtothm2}
{\rm  
  \quad (a) \ Note that if  (i) holds, then  
   $U$ is symmetric (even) in
  $x_1$. To see this, first recall that (i) means that
  \begin{equation}
    \label{eq:6}
    U(P_0x,t)-U(x,t)\ge 0\quad(x\in   \Om_0,\ t\in \R).
  \end{equation}
  Consider now the  
   function
  $\tilde U:=U(P_0\cdot,\cdot)$. Clearly,
  $\tilde U \in \mathcal{A}$, hence  one 
  of the possibilities (i)-(iv) of
  Theorem \ref{thm2} 
  applies to $\tilde U$; we claim that (i) does. Indeed, if not, then  
  $\al(\tilde U) = \{z\}$ for some $z\in E_+$. But then also
  $\al(U) = \{z\}$, because $z\circ P_0=z$  by
  \cite{P:symm-ell}. However, this contradicts 
  Theorem \ref{thm2} (if (i) holds for $U$, then none of the other
  possibilities can occur). So (i) holds for both $U$ and $\tilde U$,
  which implies that \eqref{eq:6} holds together with the opposite 
  inequality, and therefore $U$ is even in $x_1$. 

\quad (b) \ 
If (iv) holds,  then
  $U$ is a positive heteroclinic solution between two equilibria 
in $E_+$. We do not have an example of an equation where such a
heteroclinic solution occurs. It cannot occur if, for example,
$\Om$ is convex in all variables, for in that case 
$E_+$ contains at most one element (see \cite{P:symm-mult}).  
 On the other hand, it is not difficult
to find examples with a heteroclinic connection 
from an equilibrium in $E_+$ to an equilibrium in $E_0$. 
We sketch an example in dimension $N=1$. Take
 $\Om=(-3\pi,3\pi)$ and let 
$f(u)=u-1$ for $u\le 2$ and $f(u)<0$ for $u\ge 3$. Then 
$E_+$ contains the equilibrium $\xi(x)=1+\cos x$. This equilibrium is
unstable and its fast unstable manifold (an invariant manifold
tangent at $\xi$ to a positive function) contains an entire
solution $U$ monotonically increasing in time and 
such that $U(\codt,t)\to\xi$ as $t\to-\infty$.  
This solution is positive and
bounded (by the condition  $f(u)<0$ for $u\ge 3$)
and its limit   as
$t\to\infty$ is a strictly positive equilibrium, hence an element of
$E_0$. This illustrates that the possibility (iii) can occur.
Possibility (i) or (ii) occurs, for example, if $U$ is an
equilibrium in $E_0$ or $E_+$, respectively.

\quad (c) \  The fact that none of the conditions (ii)-(iv) can hold if 
$f(\cdot, 0, 0) \ge 0$ follows from the strong 
comparison principle: each equilibrium either vanishes identically or
is strictly positive in $\Om$. In particular, $E_+=\emptyset$. 

\quad (d) \  Theorem \ref{thm2} shows that unless  
$U$ is an equilibrium or is 
symmetric, the value of 
 $\La$ on $\om(U)$ is strictly smaller than its value on $\al(U)$. 
In this regard,  $\La$ behaves as a strict Lyapunov functional.
}
\end{remark}

\section{Linear equations and moving hyperplanes}
\label{prelim}
This section has four parts. In Subsections \ref{linear},
\ref{linmove} we recall some useful estimates for solutions of
linear parabolic equations and show how linear equations arise
in the process of moving hyperplanes. In Subsection \ref{basicprop} we
use the estimates for linear equations to derive basic properties
of the  functional $\La$. Finally, in Subsection \ref{symm-nonaut},
we recall two  symmetry results concerning symmetric 
 equations with time-dependent
nonlinearities.

In this section, as in  the whole paper, 
$\Om$ is a fixed domain satisfying conditions (D1), (D2),
and (A).

\subsection{Linear equations}
\label{linear}
We use the following standard notation. For  a bounded
set $G$ in $\R^N$ or $\R^{N+1}$,  $\diam G$ denotes the
diameter of $G$ and $|G|$ for the Lebesgue measure of $G$ (if it is
measurable). By $B(x, r)$ we denote 
the open ball in $\R^N$ centered at $x$ with radius $r$ and 
symbols  $f^{+}$ and $f^{-}$ stand for the positive
and negative parts of a function $f$:  $f^{\pm}:=(|f|\pm f)/2\ge 0$.
For a domain $D \subset \R^N$, 
we define the inner radius of $D$ by
$$
\textrm{inrad}(D) := \sup \{\rho > 0 : B(x_0, \rho) \subset D \textrm{ for some } x_0 \in D\}\,,
$$ 
and if $D$ is an open set, we let $\textrm{inrad}(D)$ stand for the
infimum of inner radii of the connected components of $D$.

For any open bounded $S \subset \R^{n+1}$ and any
bounded, continuous function $f : S \to \R$ define
$$
 [f]_{p, S} := \left(\frac{1}{|S|} \int_{S} |f|^p dx\, dt \right)^{\frac{1}{p}} \qquad (p > 0)\,.
$$

\begin{Definition}{\rm Given an open set
$Q \subset\R^{N + 1}$, and 
 $\beta_0>0$, we say that a
differential operator $L$  belongs to
$\cE(\beta_0, Q)$ if 
\begin{equation*}
L(x,t)= \De +  
\sum_{k = 1}^N b_{k}(x, t) \frac{\partial}{\partial x_k} + c(x, t),
\end{equation*}
where $b_k$, $c$ are measurable functions defined
on $Q$ such that
\begin{alignat*}{2}
|b_k(x,t)|, |c(x,t)| &\leq \beta_0& \qquad &((x, t) \in Q, k =1,\dots,N)\,.
\end{alignat*}
}
\end{Definition}
Given an open set  $G \subset \Om$ and $\tau < T$,  consider the
linear parabolic problem
\begin{align}
v_t &= L(x, t) v + h(x, t), &\qquad &(x, t) \in G \times (\tau, T) \,, \label{prob-lin-e}\\
  v &= 0,  &\qquad &(x, t) \in \partial G  \times (\tau, T) \,,\label{prob-lin-bc}
\end{align}
where $L \in \cE(\beta_0, G \times (\tau, T))$ for some
 $\beta_0 > 0$ and $h \in L^\infty (G \times (\tau, T))$.  
We say  that $v$ is a  supersolution of \eqref{prob-lin-e},
\eqref{prob-lin-bc}  if    
$v \in W^{2,1}_{N + 1, loc}(G \times (\tau, T)) \cap C(\bar{G} \times
[\tau, T))$,  $v\ge 0$ on $\partial G  \times [\tau, T)$,  
and \eqref{prob-lin-e} holds almost
everywhere in $G \times (\tau, T)$ with `=' replaced by `$\geq$'. 
We say $v$ is a subsolution of \eqref{prob-lin-e}, \eqref{prob-lin-bc} if $-v$
is a supersolution and we say $v$ is
a solution of \eqref{prob-lin-e}, \eqref{prob-lin-bc} if it is both supersolution  and subsolution. 

In addition to the standard  maximum  principle, we shall also
use the following estimate (see \cite{Cabre:ABP, Krylov:bk,
  Lieberman:bk, Tso:ABP}). 

\begin{theorem}\label{max:principle}
If $v$ is a supersolution of \eqref{prob-lin-e}, \eqref{prob-lin-bc}, then 
$$
 \|v^-(\cdot, t) \|_{L^\infty(G)} \leq   C^\ast (\|v^-(\cdot, \tau) \|_{L^\infty(G)}  + \|h^{-}\|_{L^{\infty}(G \times (\tau, t))}
  ) \qquad (t \in (\tau, T))\,, 
$$ where $C^\ast = C^\ast(N,   {\beta_0}, T - \tau)$ is a positive
constant. 
\end{theorem}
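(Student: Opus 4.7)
The plan is to construct an explicit time-dependent barrier dominating $v^-$ and then apply the comparison principle for strong supersolutions of non-divergence parabolic operators. For a fixed $t\in(\tau,T)$, set $K:=\|v^-(\cdot,\tau)\|_{L^\infty(G)}$ and $H:=\|h^-\|_{L^\infty(G\times(\tau,t))}$, and let $\phi$ solve the scalar linear ODE $\phi'(s)=\beta_0\phi(s)+H$ with $\phi(\tau)=K$, that is,
\[
\phi(s)=Ke^{\beta_0(s-\tau)}+\frac{H}{\beta_0}\bigl(e^{\beta_0(s-\tau)}-1\bigr)\ge 0.
\]
Because $|c|\le\beta_0$ and $\phi\ge 0$, one has $c\phi\le\beta_0\phi$; because $-h\le H$ on $G\times(\tau,t)$, a short computation yields
\[
\partial_s(v+\phi)-L(v+\phi)\ge h+\phi'-c\phi\ge h+(\beta_0\phi+H)-\beta_0\phi\ge 0
\]
almost everywhere on $G\times(\tau,t)$. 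On the parabolic boundary one has $v(\cdot,\tau)+\phi(\tau)\ge-v^-(\cdot,\tau)+K\ge 0$ and $v+\phi\ge\phi\ge 0$ on $\partial G\times[\tau,t)$.

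The key step is to deduce $v+\phi\ge 0$ on $G\times(\tau,t)$ from these one-sided inequalities, and this is where the main obstacle lies. Since $v$ is only assumed to belong to $W^{2,1}_{N+1,\textrm{loc}}$, the classical $C^2$-based maximum principle is unavailable; instead one invokes the Aleksandrov--Bakelman--Pucci type comparison principle for strong supersolutions of linear non-divergence parabolic operators with bounded measurable coefficients, as developed in the cited references \cite{Cabre:ABP,Krylov:bk,Lieberman:bk,Tso:ABP}. To bring the operator into the form required by the ABP comparison (zero-order coefficient nonpositive), I would first perform the exponential substitution $w:=e^{-\beta_0(s-\tau)}(v+\phi)$, which replaces $c$ by $c-\beta_0\le 0$ while preserving both the supersolution property and the nonnegativity of the parabolic boundary data. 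The ABP-based comparison then gives $w\ge 0$, hence $v\ge-\phi$ on $G\times(\tau,t)$.

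Evaluating at time $t$ produces
\[
\|v^-(\cdot,t)\|_{L^\infty(G)}\le\phi(t)\le C^*(K+H),
\]
with $C^*:=\max\bigl(e^{\beta_0(T-\tau)},(e^{\beta_0(T-\tau)}-1)/\beta_0\bigr)$, which is the stated bound. The dependence of $C^*$ on the dimension $N$ is inherited from the constant in the ABP estimate invoked in the comparison step; in fact the same argument delivers the sharper bound in which the $L^\infty$ norm of $h^-$ is replaced by its $L^{N+1}(G\times(\tau,t))$ norm, which is the usual form of the parabolic ABP inequality in the cited works.
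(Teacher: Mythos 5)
Your barrier construction is correct and gives a clean, self-contained proof. The paper itself does not prove Theorem~\ref{max:principle}; it simply cites \cite{Cabre:ABP, Krylov:bk, Lieberman:bk, Tso:ABP} for this standard maximum-principle estimate, so there is no internal proof to compare against. Your argument---constructing the time-dependent barrier $\phi$, checking that $v+\phi$ is a supersolution of the homogeneous problem, reducing to $c\le 0$ via the exponential substitution $w=e^{-\beta_0(s-\tau)}(v+\phi)$, and invoking the Aleksandrov--Bakelman--Pucci comparison principle for $W^{2,1}_{N+1,\textrm{loc}}\cap C(\bar G\times[\tau,T))$ supersolutions---is exactly the standard route to such estimates and all the computations check out.

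Two small inaccuracies in your closing remarks are worth flagging. First, the comparison step you invoke is the \emph{qualitative} consequence of ABP (namely $w\ge 0$), which carries no constant; your explicit barrier therefore yields $C^\ast=C^\ast(\beta_0,T-\tau)$ with no $N$-dependence at all. Claiming that the $N$-dependence ``is inherited from the ABP estimate'' is misleading---the theorem merely \emph{permits} $C^\ast$ to depend on $N$, and your proof shows it need not. Second, the assertion that ``the same argument delivers'' the sharper bound with $\|h^-\|_{L^{N+1}}$ in place of $\|h^-\|_{L^\infty}$ is not correct: the barrier $\phi$ is built from the pointwise bound $-h\le H=\|h^-\|_{L^\infty}$, and this device simply does not see $L^{N+1}$ information. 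To get the $L^{N+1}$ version one must apply the \emph{quantitative} ABP estimate directly to $v$ rather than pass through a comparison with a scalar barrier. Neither of these remarks affects the validity of the proof of the statement as given.
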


The following result is the maximum principle on small domains. For the proof see 
\cite{Foldes:bd, P:est}. 

\begin{lemma} \label{zac}
There exists $\delta = \delta (N,   \beta_0)$ such that 
if $|G| < \delta$ and $v$ is a super-solution of  \eqref{prob-lin-e}, \eqref{prob-lin-bc} with $h \equiv 0$, then 
\begin{equation*}
\| v^{-}(\cdot, t)\|_{L^\infty(G)} \leq 
2 e^{-(t - \tau)} \| v^{-}(\cdot, \tau)\|_{L^\infty(G)} \qquad (t \in (\tau, T)) \,. 
\end{equation*}
\end{lemma}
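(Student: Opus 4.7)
The strategy is to reduce the estimate to a decay bound for a linear Dirichlet problem on $G$, and then extract that decay from the smallness of $|G|$ through the Faber-Krahn inequality and an $L^{2}$--$L^{\infty}$ smoothing step.

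I first compare $v$ with a nonnegative solution of the linearized problem. Setting $M:=\|v^{-}(\cdot,\tau)\|_{L^{\infty}(G)}$, let $W$ solve
\[
W_{t}=LW \text{ in } G\times(\tau,T), \qquad W=0 \text{ on } \partial G\times[\tau,T], \qquad W(\cdot,\tau)=v^{-}(\cdot,\tau).
\]
The substitution $\widetilde W=e^{-\beta_{0}(t-\tau)}W$ converts this into an equation with nonpositive zero-order coefficient, so the classical maximum principle gives $W\ge 0$. Since $v+W$ is then a supersolution of $\partial_{t}\cdot=L\cdot$ that is nonnegative on the parabolic boundary, Theorem~\ref{max:principle} (with $h\equiv 0$) yields $v+W\ge 0$, and hence $v^{-}\le W$. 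It suffices to prove $\|W(\cdot,t)\|_{L^{\infty}(G)}\le 2e^{-(t-\tau)}M$ for $|G|$ small.

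Next I establish $L^{2}$ decay. Multiplying the equation for $W$ by $W$, integrating by parts in $G$ (using $W\in H^{1}_{0}(G)$ by standard parabolic regularity), and absorbing the drift with Young's inequality gives
\[
\frac{d}{dt}\|W(\cdot,t)\|_{L^{2}(G)}^{2} \le -\bigl(\lambda_{1}(G) - \beta_{0}^{2} - 2\beta_{0}\bigr)\|W(\cdot,t)\|_{L^{2}(G)}^{2},
\]
where $\lambda_{1}(G)$ is the first Dirichlet eigenvalue of $-\Delta$ on $G$. The Faber-Krahn inequality $\lambda_{1}(G)\ge c_{N}|G|^{-2/N}$ then lets us make the rate $\mu:=\tfrac{1}{2}(\lambda_{1}(G)-\beta_{0}^{2}-2\beta_{0})$ as large as we wish by shrinking $|G|$; Gronwall yields $\|W(\cdot,t)\|_{L^{2}(G)}\le |G|^{1/2}e^{-\mu(t-\tau)}M$.

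It remains to upgrade this to an $L^{\infty}$ bound. For $t\ge\tau+1$, the Dirichlet semigroup of $L$ on $G$ is ultracontractive with $\|T_{1}\|_{L^{2}(G)\to L^{\infty}(G)}\le C(N,\beta_{0})$, a consequence of Gaussian upper bounds for the Dirichlet heat kernel of $-\Delta$ on any open set (by domain monotonicity from $\R^{N}$) together with the Feynman-Kac representation to absorb the bounded lower-order terms; hence
\[
\|W(\cdot,t)\|_{L^{\infty}(G)}\le C\,\|W(\cdot,t-1)\|_{L^{2}(G)}\le C|G|^{1/2}e^{-\mu(t-1-\tau)}M,
\]
which does not exceed $2e^{-(t-\tau)}M$ once $|G|$ is small enough that both $\mu\ge 1$ and $Ce^{\mu}|G|^{1/2}\le 2$ hold. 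For $t\in(\tau,\tau+1]$, the plain bound $\|W(\cdot,t)\|_{L^{\infty}(G)}\le e^{\beta_{0}(t-\tau)}M$ from the classical maximum principle applied to $\widetilde W$ already gives $\le 2e^{-(t-\tau)}M$ on $t-\tau\le(\log 2)/(1+\beta_{0})$, and the remaining intermediate subinterval is handled by repeating the $L^{2}$--$L^{\infty}$ argument with a shorter time step. The delicate ingredient is precisely the ultracontractivity on the possibly irregular open set $G$: since no boundary regularity is assumed, one must invoke killed-Brownian-motion (Feynman-Kac) heat-kernel bounds or, equivalently, Moser iteration in the weak formulation.
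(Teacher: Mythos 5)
The paper does not prove this lemma itself; it refers to \cite{Foldes:bd, P:est}, where the argument runs through the Alexandrov--Bakelman--Pucci--Krylov--Tso maximum principle (one compares $v^-$ with a simple exponential barrier and uses the parabolic ABP estimate to absorb the lower-order terms, exploiting the smallness of $|G\times(\tau,\tau+1)|$). Your route is genuinely different: you reduce to a nonnegative comparison function $W$, prove $L^2$ decay through the Poincar\'e inequality combined with Faber--Krahn to make the first Dirichlet eigenvalue of $G$ large, and then upgrade to $L^\infty$ by ultracontractivity of the Dirichlet semigroup. This is a clean conceptual alternative and its ingredients are available on arbitrary bounded open sets, so the scheme is sound.

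However, the final numerical matching contains a real slip. You require ``$|G|$ small enough that both $\mu\ge 1$ and $Ce^{\mu}|G|^{1/2}\le 2$ hold.'' But by Faber--Krahn, $\mu\sim c_N|G|^{-2/N}$, so $e^{\mu}|G|^{1/2}\to\infty$ as $|G|\to 0$; the second condition as written is never satisfiable, and a reader following your argument literally would conclude the proof breaks down. The fix is to evaluate the inequality $C|G|^{1/2}e^{-\mu(t-1-\tau)}\le 2e^{-(t-\tau)}$ at its worst case: for $\mu\ge 1$ the left side decays faster than the right for $t>\tau+1$, so the binding constraint is $t=\tau+1$, which requires only $C e\,|G|^{1/2}\le 2$ (the factor $e^{\mu}$ cancels against $e^{\mu-1}$). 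That corrected condition is easily met for $|G|$ small, and then the choice of $\delta$ depends only on $N$ and $\beta_0$ through $C$ and the Faber--Krahn constant, as needed. A second, softer caveat: your $W$ is only a weak ($H^1_0$) solution on a possibly quite irregular $G$, so it need not lie in $C(\bar G\times[\tau,T))\cap W^{2,1}_{N+1,\mathrm{loc}}$, which is the class in which Theorem \ref{max:principle} is stated; the comparison $v^-\le W$ is best carried out entirely in the weak framework (e.g.\ by testing the inequality for $v+W$ with $(v+W)^-$), or by working directly with $v^-$ as a subsolution via Kato's inequality and dispensing with $W$ altogether. Neither issue is conceptual, but the $e^{\mu}$ miscalculation must be corrected for the proof to close.
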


For the proof of the next theorem see \cite{P:est}.

\begin{theorem}\label{imp}
Given any $\rho > 0$, $d > 0$, and $\theta > 0$, there 
exist positive constants 
$\delta = \delta(N,\diam \Om, \beta_0, \rho)$, $p = p(N, \diam \Om,
\beta_0, d, \theta, \rho)$, 
and  
$\tilde{\mu} = \tilde{\mu}(N, \diam \Om, \beta_0, d, \theta, \rho)$ 
with the following properties. If  
$D \subset G \subset \Om$ are open sets satisfying
\begin{equation*}
\textrm{inrad}(D) > \rho, \quad |G \setminus \bar{D}| < \delta, \quad
\dist (\bar{D}, \partial G) \geq d \,, 
\end{equation*}
if $v$ is a super-solution of  \eqref{prob-lin-e}, \eqref{prob-lin-bc} with $T = \infty$ and
$f \equiv 0$, and if  
\begin{equation}\label{amu}
\begin{aligned}
v(x, t) &> 0 \quad ((x, t) \in \bar{D} \times (\tau, \tau + 8\theta)), \\
\|v^-(\cdot, \tau)\|_{L^\infty(G \setminus \bar{D})} &\leq \tilde{\mu}
[v]_{p, D_0 \times (\tau + \theta, \tau + 2\theta)}\,, 
\end{aligned}
\end{equation}
for each connected component $D_0$ of $D$, then the following
statements hold true: 
\begin{align}\label{pvd}
v(x, t) &> 0 \qquad ((x, t) \in \bar{D} \times [\tau, \infty))\,, \\ 
\label{uvd}
\|v^{-}(\cdot, t)\|_{L^\infty (G)} 
&\leq 2 e^{-(t - \tau)} \|v^{-}(\cdot, \tau)\|_{L^\infty (G)}\quad (t > \tau)\,. 
\end{align}
\end{theorem}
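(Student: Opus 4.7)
The plan is to combine three ingredients in a continuation argument: a parabolic weak Harnack inequality propagated across $\bar D$ by Harnack chains, the small-domain maximum principle (Lemma \ref{zac}) applied to $v$ on $G\setminus\bar D$, and an iteration in time that re-verifies hypothesis \eqref{amu} at progressively later starting times. Together these should extend the positivity of $v$ on $\bar D$ to all of $[\tau,\infty)$ and then yield the exponential decay statement \eqref{uvd}.

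The core analytic step is a quantitative lower bound for $v$ on $\bar D_0$ in terms of the $L^p$ quantity in \eqref{amu}. Since \eqref{amu} makes $v$ a nonnegative supersolution on $D_0\times(\tau,\tau+8\theta)$, the Krylov--Safonov parabolic weak Harnack estimate applies on any interior ball $B(x_0,\rho)\subset D_0$ (such a ball exists by $\mathrm{inrad}(D_0)>\rho$), yielding a pointwise lower bound for $v$ at one interior space-time point in terms of $[v]_{p,D_0\times(\tau+\theta,\tau+2\theta)}$. The lower bound is then propagated to all of $\bar D_0$ by a standard parabolic Harnack chain whose links stay at distance at least $d/2$ from $\partial G$, where the hypothesis $\dist(\bar D,\partial G)\ge d$ is what guarantees room for such a chain. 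The outcome is constants $c_1>0$ and $p>0$, depending only on $N,\diam\Om,\beta_0,d,\theta,\rho$, such that
\begin{equation*}
\inf_{\bar D_0 \times [\tau+4\theta,\,\tau+8\theta]} v \;\ge\; c_1\,[v]_{p,\,D_0 \times (\tau+\theta,\tau+2\theta)}.
\end{equation*}
Combined with \eqref{amu} and the choice $\tilde\mu=c_1/4$ (possibly shrunk further below), this gives $v\ge 4m_0$ on $\bar D\times[\tau+4\theta,\tau+8\theta]$, where $m_0:=\|v^-(\cdot,\tau)\|_{L^\infty(G\setminus\bar D)}$.

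For the complementary estimate, hypothesis \eqref{amu} gives $v\ge 0$ on $\partial D\times[\tau,\tau+8\theta]$ and $v=0$ on $\partial G$, so Lemma \ref{zac} applied to $v$ on $(G\setminus\bar D)\times(\tau,\tau+8\theta)$ (with $\delta$ as in that lemma, made smaller if necessary) produces the bound $\|v^-(\cdot,t)\|_{L^\infty(G\setminus\bar D)}\le 2 e^{-(t-\tau)} m_0$ on the same interval. At the shifted starting time $\tau_1:=\tau+4\theta$ we then have both the pointwise lower bound $v\ge 4m_0$ on $\bar D$ persisting over $[\tau_1,\tau_1+4\theta]$ and the improved estimate $\|v^-(\cdot,\tau_1)\|_{L^\infty(G\setminus\bar D)}\le 2 e^{-4\theta} m_0$. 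Restricting the weak-Harnack bound to a sub-cylinder gives $[v]_{p,D_0\times(\tau_1+\theta,\tau_1+2\theta)}\ge 4 m_0$, so the ratio in \eqref{amu} at $\tau_1$ does not exceed $2e^{-4\theta}/c_1$, which is bounded by $\tilde\mu$ provided $\tilde\mu$ was fixed small enough from the outset. Once \eqref{amu} is re-verified at $\tau_1$, the same argument applies with $\tau$ replaced by $\tau_1$ and iterates indefinitely, yielding \eqref{pvd}; the conclusion \eqref{uvd} is then immediate from one final application of Lemma \ref{zac} on $(G\setminus\bar D)\times(\tau,\infty)$, since $v^-\equiv 0$ on $\bar D$.

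The main technical obstacle I expect is the first step: assembling the Harnack chain with constants depending only on the stated parameters $N,\diam\Om,\beta_0,d,\theta,\rho$ and not on the specific geometry of the component $D_0$, and keeping every chain cylinder inside $G$ so that $v$ is a genuine nonnegative supersolution where the chain lives. A related subtlety is choosing $\tilde\mu$ small enough that the iteration closes uniformly in the step index, so that the ratio $\|v^-(\cdot,\tau_k)\|_{L^\infty(G\setminus\bar D)}/[v]_{p,D_0\times(\tau_k+\theta,\tau_k+2\theta)}$ does not drift above $\tilde\mu$ as $k\to\infty$; the exponential gain in Lemma \ref{zac} should make the scheme self-improving once the initial constants are correctly balanced.
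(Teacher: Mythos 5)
The paper does not prove Theorem~\ref{imp} internally; it simply cites~\cite{P:est}, so there is no in-paper argument to compare against directly. Evaluating your sketch on its own terms, however, I see two genuine gaps that keep it from closing.

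First, your Harnack-chain step cannot reach all of $\bar D_0$, in particular $\partial D_0$. The weak Harnack inequality and the chaining argument require the function to be a \emph{nonnegative} supersolution on every cylinder in the chain, but you only know $v\ge 0$ on $\bar D\times(\tau,\tau+8\theta)$; on $G\setminus\bar D$ the function may be negative. Since $\partial D_0\subset\partial D$, any chain that produces a lower bound at a boundary point of $D_0$ must pass through cylinders that protrude into $G\setminus\bar D$, where nonnegativity is not available. You flag this as ``the main technical obstacle'' but the remedy you suggest --- keeping the chain at distance $\ge d/2$ from $\partial G$ --- solves the wrong problem: the issue is not $\partial G$ but $\partial D$. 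What is actually needed here is a \emph{perturbed} Harnack estimate (e.g., applying the weak Harnack inequality on a slightly larger set $D''\supset\bar D$ to the shifted function $v+c\,\|v^-(\cdot,\tau)\|_{L^\infty(G\setminus\bar D)}$, absorbing the resulting zero-order inhomogeneity of size $\beta_0 \|v^-\|$), and it is precisely to pay for this perturbation that the smallness hypothesis $\|v^-(\cdot,\tau)\|\le\tilde\mu\,[v]_{p,\dots}$ must enter the Harnack step. Your sketch uses that hypothesis only for bookkeeping, not where it is indispensable.

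Second, the time-iteration does not close. To re-apply the argument at $\tau_1=\tau+4\theta$ you need the \emph{hypothesis}~\eqref{amu} at $\tau_1$, whose first line demands $v>0$ on $\bar D\times(\tau_1,\tau_1+8\theta)=(\tau+4\theta,\tau+12\theta)$. But the Harnack step only produced positivity on $\bar D\times[\tau+4\theta,\tau+8\theta]$, so the required positivity on $(\tau+8\theta,\tau+12\theta)$ is exactly what you would still need to prove --- the iteration is circular. In addition, even ignoring that, the quantitative ratio you compute does not stay below $\tilde\mu$ uniformly: after one step the new ratio is comparable to $e^{-4\theta}/2$, which must be $\le\tilde\mu=c_1/4$, i.e.\ $c_1\ge 2e^{-4\theta}$. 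There is no reason the Harnack constant $c_1$ (which degrades with the length of the chain, $\diam\Om/\rho$) should dominate $2e^{-4\theta}$, and in fact for small $\theta$ this inequality fails ($c_1\to 1$ while $2e^{-4\theta}\to 2$). A workable alternative is a continuation (``open--closed'') argument: set $T^*:=\sup\{T>\tau+8\theta: v>0\text{ on }\bar D\times(\tau,T)\}$, suppose $T^*<\infty$, and apply the perturbed Harnack estimate on a cylinder ending at $T^*$, using the exponentially decayed bound on $\|v^-\|$ from Lemma~\ref{zac} (valid up to $T^*$), to derive a strictly positive lower bound for $v$ on $\bar D$ at $T^*$ --- a contradiction. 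That avoids having to re-verify~\eqref{amu} at later times entirely.
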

Notice that out of the quantities $d$, $\theta$, and $\rho$, 
the constant  $\delta$ depends only by $\rho$, whereas 
the constants $p$ and $\tilde \mu$ also depend on 
$d$ and~$\theta$.

\subsection{From nonlinear to linear  equations}
\label{linmove}
Let us now recall how linear problems of the form \eqref{prob-lin-e}, \eqref{prob-lin-bc}
arise in the process of moving hyperplanes. Below we 
need to apply this process to problem \eqref{lim} as well as to 
a transformation  of \eqref{lim} which is no longer time-autonomous. 
Therefore, we introduce   the following more general problem:

\begin{equation}\label{tdc}
 \begin{alignedat}{2} 
U_t &= \Delta U + g(t, x, U, \nabla U),& \qquad &(x, t) \in \Om
\times \R\,, \\  
U &= 0, & \qquad &(x, t) \in \partial\Om \times \R \,. 
\end{alignedat}
\end{equation}

We assume that the function   $g:(t,x,u,p)\mapsto g(t,x,u,p):
\R\times \bar \Om\times [0,\infty)\times \R^{N}\to\R$ 
satisfies the following two conditions similar to (F1), (F2)
((G1) requires slightly less regularity than (F1)): 

\begin{itemize}

\item[(G1)] (\textit{Regularity}) $g$ is continuous in all variables and 
  Lipschitz  
  in  $(u, p)$:  there is $\beta_0 >0$ such that
\begin{multline*}
    \sup_{x\in \bar{\Om}} |g(t,x, u,p)-g(t,x,\tilde u,\tilde p)|\le
    \beta_0 |(u,p)-(\tilde u,\tilde p)|\qquad \\ 
    (t\in\R,\ x\in\bar\Om, (u,p),(\tilde u,\tilde p)\in  [0,\infty)
    \times \R^N)\,. 
  \end{multline*}
\item[(G2)] (\textit{Symmetry}) $g$ is independent of $x_1$ and even
  in $p_1$.
\end{itemize}
Also we assume the following boundedness condition
\begin{itemize}
\item[(G3)] The function $g(t,x,0,0)$ is bounded on $\R\times \Om$. 
\end{itemize}
Condition (G3) guarantees that  the H\"older estimate 
\eqref{eq:hol1U}  applies to any bounded 
entire solution $U$ of \eqref{tdc}
(see \cite[Proposition 2.7]{P:est}). Consequently, the trajectory of
$U$ and its $\al$ and $\om$-limit sets  have the properties
discussed in Section \ref{main} (see the paragraph containing
 \eqref{defom}, \eqref{defal}).

Denote 
\begin{equation*}
\mathcal{A}^\ast := \{U : \textrm{$U$ is a bounded
 nonnegative entire solution of \eqref{tdc}}\}\,.
\end{equation*}

Given $U \in \mathcal{A}^\ast$ and $\la\in [0,\ell)$, 
 define $U^\la : \bar{\Om}_\la \times \R \to \R$ 
by $U^\la(x, t) := U(P_\la x, t)$.  
By (G2)
\begin{equation*}
\partial_t U^\la = \Delta U^\la + g(t, x, U^\la, \nabla U^\la), 
\quad (x, t) \in \Om_\la \times \R\,.
\end{equation*}
Hence, the function $w^\la : \bar{\Om}_\la \times \R \to \R$,  
\begin{equation} \label{dfw}
w^\la(x, t) := U^\la(x, t) - U(x, t) 
\end{equation}
satisfies
\begin{equation}\label{ref2}
  \partial_t w^{\la} = \Delta w^\la  + g(t, x, U^{\la}, \nabla U^{\la})   - g(t, x, U, \nabla U), \qquad (x, t) \in \Om_{\la} \times \R.
\end{equation}
Using the Hadamard formula, we can rewrite \eqref{ref2}
as 
\begin{equation}\label{ref}
\begin{aligned}
  \partial_t w^{\la} &= L^\la(x, t) w^\la, \qquad (x, t) \in \Om_{\la} \times \R\,,
\end{aligned}
\end{equation}
where $L^{\la} \in \cE(  \beta_0, \Om_\la
\times \R)$, with 
 $\be_0$ as in (G1).

Also, since $U \geq 0$ in $\Om$, $w^\la$ satisfies 
\begin{equation}\label{bcd}
w^{\la} (x, t)  \geq 0 \qquad ((x, t) \in \partial \Om_\la \times \R) \,.
\end{equation}
Hence, $w^\la$ is a supersolution of \eqref{prob-lin-e}, \eqref{prob-lin-bc}, with $G=\Om_\la$
and $L=L^\la$.

We shall also encounter different 
linear equations associated with \eqref{tdc}. 
For example, if $U$, $\tilde U$ are two solutions of 
\eqref{tdc}, then $w=U-\tilde U$
 is a  solution of a linear
problem 
\eqref{prob-lin-e}, \eqref{prob-lin-bc} on $\Om \times \R$ with $L\in \mathcal{E}(\be_0, \Om
\times \R)$.

\subsection{Basic properties of the functional $\La$}
\label{basicprop}
We now use the estimates from 
Subsection \ref{linear} to examine the behavior of the
functional $\La$ along entire solutions of the nonautonomous problem 
\eqref{tdc}. 

We assume that the nonlinearity  $g$  satisfies conditions (G1)-(G3). Of
course, all the results proved here apply to the bounded entire
solutions of the more specific problem
\eqref{lim}.

\begin{lemma}\label{zcc}
There is $\varepsilon_0 > 0$ 
such that for any $U \in \mathcal{A}^*$ one has
\begin{equation*}
  \La(U(\cdot, t)) < \ell -
\varepsilon_0\quad (t\in\R).
\end{equation*} 
\end{lemma}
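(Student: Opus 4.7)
The plan is to use the small-domain maximum principle (Lemma \ref{zac}) applied to the difference $w^\la = U^\la - U$ introduced in Subsection \ref{linmove}. The crucial geometric observation is that $\Om_\ell = \emptyset$: indeed, $\Om$ is open, so if $\Om \cap H_\ell$ were nonempty it would force $\Om \cap H_{\la}\neq \emptyset$ for some $\la > \ell$, contradicting the definition of $\ell$. By monotone convergence of the sets $\Om_\la \downarrow \emptyset$ as $\la\uparrow \ell$, we have $|\Om_\la| \to 0$ as $\la \uparrow \ell$.

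First I would fix the constant $\delta = \delta(N, \beta_0)$ from Lemma \ref{zac}, where $\beta_0$ is the Lipschitz constant supplied by (G1) (crucially, $\beta_0$ is independent of $U$). Using the observation above, I choose $\varepsilon_0 \in (0, \ell/2)$ small enough that $|\Om_\la| < \delta$ for every $\la \in [\ell - 2\varepsilon_0, \ell)$. Now fix any $U \in \mathcal{A}^\ast$ and any such $\la$. As recalled in Subsection \ref{linmove}, the function $w^\la = U^\la - U$ satisfies the linear equation \eqref{ref} on $\Om_\la \times \R$ with an operator $L^\la \in \cE(\beta_0, \Om_\la\times \R)$, and \eqref{bcd} gives $w^\la \ge 0$ on $\partial \Om_\la \times \R$. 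Thus $w^\la$ is a supersolution of \eqref{prob-lin-e}, \eqref{prob-lin-bc} with $h \equiv 0$ and $G = \Om_\la$, on any time interval $(\tau, t)$.

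Next I would exploit the entireness of $U$. Set $M := 2\sup_{\Om\times \R}|U| < \infty$; then $\|w^{\la,-}(\cdot,\tau)\|_{L^\infty(\Om_\la)} \le M$ uniformly in $\tau$. Since $|\Om_\la| < \delta$, Lemma \ref{zac} yields
\begin{equation*}
\|w^{\la,-}(\cdot, t)\|_{L^\infty(\Om_\la)} \le 2 e^{-(t-\tau)} M \qquad (\tau < t).
\end{equation*}
Fixing $t \in \R$ and sending $\tau \to -\infty$ forces $w^{\la,-}(\cdot,t) \equiv 0$, i.e., $V_\la U(\cdot,t) = w^\la(\cdot,t) \ge 0$ on $\Om_\la$. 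Since this holds for every $\la \in [\ell - 2\varepsilon_0,\ell)$ and every $t \in \R$, the definition of $\La$ gives $\La(U(\cdot,t)) \le \ell - 2\varepsilon_0 < \ell - \varepsilon_0$, which is the desired inequality.

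There is no real obstacle here; the only subtlety is that $\varepsilon_0$ must be chosen once and for all, independently of $U \in \mathcal{A}^\ast$. This is ensured by the fact that the constant $\beta_0$ from (G1) (and hence $\delta$ from Lemma \ref{zac}) depends only on $g$, and the choice $|\Om_\la| < \delta$ for $\la \ge \ell - 2\varepsilon_0$ is purely geometric, depending only on $\Om$ and $\delta$.
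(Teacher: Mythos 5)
Your proof is correct and follows the same route as the paper's: fix $\delta$ from the small-domain maximum principle (Lemma \ref{zac}), shrink $\varepsilon_0$ so $|\Om_\la|<\delta$ near $\la=\ell$, apply Lemma \ref{zac} to $w^\la=V_\la U$, and let $\tau\to-\infty$ using boundedness of $U$. Your use of $2\varepsilon_0$ to secure the strict inequality in the conclusion is a slightly more careful bookkeeping than the paper's write-up, but the argument is otherwise identical.
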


\begin{proof}
Take $\delta > 0$ as in Lemma \ref{zac} and fix $\varepsilon_0 > 0$ such that $|\Om_{\la}| < \delta$ for any 
$\la \in (\ell - \varepsilon_0, \ell)$. Let $w^\la$ be defined as in \eqref{dfw}. Then by Lemma \ref{zac} one has
\begin{multline*}
\|(w^\la)^-(\cdot, t)\|_{L^\infty(\Om_\la)} \leq 2 e^{-(t - \tau)}
\|(w^\la)^-(\cdot, \tau)\|_{L^\infty(\Om_\la)} \\ 
(\tau \leq t, \la \in (\ell - \varepsilon_0, \ell)) \,.
\end{multline*}
Taking  $\tau \to -\infty$ and using the 
boundedness of $U$, we obtain  $w^\la (\cdot, t) \geq 0$ for any
$\la \in (\ell - \varepsilon_0, \ell)$, which proves the lemma. 
\end{proof}

\begin{lemma}\label{l:dec}
For any $U \in \mathcal{A}^*$, the function $t \mapsto \La(U(\cdot, t))$
is nonincreasing and 
\begin{align}
  &\La(z)\le \lim_{t \to\infty}\La(U(\codt,t))\quad(z\in
  \om(U)),\label{limos}\\
&\La(z)\le \lim_{t \to-\infty}\La(U(\codt,t))\quad(z\in \al(U)).\label{limoa}
\end{align}
\end{lemma}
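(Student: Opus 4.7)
The plan is to establish monotonicity by reducing to the linear parabolic problem for $w^\mu = V_\mu U$, and then to deduce \eqref{limos} and \eqref{limoa} by a short semicontinuity-type argument directly from the definition of $\La$ as an infimum.

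For monotonicity, I would fix $\tau \in \R$ and set $\la_0 := \La(U(\cdot,\tau))$. By the definition of $\La$, $w^\mu(\cdot,\tau) = V_\mu U(\cdot,\tau) \ge 0$ on $\Om_\mu$ for every $\mu > \la_0$. By the reduction recalled in Subsection \ref{linmove}, $w^\mu$ is a supersolution of the linear problem \eqref{prob-lin-e}, \eqref{prob-lin-bc} on $\Om_\mu \times (\tau,\infty)$ with $L^\mu \in \cE(\beta_0, \Om_\mu \times \R)$ and $h \equiv 0$, while the boundary inequality $w^\mu \ge 0$ on $\partial \Om_\mu \times \R$ is exactly \eqref{bcd}. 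Since $(w^\mu)^-(\cdot,\tau) = 0$, Theorem \ref{max:principle} applied with vanishing initial data and $h \equiv 0$ gives $(w^\mu)^-(\cdot, t) = 0$ for all $t \ge \tau$, i.e. $V_\mu U(\cdot, t) \ge 0$ on $\Om_\mu$. As this holds for every $\mu > \la_0$, I conclude $\La(U(\cdot, t)) \le \la_0 = \La(U(\cdot, \tau))$ for $t \ge \tau$.

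For the limit inequalities, monotonicity together with the obvious bound $\La \in [0,\ell]$ guarantees that the limits $L_\omega := \lim_{t\to\infty} \La(U(\cdot,t))$ and $L_\alpha := \lim_{t\to-\infty} \La(U(\cdot,t))$ exist. I treat \eqref{limos} in detail; \eqref{limoa} is identical. Let $z \in \om(U)$ and pick $t_n \to \infty$ with $U(\cdot, t_n) \to z$ uniformly on $\bar\Om$. Suppose for contradiction that $\La(z) > L_\omega$, and choose $\la$ strictly between $L_\omega$ and $\La(z)$. By the definition of $\La(z)$ as an infimum there exist $\mu > \la$ and $x_0 \in \Om_\mu$ with $V_\mu z(x_0) < 0$. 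For all sufficiently large $n$, $\La(U(\cdot, t_n)) < \la < \mu$, which forces $V_\mu U(\cdot, t_n) \ge 0$ on $\Om_\mu$. Uniform convergence then yields $V_\mu z(x_0) \ge 0$, a contradiction. The $\al$-limit case uses in the same way that $\La(U(\cdot, t_n)) \le L_\alpha$ for every $t_n$, which is immediate from the monotonicity of $t \mapsto \La(U(\cdot,t))$.

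I expect the only real technical content to sit in the monotonicity step, and even there the work has essentially been done in Subsection \ref{linmove}: one just has to check that Theorem \ref{max:principle} applies to $(w^\mu)^-$ with vanishing initial and right-hand-side data. The subtle point in the limit inequalities is the strict-versus-nonstrict handling of $\mu$ in the infimum defining $\La$, which is the reason I insert the auxiliary $\la$ strictly between the relevant limit and $\La(z)$: this produces a definite gap so that $\mu > \la > \La(U(\cdot, t_n))$ can be enforced for large $n$, and the implication $\mu > \La(U(\cdot, t_n)) \Rightarrow V_\mu U(\cdot, t_n) \ge 0$ follows cleanly from the definition of $\La$.
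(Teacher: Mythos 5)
Your proof is correct and follows essentially the same route as the paper's: the monotonicity step is exactly the paper's application of the maximum principle to the supersolution $w^\mu$ on $\Om_\mu\times(\tau,\infty)$, and the limit inequalities are obtained by the same uniform-convergence/semicontinuity argument based on the definition of $\La$ as an infimum. The only cosmetic difference is that you argue \eqref{limos} and \eqref{limoa} by contradiction with an auxiliary $\la$ strictly between the two relevant values (which neatly sidesteps the boundary case $\mu=\La(U(\cdot,t_n))$), whereas the paper gives the equivalent direct statements; the substance is the same.
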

\begin{proof}
Given any $\tau \in \R$, denote $\la_0 := \La(U(\cdot, \tau))$ and fix
an arbitrary $\la \in (\la_0, \ell)$. If $w^\la$ is as in \eqref{dfw}, 
then $w^\la$ satisfies \eqref{ref}, \eqref{bcd}
and, by our choice of $\la$, it also satisfies
\begin{equation}\label{pos}
w^\la (x, \tau) \geq 0  \quad (x \in \Om_\la).
\end{equation}
By the maximum principle, $w^\la(\cdot, t) \geq 0$ for each $t\geq \tau$. 
Since $\la \in (\la_0, \ell)$ 
was arbitrary, we obtain that 
$\La(U(\cdot, t)) \leq \la_0$ for any $t \geq \tau$ and  
the monotonicity property follows. 

The monotonicity implies that
relation \eqref{limos} is equivalent to 
\begin{equation}\label{equivlimos}
  \La(z)\le \La(U(\codt,t))\quad(z\in
  \om(U),\ t\in \R).
\end{equation}
To prove \eqref{equivlimos}, fix $t\in\R$, $z\in
  \om(U)$,  and  $\la\ge \La(U(\codt,t))$. 
Then, for some sequence $(t_n)_{n \in \N}$ with $t_n>t$, $t_n\to \infty$,
 we have $U(\codt,t_n)\to z$. Consequently,
 $V_\la z = \lim_{n \to \infty} V_\la U(\codt,t_n)\ge 0$ on $\Om_\la$,
 since $\La(U(\codt,t_n))\le \La(U(\codt,t))\le \la$. This proves
 \eqref{equivlimos}. 

 Now fix an arbitrary $z\in \al(U)$. If $\La(z)=0$, then the relation in  
\eqref{limoa} holds trivially. Assume $\La(z)>0$. By the definition of
$\La$, arbitrarily close to   
$\La(z)$ there is $\la<\La(z)$ such that 
$V_{\la}z(x_{\la}) < 0$ for some  $x_{\la} \in \Om_{\la}$. 
Since $z\in \als(U)$,  by a choice of a large negative 
$\tau$ we can make  $U(\codt, \tau)$ so close to   
$z$  that $V_{\la} U(x_{\la}, \tau) < 0$. Consequently, $\La (U(\cdot,
\tau))>\la$ and by the monotonicity $\La (U(\cdot, t))>\la$ for each $t \leq \tau$. 
This proves that $\La(z)\le  \lim_{t \to-\infty}\La(U(\codt,t))$, as desired. 
\end{proof}

\begin{lemma}\label{l:ref-2-moved}
Let $U \in \mathcal{A}^*$ and $\la_0\in (0,\ell)$. Then   
the following two statements are valid:
\begin{itemize}
\item[(i)] If for some $\tau_0\in \R$ one has
$\La(U(\cdot, \tau_0))\le \la_0$  and
$V_{\la_0}U(\codt,\tau_0)\not\equiv 0$ on 
each connected component   of $\Om_{\la_0}$, then  there exists
$\varepsilon_0 > 0$ such that $\La(z) \leq \la_0 - \varepsilon_0$ for
each $z \in \omega(U)$. 
\item[(ii)] If for some $z\in \al(U)$ one has 
  $V_{\la}z> 0$ on $\Om_{\la}$ for each $\la\in [\la_0,\ell)$, 
then  there exists
$\varepsilon_0 > 0$ such that $\La(U(\cdot, t)) 
\leq \la_0 - \varepsilon_0$ for
each $t\in\R$. 
\end{itemize}
\end{lemma}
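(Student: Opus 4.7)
The proof of both parts rests on applying the Harnack-type estimate of Theorem \ref{imp} to the function $w^\la = V_\la U$, which, as shown in Subsection \ref{linmove}, is a supersolution of a linear parabolic problem on $\Om_\la \times \R$ with $w^\la \ge 0$ on $\partial \Om_\la$. In both parts we produce a compact core $\bar D \ssubset \Om_{\la_0}$ on which $w^\la$ is uniformly strictly positive over a time window of length $8\theta$, while $\|(w^\la)^-\|_{L^\infty(\Om_\la \setminus \bar D)}$ at the starting time is small compared to that lower bound; Theorem \ref{imp} then delivers exponential decay of $\|(w^\la)^-(\cdot, t)\|_{L^\infty(\Om_\la)}$. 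The two parts differ only in how this decay is turned into the required bound on $\La$.

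For part (i), Lemma \ref{l:dec} and the hypothesis $\La(U(\cdot, \tau_0)) \le \la_0$ give $w^{\la_0} \ge 0$ on $\Om_{\la_0} \times [\tau_0, \infty)$; since $w^{\la_0}(\cdot, \tau_0) \not\equiv 0$ on any connected component of $\Om_{\la_0}$, the strong maximum principle sharpens this to $w^{\la_0} > 0$ strictly on $\Om_{\la_0}$ for $t > \tau_0$. Fix $\theta, \rho, d > 0$, choose an open $D \ssubset \Om_{\la_0}$ with $\textrm{inrad}(D) > \rho$, $\dist(\bar D, \partial \Om_{\la_0}) \ge 2d$, and $|\Om_{\la_0} \setminus \bar D| < \delta/2$ (with $\delta$ from Theorem \ref{imp}), and obtain $\min_{\bar D \times [\tau_0 + T, \tau_0 + T + 8\theta]} w^{\la_0} \ge 2\sigma > 0$ for some $T, \sigma > 0$. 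The H\"older estimate \eqref{eq:hol1U} yields $\|w^\la - w^{\la_0}\|_{L^\infty(\Om_{\la_0} \times \R)} \le C|\la_0 - \la|^\alpha$ and $\|w^\la\|_{L^\infty((\Om_\la \setminus \Om_{\la_0}) \times \R)} \le C|\la_0 - \la|^\alpha$ for some $C > 0$. Hence, for $\la \in [\la_0 - \varepsilon_0, \la_0]$ with $\varepsilon_0$ small, $w^\la \ge \sigma$ on $\bar D \times [\tau_0 + T, \tau_0 + T + 8\theta]$ and $\|(w^\la)^-(\cdot, \tau_0 + T)\|_{L^\infty(\Om_\la)} \le C|\la_0 - \la|^\alpha$; hypothesis \eqref{amu} is satisfied, and conclusion \eqref{uvd} gives $\|(w^\la)^-(\cdot, t)\|_{L^\infty(\Om_\la)} \to 0$ as $t \to \infty$. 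Any $z \in \omega(U)$, being a limit of $U(\cdot, t_n)$ with $t_n \to \infty$, therefore satisfies $V_\la z \ge 0$ on $\Om_\la$ for all $\la \in [\la_0 - \varepsilon_0, \la_0]$; combined with $V_\mu z \ge 0$ for $\mu > \la_0$ (inherited from Lemma \ref{l:dec}), this produces $\La(z) \le \la_0 - \varepsilon_0$.

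For part (ii), the strong maximum principle step of (i) is replaced by the strict inequality in the hypothesis, which directly furnishes $V_{\la_0} z \ge 2\sigma > 0$ on some compact $\bar D \ssubset \Om_{\la_0}$ and, by continuity, $V_\mu z \ge \sigma$ on $\bar D$ for $\mu$ in a neighborhood of $\la_0$. Fix $\tau_n \to -\infty$ with $U(\cdot, \tau_n) \to z$. The H\"older continuity of $z$ inherited from \eqref{eq:hol1U} gives $V_\mu z \ge V_{\la_0} z - C|\la_0 - \mu|^\alpha$ on $\Om_{\la_0}$ and $|V_\mu z| \le C|\la_0 - \mu|^\alpha$ on the strip $\Om_\mu \setminus \Om_{\la_0}$, whence $\|(V_\mu z)^-\|_{L^\infty(\Om_\mu)} \le C|\la_0 - \mu|^\alpha$ for $\mu \in [\la_0 - \varepsilon_0, \la_0]$; for $\mu \ge \la_0$ this quantity vanishes outright. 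By Lemma \ref{zcc}, $V_\mu U(\cdot, t) \ge 0$ automatically when $\mu$ is close to $\ell$, so only a compact range of $\mu$ remains. For each such $\mu$ and $\tau_n$ sufficiently negative, hypothesis \eqref{amu} of Theorem \ref{imp} is met at $\tau = \tau_n$ with $v = w^\mu$, and \eqref{uvd} reads
\begin{equation*}
\|(w^\mu)^-(\cdot, s)\|_{L^\infty(\Om_\mu)} \le 2 e^{-(s - \tau_n)} \bigl( C|\la_0 - \mu|^\alpha + 2 \|U(\cdot, \tau_n) - z\|_{L^\infty(\Om)} \bigr)
\end{equation*}
for every fixed $s \in \R$. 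Letting $n \to \infty$, the exponential factor collapses to $0$ while the parenthesized quantity stays bounded; therefore $V_\mu U(\cdot, s) \ge 0$ on $\Om_\mu$. Combining with Lemma \ref{zcc}, the inequality holds for every $\mu > \la_0 - \varepsilon_0$, so $\La(U(\cdot, s)) \le \la_0 - \varepsilon_0$ for each $s \in \R$.

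The main technical obstacle is the uniform verification of hypothesis \eqref{amu} as $\mu$ varies, notably the strip-based continuity estimate for $(V_\mu z)^-$ across the moving boundary. The essential new ingredient in (ii), absent from (i), is to read the exponential decay in \eqref{uvd} in the opposite temporal direction: fix $s$ and let $\tau_n \to -\infty$, so that the factor $e^{-(s - \tau_n)}$ annihilates the bounded starting error and yields \emph{pointwise} nonnegativity of $w^\mu(\cdot, s)$ at every $s \in \R$, rather than only an $\omega$-limit statement as in (i).
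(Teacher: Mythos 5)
Your proof follows essentially the same strategy as the paper's: both rest on Theorem \ref{imp} applied to $w^\la = V_\la U$, using the H\"older estimate \eqref{eq:hol1U} to control the error introduced by perturbing $\la$ below $\la_0$, and both exploit the boundedness of $w^\la$ in part (ii) to read the exponential decay \eqref{uvd} in the backward direction (fix $s$, send $\tau = -t_n \to -\infty$). The order in which $\tau$, $\theta$, $\sigma$ are fixed in part (i) differs cosmetically from the paper's, but the logic is identical. One point in part (ii) is glossed over: after Theorem \ref{imp} gives $w^\mu(\cdot,s) \ge 0$ for $\mu \in [\la_0 - \varepsilon_0, \la_0]$, you still need $w^\mu(\cdot,s) \ge 0$ for $\mu \in (\la_0, \ell)$ to conclude $\La(U(\cdot,s)) \le \la_0 - \varepsilon_0$. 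Lemma \ref{zcc} handles $\mu$ near $\ell$, but for the intermediate range your fixed $D \subset \Om_{\la_0}$ is not contained in $\Om_\mu$, so one cannot apply Theorem \ref{imp} to $w^\mu$ with that $D$. The paper closes this by observing the hypothesis of (ii) holds with any $\tilde\la_0 \in (\la_0,\ell)$ in place of $\la_0$, and re-running the argument with a new $D_{\tilde\la_0}$; your "compact range" remark points in that direction but is not spelled out. That fix is easy and does not require any uniformity, so this is an omitted step rather than a defect in the approach.
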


\begin{proof}  
In this proof we apply Theorem \ref{imp} in much the same way as in
\cite[Proof of Lemma 4.3]{P:est}.

 The proofs of (i) and (ii) use
similar arguments. The following is a common part to both proofs.  
By (D2), $\Om_{\la_0}$ has finitely many connected components, and therefore 
 $\rho :=  \textrm{inrad} (\Om_{{\lambda_0}})/2 > 0$. To
this $\rho$ (and $\be_0$ as in (G1)), there is
$\delta > 0$ as in Theorem \ref{imp}.  
Fix an open set $D$ such that $\bar{D}\subset \Om_{{\lambda_0}}$, 
$D \cap M$ is a domain for each connected component $M$ of $\Om_{{\lambda_0}}$,
$\textrm{inrad} (D) > \rho$, and $|\Om_{{\lambda_0}} \setminus D| <
\delta/{2}$. Then for sufficiently small $\varepsilon_0 > 0$ one has 
$|\Om_{\la} \setminus D| < \delta$ for each $\la \in (\la_0 -
\varepsilon_0, \la_0)$. Below we  assume that $\varepsilon_0>0$ has this
property, but we may need to make it even smaller.
Denote $d := \dist (D, \partial \Om_{{\lambda_0}})$ and observe  that 
$d\leq \dist (D, \partial \Om_{\la})$ for each 
$\la \in (\la_0 - \varepsilon_0, \la_0)$.

For  any $\la\ge 0$, let $w^{\la}$ be as in \eqref{dfw}. 
Recall that $w^{\la}$ satisfies \eqref{ref}
 \eqref{bcd}. 

We now prove statement (i). Since $\la_0\ge \La(U(\codt,\tau_0))$,
we have
\begin{equation}\label{pos0}
w^{\la_0} (x, \tau_0) \geq 0  \quad (x \in \Om_{\la_0}).
\end{equation}
The maximum principle and the assumption in
(i) therefore imply 
that  $w^{\la_0} > 0$ on $\Om_{\la_0} \times (\tau_0, \infty)$. 
Fix any $\tau > \tau_0$ and denote
\begin{equation*}
 r_1 := \frac{1}{2} \inf_{\bar{D}} w^{\la_0}(\cdot, \tau) > 0 \,.
\end{equation*}
Then, by  continuity, choosing a sufficiently small $\theta > 0$
and making  $\varepsilon_0 > 0$ smaller if necessary, we achieve that 
\begin{equation}\label{rela1}
\inf_{\bar{D}} w^\la(\cdot, t) > r_1 \qquad (t \in [\tau, \tau +
8\theta], \la \in [\la_0 - \varepsilon_0, \la_0])\,. 
\end{equation}
Having fixed $d$, $\rho$, $\theta$, let $p$ 
and $\tilde{\mu}$  be as in Theorem \ref{imp}.

By \eqref{pos0} and the H\"older estimate \eqref{eq:hol1U}, one has 
\begin{align*}
w^\la(x, \tau) &= w^{\la_0}(x, \tau) + u(P_\la x, t) - u(P_{\la_0} x, t) \geq 
- C |P_\la x - P_{\la_0} x|^{\alpha} \\
&= -2C|\la_0 - \la|^\alpha \qquad (x \in \Omega_{\la_0})
\end{align*}
and 
\begin{equation*}
w^\la(x, \tau) \geq -C |P_{\la}x - x|^\alpha \geq -2C|\la_0 - \la|^\alpha \qquad (x \in \Om_{\la}\setminus \Omega_{\la_0})\,.
\end{equation*}
Thus decreasing $\varepsilon_0>0$ further if
needed, one achieves 
\begin{equation}\label{rela2}
\|(w^\la)^-(\cdot, \tau)\|_{L^\infty(\Om_\la)} < \tilde{\mu} r_1
\qquad (\la \in [\la_0 - \varepsilon_0, \la_0]) \,.
\end{equation}
Relations \eqref{rela1} and \eqref{rela2} show that the second
inequality in  \eqref{amu} is satisfied with $v= w^\la$ and
$G=\Om_\la$. Consequently, \eqref{pvd} and \eqref{uvd} hold true. By
\eqref{uvd} we obtain 
\begin{multline}\label{nnw}
\|(w^\la)^-(\cdot, t)\|_{L^\infty(\Om_\la)}
\leq 2 e^{-(t - \tau)} \|(w^\la)^-(\cdot, \tau)\|_{L^\infty (G)}  \\\qquad
 (t > \tau, \ \la \in (\la_0 - \varepsilon_0, \la_0])\,,
\end{multline}
and therefore, by passing to the limit as $t \to \infty$,
$V_\la z \geq 0$ in $\Om_{\la}$ for each  
$\la \in (\la_0 - \varepsilon_0, \la_0]$ and each $z \in \omega(U)$.
Combining this result with Lemma \ref{l:dec}, we conclude that 
$\La(z)\le \la_0 - \varepsilon_0$ for each $z \in
\omega(U)$. Statement (i) is proved.

Next we prove statement (ii). 
Choose a sequence $(t_n)_{n \in \N}$
such that $t_n \to \infty$ and $U(\cdot, -t_n)  \to z$ as $n \to \infty$.
Denote 
\begin{equation*}
 r_1 := \frac{1}{2} \inf_{\bar{D}} V_{\la_0}z > 0 \,.
\end{equation*}
Then, by \eqref{eq:hol1U}, there are $\theta$ and $n_0$ such that, possibly
after  $\varepsilon_0 > 0$ is made smaller, one has 
\begin{equation*}
\inf_{\bar{D}} w^\la(\cdot, t) \geq r_1 \qquad (t \in [-t_n, -t_n + 8\theta), \la \in (\la_0 - \varepsilon_0, \la_0], n \geq n_0)\,.
\end{equation*}
Having fixed $\theta$ (and  $d$, $\rho$), let $p$ 
and $\tilde{\mu}$  be as in Theorem \ref{imp}.

We can now argue as  in the
 proof of statement (i), taking $\tau=-t_n$, with  $n\ge n_0$,
 in the arguments following \eqref{rela1}. Specifically,
we first  make $\varepsilon_0 > 0$ yet smaller (independently of $n$) to
 achieve that \eqref{rela2} holds. This implies that \eqref{nnw} holds
 with  $\tau=-t_n$.     
Since $w^\la$ is bounded, passing to the limit as $n \to \infty$, we obtain 
$w^\la(\cdot, t) \geq 0$ in $\Om_{\la}$ for each $t\in\R$ and 
 $\la \in (\la_0 - \varepsilon_0, \la_0]$. In particular, 
 $w^{\la_0}(\cdot, t) \geq 0$ in $\Om_{\la_0}$. Clearly,  in 
 view of the assumption of (ii), we can
 replace $\la_0$ with any other value  $\tilde \la_0\in (\la_0,\ell)$,
 hence   $w^\la(\cdot, t) \geq 0$ in $\Om_{\la}$ for all 
    $t\in\R$ and $\la \in (\la_0 - \varepsilon_0, \ell]$.
This proves that $\La(U(\cdot,t))\le \la_0 - \varepsilon_0$
for each $t\in\R$. 
\end{proof}

\subsection{Two symmetry results for nonautonomous equations}
\label{symm-nonaut}
In this subsection we state two symmetry results from \cite{P:est,
  P:symm-survey}, as they apply to entire solutions of \eqref{tdc}.
The results in \cite{P:est,  P:symm-survey} 
concern  fully nonlinear equations of 
which \eqref{tdc} (hence also \eqref{lim}) is a special case. 
We will use these results in the proof of Theorem \ref{thm2}.

We assume that $g$ is a function satisfying conditions (G1)-(G3). 

\begin{Theorem}\label{thmest}
Let $U\in \cA^*$ and
\begin{equation*}
  \la^*=\sup \{\La(z): z\in    \om(U)\}.
\end{equation*}
Then $\la^* \in [0,\ell)$ and 
for each $z\in \om (U)$ one has $V_{\la^*}z\equiv 0$ on some
connected component of $\Om_{\la^*}$.  If
$\om(u)$ contains a function $z_0$ such that $z_0>0$ in $\Om$, then 
$\la^* =0$.
\end{Theorem}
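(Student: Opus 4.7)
The plan is to prove the three claims in turn: $\la^*\in[0,\ell)$; the vanishing of $V_{\la^*}z$ on some connected component of $\Om_{\la^*}$ for each $z\in\om(U)$; and $\la^*=0$ under the additional positivity hypothesis.

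For the range, I would combine Lemma \ref{zcc} with \eqref{limos}: the former gives $\La(U(\cdot,t))\le\ell-\varepsilon_0$ for all $t$, and the latter transfers this bound to each $z\in\om(U)$, so $\la^*\le\ell-\varepsilon_0<\ell$.

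For the main vanishing assertion I would argue by contradiction via a time-shift/compactness procedure that produces a bounded entire solution of a limit equation whose entire trajectory sits inside $\om(U)$. Concretely, suppose some $z_0\in\om(U)$ has $V_{\la^*}z_0\not\equiv 0$ on every connected component of $\Om_{\la^*}$; since $\La(z_0)\le\la^*$ already gives $V_{\la^*}z_0\ge 0$, the assumption says $V_{\la^*}z_0>0$ at some point of each component. After a maximizing reduction (discussed below), I arrange $\La(z_0)=\la^*$, pick $t_n\to\infty$ with $U(\cdot,t_n)\to z_0$, form the time-shifts $U_n:=U(\cdot,\cdot+t_n)$, and use the H\"older estimate \eqref{eq:hol1U} together with Arzel\`a--Ascoli to extract a locally uniform subsequential limit $\tilde U$, a bounded entire solution of \eqref{tdc} with some nonlinearity $\tilde g$ still in the class (G1)--(G3) with the same constants, satisfying $\tilde U(\cdot,0)=z_0$. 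Because $s+t_n\to\infty$ for every $s\in\R$, the whole orbit of $\tilde U$ lies in $\om(U)$; hence $\La(\tilde U(\cdot,s))\le\la^*$ for all $s$. Combined with the nonincrease of $\La$ along $\tilde U$ (Lemma \ref{l:dec}) and the equality $\La(z_0)=\la^*$, this pins $\La(\tilde U(\cdot,s))=\la^*$ for every $s\le 0$. I now apply Lemma \ref{l:ref-2-moved}(i) to $\tilde U$ with $\la_0=\la^*$ and $\tau_0=-\sigma$ for small $\sigma>0$: the first hypothesis holds because $\tilde U(\cdot,-\sigma)\in\om(U)$, while the second follows by continuity of $\tilde U$ in $t$ (openness of the condition that $V_{\la^*}$ be positive at some interior point of each component). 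The conclusion produces $\varepsilon_0>0$ with $\La(\zeta)\le\la^*-\varepsilon_0$ for every $\zeta\in\om(\tilde U)$; the exponential bound \eqref{nnw} from its proof, together with $\om$-limit attraction for $\tilde U$ and monotonicity of $\La$, forces $\La(\tilde U(\cdot,s))<\la^*$ for all sufficiently large $s$, contradicting the backward equality $\La(\tilde U(\cdot,s))=\la^*$ for $s\le 0$ combined with the nonincrease of $\La$ along $\tilde U$ (which would otherwise require $\La(\tilde U(\cdot,0))=\la^*$ to be preserved under a further small forward shift).

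The hardest part is the maximizing reduction and the passage from a bound on $\om(\tilde U)$ to a genuine contradiction with $\la^*=\sup_{\om(U)}\La$. Since $\La$ is only lower semi-continuous on $C_0(\bar\Om)$, the supremum $\la^*$ need not be attained on the compact set $\om(U)$; handling this requires either an attainment result (typically via the nonincrease of $\La$ along $U$, the $\om$-limit attraction \eqref{oas}, and the quantitative estimates of Subsection \ref{linear}), or a careful approximation scheme in which the equality $\La(z_0)=\la^*$ is replaced by a close approximation and the exponential decay \eqref{nnw} is tracked with enough precision to preserve the strict drop.

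For the last assertion, suppose some $z_0\in\om(U)$ satisfies $z_0>0$ in $\Om$. By the vanishing conclusion there is a connected component $M$ of $\Om_{\la^*}$ with $V_{\la^*}z_0\equiv 0$ on $M$, hence, by continuity of $z_0\in C_0(\bar\Om)$, also on $\bar M$. If $\la^*>0$, then because $M$ is nonempty, bounded, and opens into $\Om_{\la^*}$, there is a point $x^*\in\partial M\cap\partial\Om$ with $x_1^*>\la^*$. By (D1), the one-dimensional slice of $\Om$ through $x^*$ is the symmetric interval $(-x_1^*,x_1^*)\times\{x'\}$; since $0<\la^*<x_1^*$, the reflected point $P_{\la^*}x^*=(2\la^*-x_1^*,x')$ has first coordinate strictly between $-x_1^*$ and $x_1^*$ and therefore lies in $\Om$, giving $z_0(P_{\la^*}x^*)>0$, while $z_0(x^*)=0$. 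Hence $V_{\la^*}z_0(x^*)>0$, contradicting $V_{\la^*}z_0\equiv 0$ on $\bar M\ni x^*$. Therefore $\la^*=0$.
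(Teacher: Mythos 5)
The paper itself does not prove this theorem: the vanishing conclusion and the upper bound $\la^*<\ell$ are recalled from \cite[Theorem~2.4]{P:est}, and the statement that $\la^*=0$ when $\omega(U)$ contains a strictly positive function is cited from \cite[Theorem~2.2]{P:est}; the paragraph following the theorem merely checks that $\sup_{\omega(U)}\La$ agrees with the characterization used there. Your proposal is therefore a from-scratch attempt. Two of its three pieces are fine: the bound $\la^*\le\ell-\varepsilon_0$ via Lemma~\ref{zcc} and \eqref{limos} is correct, and the deduction of $\la^*=0$ from the vanishing conclusion plus strict positivity of $z_0$ is a clean geometric argument using (D1) (one should make sure the boundary point $x^*\in\partial M\cap\partial\Om$ with $x_1^*>\la^*$ has a nondegenerate slice, but the idea is right).

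The central vanishing assertion, however, has gaps beyond the one you flag. You correctly note that $\La$ is only lower semicontinuous, so $\sup_{\omega(U)}\La$ need not be attained; but your contradiction starts by choosing a $z_0\in\omega(U)$ that simultaneously violates the vanishing condition and attains the supremum, and there is no reason one element should do both even if each kind of element exists separately. More seriously, the contradiction at the end does not materialize. Applying Lemma~\ref{l:ref-2-moved}(i) to $\tilde U$ gives $\La(\zeta)\le\la^*-\varepsilon_0$ for $\zeta\in\omega(\tilde U)$, but $\omega(\tilde U)$ is merely a subset of $\omega(U)$, so this is not in conflict with $\sup_{\omega(U)}\La=\la^*$. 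The claimed bridge---that \eqref{nnw} forces $\La(\tilde U(\cdot,s))<\la^*$ at finite $s$---fails: \eqref{nnw} makes $\|(w^\la)^-(\cdot,s)\|$ decay exponentially but does not make it vanish at any finite time, so $\La(\tilde U(\cdot,s))$ can equal $\la^*$ for every finite $s$. And even granting $\La(\tilde U(\cdot,s))<\la^*$ for large $s$, that is entirely compatible with $\La(\tilde U(\cdot,s))=\la^*$ for $s\le 0$ and nonincrease of $\La$; no contradiction results. A correct proof needs the quantitative argument of \cite[Theorem~2.4]{P:est}, in the spirit of the Harnack-type estimate used in Appendix~A for Theorem~\ref{thmsurvey}, which converts near-vanishing of $V_{\la^*}U(\cdot,t)$ on a component at large times into exact vanishing of $V_{\la^*}z$ for each $z\in\omega(U)$.
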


 The  existence of 
$\la^* \in [0,\ell)$ satisfying the first conclusion 
is proved in \cite[Theorem 2.4]{P:est}. As shown there
 (see the beginning of the proof of Lemma~4.2 in
\cite{P:est}), 
$\la^*$ is given by
\begin{equation*}
  \la^*=\inf\{\mu\ge 0: \  V_\la(z)\ge 0\text{ in $\Om_\la$ for all } 
\la\in (\mu,\ell) \textrm{ and } z\in    \om(U)\}
\end{equation*}
which is the same as 
\begin{equation*}
   \la^*=\inf\{\mu\ge 0: \  \La(z)\le \mu \ \ (z\in    \om(U))\}=\sup \{\La(z): z\in    \om(U)\}.
\end{equation*} 
The fact that
$\la^* =0$ if $\om(u)$ contains a strictly positive function is 
proved in \cite[Theorem 2.2]{P:est}. In this case, 
each $z\in \om (U)$ is even in $x_1$ and monotone nonincreasing
in  $x_1>0$.

\begin{Theorem}
  \label{thmsurvey} Let $U\in \cA^*$. 
If $\al(U)$ contains a function $z_0$ such that $z_0>0$ in $\Om$, then 
for each $t\in \R$ one has $\La(U(\codt,t))=0$  and  the function
$U(\codt,t)$ is even in $x_1$ and decreasing in $x_1$.
\end{Theorem}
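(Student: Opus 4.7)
My plan is to apply Lemma \ref{l:ref-2-moved}(ii) iteratively with the strictly positive $z_0\in\al(U)$ to force $\La(U(\codt,t))=0$ for every $t$, and then derive evenness by the same scheme applied to the reflected solution. The monotonicity of $t\mapsto\La(U(\codt,t))$ from Lemma \ref{l:dec}, combined with Lemma \ref{zcc}, gives $\la_\infty:=\lim_{t\to-\infty}\La(U(\codt,t))\in[0,\ell)$ and $\La(U(\codt,t))\le\la_\infty$ for each $t$, so it suffices to prove $\la_\infty=0$. Lemma \ref{l:ref-2-moved}(ii), applied with $z=z_0$, would yield $\La(U(\codt,t))\le\la_0-\varep_0$ for any $\la_0>0$ and hence $\la_\infty=0$, provided its hypothesis $V_\la z_0>0$ on $\Om_\la$ can be verified for every $\la\in[\la_0,\ell)$.

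The verification proceeds in two stages. First, set $\la_\al^*:=\sup\{\La(z):z\in\al(U)\}$ and suppose for contradiction $\la_\al^*>0$. Via the lower semicontinuity of $\La$ on $C_0(\bar\Om)$, the compactness of $\al(U)$, and the contrapositive of Lemma \ref{l:ref-2-moved}(ii) combined with a subsequential limit argument and the parabolic strong maximum principle for $w^\la=V_\la U$ (which solves \eqref{ref}), one obtains an analog of Theorem \ref{thmest} for the $\al$-limit set: for every $z\in\al(U)$ the function $V_{\la_\al^*}z$ vanishes identically on some connected component $M$ of $\Om_{\la_\al^*}$. Taking $z=z_0$ and letting $x_\infty\in\bar M$ maximize $x_1$, the openness of $M$ forces $x_\infty\in\partial\Om$ with $x_1^\infty>\la_\al^*$, and the $x_1$-convexity and $H_0$-symmetry of $\Om$ place $P_{\la_\al^*}x_\infty$ in the interior of $\Om$, so $z_0(P_{\la_\al^*}x_\infty)>0$. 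On the other hand, $V_{\la_\al^*}z_0(x_\infty)=0$ by continuity from $M$, and together with $z_0(x_\infty)=0$ this forces $z_0(P_{\la_\al^*}x_\infty)=0$, a contradiction. Hence $\la_\al^*=0$ and $V_\la z_0\ge 0$ on $\Om_\la$ for every $\la>0$. Repeating the same geometric obstruction rules out identical vanishing of $V_\la z_0$ on any connected component, and a further application of the parabolic strong maximum principle to $V_\la\tilde U$, where $\tilde U\in\cA^*$ is an entire limit solution extracted from $U(\codt,\codt+t_n)$ and satisfies $\tilde U(\codt,0)=z_0$, upgrades the nonnegativity of $V_\la z_0$ to strict positivity. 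Lemma \ref{l:ref-2-moved}(ii) then yields $\La(U(\codt,t))=0$ for every $t\in\R$.

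Evenness and strict decrease follow by an ancillary argument. Applying the just-proved conclusion to $\tilde U(x,t):=U(P_0 x,t)$, which by (G2) satisfies \eqref{tdc} with the same $g$ and has $\al(\tilde U)\ni z_0\circ P_0>0$ in $\Om$, yields $\La(\tilde U(\codt,t))=0$, equivalently $V_0\tilde U(\codt,t)=-V_0 U(\codt,t)\ge 0$ on $\Om_0$; combined with $V_0 U(\codt,t)\ge 0$ this forces $V_0 U(\codt,t)\equiv 0$, so $U(\codt,t)$ is even in $x_1$. Strict monotonicity in $x_1>0$ then follows from the parabolic strong maximum principle applied to $w^\la=V_\la U$, which is nonnegative by the first conclusion and nontrivial since $U\not\equiv 0$. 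The main obstacle is Step~2, the $\al$-limit analog of Theorem \ref{thmest}: working backwards in time, Lemma \ref{l:dec} only provides the upper bound $\la_\al^*\le\la_\infty$ without automatic realization of $\la_\infty$ inside $\al(U)$, so the forward-time argument underlying Theorem \ref{thmest} must be carefully re-engineered through Lemma \ref{l:ref-2-moved}(ii) and subsequential extraction, coupled with strong-maximum-principle applications to $w^\la$ on cylinders of the form $\Om_\la\times(-\infty,\tau]$.
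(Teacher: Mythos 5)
Your proposal correctly identifies the relevant tools (Lemmas \ref{l:dec}, \ref{zcc}, \ref{l:ref-2-moved}(ii)) and the final evenness/monotonicity arguments are sound. However, there is a genuine gap at the heart of your argument, which you correctly flag yourself: the ``$\al$-limit analog of Theorem~\ref{thmest}'' asserting that $V_{\la_\al^*}z$ vanishes identically on a connected component of $\Om_{\la_\al^*}$ for every $z\in\al(U)$ whenever $\la_\al^*>0$. You propose to derive this from the contrapositive of Lemma~\ref{l:ref-2-moved}(ii) plus subsequential extraction, but that contrapositive only gives, for each $z\in\al(U)$, a single $\la\in[\la_0,\ell)$ and a single point where $V_\la z\le 0$; it falls far short of identical vanishing on a component at the critical level $\la_\al^*$. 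Transporting the proof of Theorem~\ref{thmest} (from \cite{P:est}) from the $\om$-limit set to the $\al$-limit set is not a cosmetic change: the parabolic flow is not time-reversible, and the forward-in-time localization and exponential-decay mechanism underlying that theorem does not apply on cylinders $\Om_\la\times(-\infty,\tau]$ without a different argument.

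Without this first stage your plan does not close: to apply Lemma~\ref{l:ref-2-moved}(ii) with $z=z_0$ you need $V_\la z_0>0$ in $\Om_\la$, and your upgrading step via the strong maximum principle applied to $V_\la\tilde U$ (where $\tilde U\in\cA^*$ with $\tilde U(\cdot,0)=z_0$) requires $V_\la\tilde U\ge 0$ on $\Om_\la\times(-\infty,0]$. That nonnegativity for negative times is equivalent to $\La(\tilde U(\cdot,t))\le\la$ for all $t<0$, which is exactly what the unproven first stage was supposed to provide --- a circularity. The paper avoids it entirely: rather than proving strict positivity of $V_\la z_0$, it takes $z_1\in\al(U)$ obtained by pushing the approximating sequence one time unit forward (so $U(\cdot,-t_n+1)\to z_1$ after extracting a subsequence) and deduces $V_\la z_1>0$ in each connected component $G$ of $\Om_\la$ from a one-step Harnack-type estimate (\cite[Lemma~3.5]{P:est}, quoted as \eqref{eq:har}) applied to $w^\la=V_\la U$ on $G\times(-t_n,-t_n+1+\theta)$. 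The only input this requires is $V_\la z_0\ge 0$, the observation that $V_\la z_0\not\equiv 0$ on $G$ (a consequence of $z_0>0$ in $\Om$, by a geometric argument essentially identical to yours), and the fact that the negative part on the parabolic boundary tends to zero. Then Lemma~\ref{l:ref-2-moved}(ii) is applied with $z=z_1$ rather than $z=z_0$. This interior Harnack inequality is the key technical ingredient your proposal is missing; identifying that you need a local positivity-propagation estimate, rather than a backward-in-time version of Theorem~\ref{thmest}, is what makes the argument go through.
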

This is stated in \cite[Theorem 3.4]{P:symm-survey}. 
As indicated in 
\cite[Sections 4.2,~6]{P:symm-survey}, the theorem
can be proved by similar techniques as Theorem \ref{thmest}
if the general scheme of the proof is suitably adjusted to deal with
the symmetry at all times rather than with the asymptotic symmetry as
$t\to\infty$.  For the reader's convenience, in Appendix~A
we give an alternative proof based on the 
properties of the functional $\La$
established in the previous subsection and a generalized Harnack
inequality from \cite[Theorem 2.2]{P:est}.
Under stronger  hypotheses requiring in particular the
strict positivity of all elements of $\al(u)$ and a sign condition on
the nonlinearity, the symmetry result is proved in 
\cite{Babin:symm1, Babin-S}.

\section{Proof of Theorem \ref{thm2}}\label{proof}
Throughout the section we assume the
hypotheses (D1), (D2), (A), (F1), and (F2) 
to be satisfied.

We start with two results concerning  equilibria.
The proof of the following lemma can be found in  
\cite{P:symm-ell}.   

\begin{lemma}
  \label{le:char} 
Let  $z\in E$. Then the following statements are valid.
  \begin{itemize}
  \item[(i)] $z\in E_+$ if and only if $z\not \equiv 0$ and $z$
    vanishes somewhere in $\Om$.    
 \item[(ii)] If $z\not\equiv 0$, then $z$ does not vanish on any open subset of $\Om$ and 
   $\la=\La(z)$ is the maximal
   number in $(0,\ell)$ with the property that the function $V_\la z$ vanishes
   identically in a connected component of $\Om_\la$. 
  \end{itemize}
\end{lemma}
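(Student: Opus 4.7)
The plan is to prove (ii) first and deduce (i) from it. For the unique continuation part of (ii), I would linearize the equilibrium equation: by the Lipschitz hypothesis (F1), applied via the mean value theorem componentwise, one can write
\begin{equation*}
f(x, z(x), \nabla z(x)) = f(x, 0, 0) + c(x) z(x) + b(x) \cdot \nabla z(x),
\end{equation*}
with bounded measurable coefficients $c, b$, so that $z$ satisfies a linear elliptic equation with forcing $-f(\codt, 0, 0)$. If $z$ vanished on an open ball $B \subset \Om$, then $f(\codt, 0, 0) \equiv 0$ on $B$ as well, and $z$ would be a nonnegative solution of a homogeneous linear elliptic equation with bounded coefficients vanishing on an open subset of the connected domain $\Om$. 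An Aronszajn-type unique continuation theorem (or, more elementarily, the strong maximum principle applied after a standard shift of the zeroth-order term) forces $z \equiv 0$ on $\Om$, contradicting $z \not\equiv 0$.

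For (i), the case $z \equiv 0$ is immediate. The case $z > 0$ throughout $\Om$ follows from the classical Berestycki--Nirenberg moving plane method, which gives symmetry of $z$ about $H_0$ and strict monotonicity in $x_1$ for $x_1 > 0$; hence $V_\la z > 0$ on $\Om_\la$ for every $\la \in (0, \ell)$, so $\La(z) = 0$. For the converse implication, suppose $z \not\equiv 0$ vanishes at some point of $\Om$ and, for a contradiction, $\La(z) = 0$. By Remark \ref{nng}, $z$ is nonincreasing in $x_1$ on $\Om_0$, and by \cite{P:symm-ell} $z$ is symmetric in $x_1$, so $z$ is nondecreasing in $x_1$ on $\{x \in \Om : x_1 < 0\}$. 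If $z(x_0) = 0$ at an interior point $x_0$ with first coordinate $x_{0,1} \ge 0$ (WLOG by symmetry), monotonicity propagates $z \equiv 0$ to the nonempty open set $\{x \in \Om : x_1 \ge x_{0,1}\}$ (open and nonempty by $x_1$-convexity of $\Om$). The unique continuation from (ii) then forces $z \equiv 0$, the desired contradiction.

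For the second assertion of (ii), set $\la^* := \La(z)$. By continuity, $V_{\la^*} z \ge 0$ on $\Om_{\la^*}$, and by the Hadamard formula applied to the equilibrium equations for $z$ and its reflection across $H_{\la^*}$, $V_{\la^*} z$ satisfies a linear elliptic equation with bounded coefficients on each connected component $M$ of $\Om_{\la^*}$; it is $\ge 0$ on $M$ and vanishes on $\partial M \cap H_{\la^*}$. The strong maximum principle gives the dichotomy: on each $M$, either $V_{\la^*} z \equiv 0$ or $V_{\la^*} z > 0$. If strict positivity held on every component, the small-domain maximum principle behind Theorem \ref{imp}, together with the H\"older continuity of $z$, would permit $\la$ to be lowered slightly below $\la^*$ while preserving $V_\la z \ge 0$ on $\Om_\la$, contradicting the definition of $\La(z)$ as an infimum. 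Hence $V_{\la^*} z \equiv 0$ on at least one component. For maximality, if $V_{\la_1} z \equiv 0$ on a component $M_1$ of $\Om_{\la_1}$ for some $\la_1 > \la^*$, applying the same dichotomy-and-perturbation step at $\la_1$ (on the components where $V_{\la_1} z > 0$) and iterating the moving-plane procedure downward yields a chain of degenerate components between $\la^*$ and $\la_1$; a bookkeeping of the symmetries forced by these degeneracies, combined with $z \not\equiv 0$ and the unique continuation already proven, pins down $\la_1 \le \la^*$, a contradiction.

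I expect the most delicate step to be the perturbation argument producing a degenerate component at $\la = \la^*$: because $\Om$ is not assumed smooth, classical Hopf-lemma comparisons at $\partial \Om$ are unavailable, and one must combine the small-domain maximum principle (Theorem \ref{imp}) with interior H\"older estimates to carry out the moving-plane step rigorously.
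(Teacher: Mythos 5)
The paper does not prove this lemma in the text; it cites \cite{P:symm-ell}, so your argument must stand on its own, and it has two genuine gaps. The first is in the unique continuation step of (ii). You write the equilibrium equation as $\Delta z + c(x)z + b(x)\cdot\nabla z = -f(x,0,0)$ and, from $z\equiv 0$ on a ball $B$, correctly deduce $f(\cdot,0,0)\equiv 0$ on $B$. But this makes the equation homogeneous only on $B$ (or, by (F2), on the cylinder over $\mathcal{P}(B)$), not on the connected domain $\Om$; since $f(\cdot,0,0)$ has no sign away from $B$, neither an Aronszajn-type unique continuation theorem applied to $z$ on $\Om$ nor the strong maximum principle applies as you invoke them. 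A correct route uses (F2) by differentiating the equilibrium equation in $x_1$: the function $v:=\partial_{x_1}z$ satisfies the \emph{homogeneous} linear equation $\Delta v + f_u\, v + f_p\cdot\nabla v = 0$ with bounded measurable coefficients (here $f_u,f_p$ are evaluated along $z$, and the term $f_{x_1}$ drops out by (F2)), and $v\equiv 0$ on $B$; unique continuation now gives $v\equiv 0$ on $\Om$, so $z$ is constant in $x_1$, and the Dirichlet condition together with the $x_1$-convexity of $\Om$ forces $z\equiv 0$.

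The second gap is in the converse direction of (i). From $z(x_0)=0$, $x_{0,1}\ge 0$, and the monotonicity of $z$ in $x_1$ on $\Om_0$, you can only conclude $z=0$ on the one-dimensional segment $\{(x_1,x_0'):x_1\ge x_{0,1}\}\cap\Om$, not on any open set: monotonicity in $x_1$ gives no comparison between different $x'$-slices, so the set you name is not a set where the zero propagates. One way to repair this is again via $v=\partial_{x_1}z$: on $\Om_0$ one has $v\le 0$, $v$ satisfies the homogeneous linear equation, and $v=0$ at interior points of that segment; the strong maximum principle applied to $-v$ then forces $v\equiv 0$ on the connected component of $\Om_0$ containing the segment, so $z$ is $x_1$-constant there, hence vanishes on an open set, and one is back to the unique-continuation case. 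Finally, your dichotomy-plus-small-domain sketch for producing a degenerate component at $\la=\La(z)$ is reasonable, but the maximality step is reduced to an unspecified ``bookkeeping of symmetries'' and is too vague to verify; what is actually needed is to show that $V_{\la_1}z\equiv 0$ on a component of $\Om_{\la_1}$ for some $\la_1>\La(z)$, combined with evenness of $z$ about $H_0$ and the monotonicity on $\Om_{\La(z)}$, forces $z$ to vanish on an open set, contradicting the first part of (ii).
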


\begin{lemma}\label{efi} The set $E_+$  has only finitely many elements.
Moreover, for each $z\in E_+$ one has $\dist_{C_0(\bar{\Om})}(z, E_0)>0$.
\end{lemma}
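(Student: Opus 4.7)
The plan is to prove the two claims separately: the distance bound has a short direct proof, while finiteness requires compactness and a rigidity argument that is the main obstacle.

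For the distance bound I argue by contradiction. Suppose some $z_0\in E_+$ admits a sequence $\{z_n\}\subset E_0$ with $z_n\to z_0$ in $C_0(\bar{\Omega})$. Each $z_n$ being in $E_0$ means $V_\mu z_n\ge 0$ on $\Omega_\mu$ for every $\mu\in(0,\ell)$. Since the map $z\mapsto V_\mu z$ is continuous from $C_0(\bar{\Omega})$ to $C(\bar{\Omega}_\mu)$, passing to the limit $n\to\infty$ yields $V_\mu z_0\ge 0$ on $\Omega_\mu$ for every $\mu>0$, hence $\La(z_0)=0$, i.e., $z_0\in E_0$, contradicting $z_0\in E_+$.

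For finiteness I again argue by contradiction: let $\{z_n\}\subset E_+$ be a sequence of pairwise distinct elements. Uniform $L^\infty$ bounds on equilibria (from Theorem \ref{max:principle} applied to the linearized equation via the Lipschitz hypothesis (F1)), together with interior elliptic regularity and the boundary H\"older estimates based on condition (A), give relative compactness of $E$ in $C_0(\bar{\Omega})$. Passing to a subsequence, $z_n\to z^*\in E$ in $C_0(\bar{\Omega})$, and by Lemma \ref{zcc} a further subsequence has $\la_n:=\La(z_n)\to\la^*\in[0,\ell-\varepsilon_0]$. By Lemma \ref{le:char}(ii) each $z_n$ comes with a connected component $D_n$ of $\Omega_{\la_n}$ on which $V_{\la_n}z_n\equiv 0$. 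If $\la^*>0$, I use (D2) to restrict to an interval of $\la$-values on which the component structure of $\Omega_\la$ is constant, extract a further subsequence with $D_n$ Hausdorff-converging to an open set $D^*$ containing a component of $\Omega_{\la^*}$, and pass to the limit in $V_{\la_n}z_n\equiv 0$ on $D_n$ and in $V_\mu z_n\ge 0$ (valid for every $\mu>\la_n$) to obtain $V_{\la^*}z^*\equiv 0$ on $D^*$ and $V_\mu z^*\ge 0$ on $\Omega_\mu$ for every $\mu>\la^*$. Lemma \ref{le:char}(ii) then gives $\La(z^*)=\la^*$, so $z^*\in E_+$. If instead $\la^*=0$, an analogous limit argument gives $\La(z^*)=0$, so $z^*\in E_0$.

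The main obstacle is closing the argument by extracting a contradiction from $z_n\to z^*$ with all $z_n$ distinct. In the case $\la^*>0$, I plan to exploit the linear elliptic problem satisfied by $w_n:=z_n-z^*$, obtained by subtracting the two equilibrium equations and applying the Hadamard formula to (F1), with zero boundary data on $\partial\Omega$; combining this with the reflection identities $V_{\la_n}z_n\equiv 0$ on $D_n$ and $V_{\la^*}z^*\equiv 0$ on $D^*$ should, through the strong maximum principle and the rigid symmetry of $z_n$ across $H_{\la_n}$, force $w_n\equiv 0$ for large $n$. In the case $\la^*=0$ with $z^*\equiv 0$, I would rescale $v_n:=z_n/\|z_n\|_{L^\infty}$ and, via the Hadamard decomposition, pass to a limit $v^*$ with $\|v^*\|_\infty=1$ solving a linear homogeneous elliptic problem; the strong maximum principle then gives $v^*>0$ in $\Omega$, contradicting the interior zeros of $v_n$ supplied by Lemma \ref{le:char}(i). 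For $\la^*=0$ with $z^*>0$ in $\Omega$, the strict monotonicity of $z^*$ in $x_1>0$ (since $z^*\in E_0\setminus\{0\}$) is incompatible, in the limit, with the rigid reflection identity $V_{\la_n}z_n\equiv 0$ on $D_n$ once the interior location of a ball inside $D_n$ (via the inner-radius bound from (D2)) is exploited. Executing these closing steps under only the weak boundary regularity permitted by condition (A) is the delicate core of the proof.
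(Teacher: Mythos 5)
Your argument for the second claim (the positive distance to $E_0$) is correct and matches the paper's in substance: the paper notes that every $z\in E_0$ is monotone nonincreasing in $x_1$ on $\Om_0$ and this property passes to limits, which is exactly the statement $V_\mu z\ge 0$ for all $\mu>0$ that you pass to the limit. So that part is fine.

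The finiteness claim is where the proposal and the paper diverge, and where your argument has a genuine gap. The paper does not prove finiteness directly at all: it cites Theorem 2.1 of \cite{P:symm-mult}, whose proof is a substantial separate analysis of nonnegative equilibria with nontrivial nodal sets, and only adds a remark that the extra regularity hypothesis used there (to linearize a fully nonlinear principal part) is unnecessary here because the principal part is the Laplacian. Your plan instead is to derive finiteness from relative compactness of $E$ together with a rigidity/isolation argument. The compactness and the passage-to-the-limit steps (getting $z^*\in E$, $\la_n\to\la^*$, $D_n\to D^*$, $\La(z^*)=\la^*$) are plausible, but the ``closing step'' that is supposed to derive a contradiction is precisely the hard content, and the sketches given do not close it. In the case $\la^*>0$, the proposal claims that the strong maximum principle applied to $w_n=z_n-z^*$, together with the reflection identities, forces $w_n\equiv 0$ for large $n$. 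But $w_n$ is a sign-changing solution of a linear elliptic equation in general, and the strong maximum principle says nothing about a sign-changing solution; nor do the reflection identities $V_{\la_n}z_n\equiv 0$ on $D_n$ and $V_{\la^*}z^*\equiv 0$ on $D^*$ by themselves prevent a one-parameter family of nearby equilibria sharing the same nodal structure. Establishing that elements of $E_+$ are isolated in $E$ is essentially the content of \cite[Theorem 2.1]{P:symm-mult} and requires a more delicate argument than a naive application of the maximum principle. In the case $\la^*=0$, $z^*\equiv 0$, the rescaled limit $v^*>0$ in $\Om$ would only give a contradiction if the interior zeros of $v_n$ (guaranteed by Lemma~\ref{le:char}(i)) stay in a compact subset of $\Om$; as $z_n\to 0$ and $\la_n\to 0$ the nodal set can a priori collapse toward $\partial\Om$, and you have not ruled this out, especially under the weak regularity condition (A). In short, the distance estimate is sound, but the finiteness argument is a sketch of a genuinely different (and not completed) route from the paper's citation to \cite{P:symm-mult}, and the key isolation step is left open.
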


\begin{proof}
  The second statement follows from the fact that each element of
$E_0$ is monotone in $x_1$ in the set $\Om_0$, whereas $z\in E_+$ is
obviously not monotone. 

The first statement is a result of   
\cite[Theorem 2.1]{P:symm-mult}; however, a remark  on the applicability of
\cite{P:symm-mult} is necessary. 
In \cite{P:symm-mult} fully nonlinear equations were considered. To
guarantee that a linearization of the equation has   
Lipschitz continuous coefficients in the principal part, a slightly
higher regularity of $U$ had to be required.  In the present case, the
leading part of the equation is always 
given by the Laplacian, hence 
the extra regularity assumption is not needed.
\end{proof}

We next recall an invariance property of the $\om$ and $\al$-limit sets.

\begin{lemma}\label{eqc} If $U \in \mathcal{A}$
and $z \in\omega(U)$ (or $z \in \alpha (U)$), 
then there is $Z \in \mathcal{A}$ with $z = Z(\cdot, 0)$ and 
$Z(\cdot, t) \in \omega (U)$ (or, respectively, $Z(\cdot, t) \in \alpha (U)$) 
for any $t \in \R$.
\end{lemma}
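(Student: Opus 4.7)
The plan is to use the standard translation-and-compactness argument for limit sets of entire solutions. Fix $z \in \omega(U)$ and pick a sequence $t_n \to \infty$ with $U(\cdot, t_n) \to z$ in $C_0(\bar{\Om})$. Define the time-translates
\begin{equation*}
U_n(x,t) := U(x, t + t_n), \qquad (x,t) \in \bar\Om \times \R.
\end{equation*}
Each $U_n$ is again a bounded nonnegative entire solution of \eqref{lim}, with the same $L^\infty$ bound as $U$.

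Next I would establish precompactness of the sequence $\{U_n\}$ in $C(\bar{\Om} \times [-T, T])$ for every $T > 0$. This follows from the global H\"older estimate \eqref{eq:hol1U}, which applies uniformly to each $U_n$ with constants depending only on the $L^\infty$ bound of $U$, together with Arzel\`a--Ascoli. A standard diagonal extraction then yields a subsequence (still denoted $U_n$) converging locally uniformly on $\bar\Om \times \R$ to some continuous function $Z$ which vanishes on $\partial\Om \times \R$ and is bounded by $\|U\|_\infty$.

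To see that $Z$ actually solves \eqref{lim}, I would upgrade the convergence on interior compact sets using standard interior parabolic Schauder and $L^p$ estimates: given (F1) and the uniform H\"older bound \eqref{eq:hol1U}, the nonlinear term $f(x, U_n, \nabla U_n)$ is uniformly H\"older continuous on interior compact sets, which gives uniform $C^{2+\alpha, 1+\alpha/2}_{\textrm{loc}}$ bounds on $U_n$. Passing to a further subsequence, $U_n \to Z$ in $C^{2,1}_{\textrm{loc}}(\Om \times \R)$, so $Z$ satisfies the PDE classically on $\Om \times \R$. Combined with the continuous boundary behaviour inherited in the limit, this gives $Z \in \mathcal{A}$.

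Finally, $Z(\cdot, 0) = \lim_n U(\cdot, t_n) = z$, and for any fixed $t \in \R$, $Z(\cdot, t) = \lim_n U(\cdot, t + t_n)$ with $t + t_n \to \infty$, so by definition $Z(\cdot, t) \in \omega(U)$. The case $z \in \alpha(U)$ is identical with $t_n \to -\infty$. The only step requiring any care is the upgrade of compactness from merely uniform to $C^{2,1}_{\textrm{loc}}$, needed to pass to the limit in the nonlinear equation; but this is routine given the H\"older regularity of $f$ and the uniform H\"older bound on the sequence.
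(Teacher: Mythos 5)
Your proposal follows essentially the same strategy as the paper's proof (time translates $U(\cdot,\cdot+t_n)$, compactness from the global H\"older estimate, interior regularity bootstrap, and passage to the limit in the PDE), so the approach is correct in outline. However, the bootstrap step as written has a genuine logical gap: you claim that (F1) together with \eqref{eq:hol1U} makes $f(x,U_n,\nabla U_n)$ uniformly H\"older continuous on interior compacta, and from there jump directly to $C^{2+\alpha,1+\alpha/2}_{\textrm{loc}}$ bounds. But \eqref{eq:hol1U} controls only $U_n$, not $\nabla U_n$, so no H\"older estimate on $f(x,U_n,\nabla U_n)$ can be extracted from it. The correct chain is: first rewrite the equation via the Hadamard formula as $(U_n)_t = L(x,t)U_n + f(x,0,0)$ with $L\in\mathcal{E}(\beta_0,\Om\times\R)$ having merely bounded coefficients and a bounded forcing term; then apply interior $W^{2,1}_p$ estimates; then use the parabolic Sobolev embedding to get uniform $C^{1+\beta,\beta/2}_{\textrm{loc}}$ bounds; only at this point do (F1) and the H\"older regularity of $f$ in \eqref{eq:3} render $f(x,U_n,\nabla U_n)$ uniformly H\"older, so that Schauder applies.

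The paper also takes a slightly more economical route at the very last stage: rather than applying Schauder to each $U_n$ to obtain uniform $C^{2+\alpha,1+\alpha/2}$ bounds and $C^{2,1}_{\textrm{loc}}$ convergence, it stops at $C^{1+\beta,\beta/2}_{\textrm{loc}}$ convergence plus weak $W^{2,1}_p$ convergence, passes to the limit with test functions to show $Z$ is a generalized solution of a nonhomogeneous heat equation with H\"older data, and then invokes Schauder once, for the limit function $Z$. Your version works too once the bootstrap is repaired, but requires one more Schauder application and more uniform control than is really needed.
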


\begin{proof} 
This is quite a standard result, however, since we do not assume any
smoothness of $\partial\Om$, we need to deal with some regularity issues. 
We carry out the details 
for $\om(U)$,  $\al(U)$ can be dealt with similarly.

We will employ the global H\"older estimate
\eqref{eq:hol1U} as well as the following
interior $L^p$--estimate. For any $p\in (1,\infty)$, $T\in (0,\infty)$,
and any domain $\tilde \Om$ whose closure is contained in $\Om$ one has
\begin{equation}\label{lpest}
\sup_{s \in \R} \|U\|_{W^{2,1}_p(\tilde \Om\times (s-T,s+T))} <\infty\,.     
\end{equation}
To prove  \eqref{lpest}, rewrite the equation for $U$ as follows:
\begin{align}
  U_t&=\De U+f(x,U,\nabla U)-f(x,0,0)+f(x,0,0)\notag\\
&=L(x,t) U+f(x,0,0),\label{eq:forlp}
\end{align}
where $L \in \cE(  \beta_0, \Om \times \R)$ and $\beta_0$ is as in
(F1). The function $f(x,0,0)$ is bounded by (F1). 
Applying to this
 linear nonhomogeneous equation
the interior $L^p$--estimates  (see 
\cite{Ladyzhenskaya-S-U, Lieberman:bk}), 
one obtains \eqref{lpest}.

 Let now $z\in \om(U)$. 
There is a sequence $s_m \to \infty$ such that 
$U(\cdot, s_m) \to z$ in $C_0(\bar{\Om})$. Using  \eqref{eq:hol1U} 
 and a diagonalization procedure, replacing $s_m$ 
by a subsequence if necessary, one
shows that the limit 
\begin{equation}
  \label{eq:1lim}
  Z:=\lim_{m\to\infty} U(\cdot, \cdot + s_m)
\end{equation}
exists, pointwise and  uniformly on the compact subsets of
$\bar\Om\times \R$. Clearly, $Z(\codt,0)=z$ and  
$Z(\codt,t)\in \om(U)$ for each $t$. 
Using the reflexivity of the Sobolev spaces
$W^{2,1}_p(\tilde \Om\times (s-T,s+T))$, 
$p\in (1,\infty)$, and their
continuous imbedding in H\"older spaces  \cite{Ladyzhenskaya-S-U}, one 
further obtains, replacing $s_m$ 
by a subsequence again, that the limit in \eqref{eq:1lim} takes place
in $C^{1+\be,\be/2}_{\textrm{loc}}(\Om\times \R)$ for each $\be\in (0,1)$,
as well as weakly in $W^{2,1}_p(\tilde \Om\times (s-T,s+T))$ for each
$p\in (1,\infty)$, $T>0$, and $\tilde \Om$ as above.
In particular, $Z\in C^{1+\be,\be/2}_{\textrm{loc}}(\Om\times \R)$ and
\begin{equation*}
 f(\cdot, U(\cdot, \cdot + s_m), \nabla U(\cdot, \cdot + s_m))\to 
f(\cdot, Z, \nabla Z)
\end{equation*}
locally uniformly in $\Om\times \R$. Using test
functions and passing to the limit in the equation for $U$, we obtain
that $Z$ satisfies in the generalized sense the  equation
\begin{equation}
  \label{eq:nonhheat}
  Z_t=\De Z+\Phi (x,t),\quad x\in \Om,\,t\in\R,
\end{equation}
where $\Phi (x,t)=f(x, Z(x,t), \nabla Z(x,t))$. By \eqref{eq:3}, this
function is H\"older continuous, hence by Schauder theory $Z$ is a
classical solution of \eqref{eq:nonhheat}. 
\end{proof}

Note that $0\in \cA$ if an only if $0$ is an equilibrium of
\eqref{lim}, that is, if and only if $f(\codt,0,0)\equiv
0$. Below, when writing $U\in \cA\setminus\{0\}$ we mean that 
$U\in \cA$ and $U\not\equiv 0$ (in case $0\in E$). The next lemma
shows that this is the same as saying that $U\in \cA$ and 
$U(\codt,t)\not\equiv 0$ for any $t\in\R$ on any open subset of $\Om$.

\begin{lemma}
  \label{claim4}
Let $U \in \mathcal{A}$. If $U(\codt,\tau)\equiv 0$ on an open subset
$G$ of $\Om$ for some 
$\tau\in \R$, then  $U \equiv 0$.
\end{lemma}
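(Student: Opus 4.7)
The plan is to bootstrap from the pointwise vanishing of $U$ on $G\times\{\tau\}$ to the identical vanishing of $U$ on $\Om\times\R$ in three stages.

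\emph{Stage 1.} At any $x_0\in G$, because $U(\cdot,\tau)$ is identically zero in a spatial neighborhood of $x_0$, all first and second spatial derivatives of $U$ at $(x_0,\tau)$ vanish; in particular $\nabla U(x_0,\tau)=0$ and $\Delta U(x_0,\tau)=0$. Since $U\ge 0$ on $\Om\times\R$ and $U(x_0,\tau)=0$, the map $t\mapsto U(x_0,t)$ attains a global minimum at $\tau$, forcing $U_t(x_0,\tau)=0$. Substituting into the PDE $U_t=\Delta U+f(x,U,\nabla U)$ yields
\begin{equation*}
f(x_0,0,0)=0\qquad(x_0\in G).
\end{equation*}

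\emph{Stage 2.} By the Hadamard device and (F1), rewrite the equation on $\Om\times\R$ in linear form
\begin{equation*}
U_t-\Delta U-b(x,t)\cdot\nabla U-c(x,t)U=f(x,0,0),
\end{equation*}
with bounded $b,c$. Restricted to $G\times\R$ the inhomogeneity vanishes by Stage 1, so $U\ge 0$ solves there a homogeneous linear parabolic equation and attains the value $0$ at the interior points $(x_0,\tau)$, $x_0\in G$. The parabolic strong maximum principle, applied backward in time on each connected component of $G$, then forces $U\equiv 0$ on $G\times(-\infty,\tau]$.

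\emph{Stage 3.} To propagate the vanishing outside $G$, I invoke a unique continuation theorem for linear parabolic equations with bounded coefficients. Since $U$ vanishes on the open space-time set $G\times(-\infty,\tau)$ and solves the linear parabolic equation displayed above on $\Om\times\R$, the theorem forces $U\equiv 0$ on $\Om\times(-\infty,\tau)$. Once $U(\cdot,t_0)\equiv 0$ on $\Om$ for some $t_0<\tau$, repeating the argument of Stage 1 at an arbitrary $x\in\Om$ gives $f(\cdot,0,0)\equiv 0$ on $\Om$, hence the constant function $0$ is a classical solution of \eqref{lim}. Forward uniqueness of the Dirichlet initial-boundary value problem starting from $U(\cdot,\tau)\equiv 0$ then yields $U\equiv 0$ on $\Om\times[\tau,\infty)$ as well.

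The main obstacle lies in Stage 3. The strong maximum principle of Stage 2 only propagates the vanishing of $U$ along the time direction within the open set $G$, and crossing $\partial G$ inside $\Om$ genuinely requires a unique continuation argument for parabolic operators, which under only boundedness of the coefficients rests on Carleman-type estimates. The remaining pieces (Stages 1, 2, and the forward-uniqueness conclusion) are standard consequences of the smoothness of $U$ and the Lipschitz structure of $f$.
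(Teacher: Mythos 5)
Your Stages 1 and 2 reproduce the paper's opening moves exactly: deduce $f(\cdot,0,0)=0$ on $G$, conclude that on $G\times\R$ the function $U$ is a nonnegative supersolution of a \emph{homogeneous} linear parabolic equation, and then apply the strong maximum principle backward in time to get $U\equiv 0$ on $G\times(-\infty,\tau]$.

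Stage 3 contains a genuine gap. You propose to apply unique continuation directly to $U$, but the linear equation you wrote on all of $\Om\times\R$,
\begin{equation*}
U_t-\Delta U-b(x,t)\cdot\nabla U-c(x,t)U=f(x,0,0),
\end{equation*}
is \emph{nonhomogeneous}: you have only shown $f(\cdot,0,0)\equiv 0$ on $G$, not on $\Om\setminus G$. Weak unique continuation theorems (such as \cite{Alessandrini:V-uni:cont}, which the paper cites) apply to solutions of a homogeneous linear parabolic equation; a function that vanishes on an open subcylinder but solves a nonhomogeneous equation elsewhere need not vanish identically, since the source term can generate nonzero values. You cannot absorb $f(x,0,0)$ into the zeroth-order coefficient either, because on $G$ (where $U\equiv 0$) the resulting quotient $f(x,0,0)/U$ is undefined, and more generally unbounded wherever $U$ is small. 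So the inference from ``$U\equiv 0$ on $G\times(-\infty,\tau)$'' to ``$U\equiv 0$ on $\Om\times(-\infty,\tau)$'' is not justified by the cited theorem. The paper circumvents this by introducing the time-difference $w(x,t):=U(x,t+\delta)-U(x,t)$; the inhomogeneity $f(x,0,0)$ cancels and $w$ solves a genuinely homogeneous linear equation with bounded coefficients on all of $\Om\times\R$, vanishing on $G\times(-\infty,\tau-1)$. Unique continuation then legitimately gives $w\equiv 0$, hence $U$ is $\delta$-periodic for every small $\delta$, hence $U$ is an equilibrium, and one concludes via Lemma \ref{le:char}. Your plan needs this time-difference device (or an equivalent way of producing a homogeneous equation on all of $\Om$) to close Stage 3.
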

\begin{proof}
Since $U \geq 0$, $0$ is a local minimum of $U$. 
Thus, $U_t(\cdot, \tau) \equiv 0$ on $G$. Of course, 
one also has $|\nabla U(\cdot, \tau)| \equiv \Delta U(\cdot, \tau)
\equiv 0$ in $G$.  
From \eqref{lim}, 
we obtain $f(x, 0, 0) = 0$ for each $x \in G$, and therefore $U$ solves
\begin{equation*}
U_t = \Delta U + f(x, U, \nabla U) - f(x, 0, 0) = L(x, t)U, \qquad (x,
t) \in G \times \R\,, 
\end{equation*}
where $L \in \mathcal{E}(\beta_0, G \times \R)$. Since $U \geq 0$ and
$U (\cdot, \tau) \equiv 0$ on $G$,  
the strong maximum principle yields $U \equiv 0$ on $G \times
(-\infty, \tau)$. For any $\delta \in (0, 1)$  
define $w(x, t) := U(x, t + \delta) - U(x, t)$. Then $w$ solves 
\begin{equation*}
\begin{aligned}
w_t &= L(x, t) w, &\qquad &(x, t) \in \Om \times \R \,, \\
  w &= 0,  &\qquad &(x, t) \in \partial \Om  \times \R \,,
\end{aligned}
\end{equation*}
and $w \equiv 0$ on $G \times (-\infty, \tau - 1)$. Therefore, by 
the weak unique continuation theorem (see e.g. \cite{Alessandrini:V-uni:cont})
$w \equiv 0$ on $\Om \times (-\infty, \tau - 1)$. 
Consequently, by the uniqueness for the Dirichlet 
initial-boundary value problem (which follows from the maximum
principle), $w \equiv 0$ on $\Om \times (-\infty, \infty)$. 
Hence $U(\cdot, \cdot) \equiv U(\cdot, \cdot + \delta)$ for each 
$\delta \in (0, 1)$, and therefore $U$ is
an equilibrium. Since $U \equiv 0$ on $G$,  Lemma \ref{le:char}
 implies that $U \equiv 0$ in $\Om$.   
\end{proof}

The following lemma  
is crucial for our further arguments. 
It shows that entire solutions with certain
additional properties have to be equilibria.

\begin{lemma} 
\label{cor:equil}
Let $U\in \cA\setminus \{0\}$, $\la_0 \in (0,\ell)$,  
and let $I$ be an open interval.
Assume that for each $t\in I$, one has 
$V_{\la_0} (U(\cdot, t)) \equiv 0$  on 
 some connected component $D^t$  of $\Om_{\la_0}$. Then
$U\in E_+$. 
\end{lemma}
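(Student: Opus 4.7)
My approach has three parts: pigeonhole to a fixed component, propagate the reflective identity to all times, and deduce that $U$ is an equilibrium.

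By (D2) the set $\Omega_{\lambda_0}$ has only finitely many components $D_1,\ldots,D_k$. For each $j$ the set $S_j := \{t\in I : V_{\lambda_0}U(\cdot,t) \equiv 0 \text{ on } D_j\}$ is closed in $I$ by continuity of $t\mapsto U(\cdot,t)$ in $C_0(\bar\Omega)$, and $I = \bigcup_j S_j$, so Baire's theorem yields a component $D$ and an open subinterval $J\subset I$ with $V_{\lambda_0}U(\cdot,t)\equiv 0$ on $D$ for every $t\in J$. Equivalently, the function $w^{\lambda_0}$ from \eqref{dfw} vanishes identically on $D\times J$, and by continuity also on $\bar D\times J$. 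For $y\in\partial\Omega\cap\bar D$, the Dirichlet condition $U(y,t)=0$ forces $U(P_{\lambda_0}y,t)=0$ for $t\in J$; by (D1) and $\lambda_0>0$ the point $P_{\lambda_0}y$ lies in the interior of $\Omega$, so $U$ vanishes along the interior hypersurface $\Sigma := P_{\lambda_0}(\partial\Omega\cap\bar D)$ for every $t\in J$.

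The main work is to extend the identity $w^{\lambda_0}\equiv 0$ on $D$ from $J$ to all of $\R$. The function $w^{\lambda_0}$ solves the linear parabolic equation \eqref{ref} on $\Omega_{\lambda_0}\times\R$ with boundary behaviour $w^{\lambda_0}\geq 0$ on $\partial\Omega\cap\bar\Omega_{\lambda_0}$ and $w^{\lambda_0}=0$ on $H_{\lambda_0}\cap\bar\Omega_{\lambda_0}$. The plan is to combine (i) the interior vanishing of $U$ along $\Sigma\times J$, (ii) Lemma \ref{claim4}, which forbids $U$ from vanishing on any open subset of $\Omega$, and (iii) the refined maximum-principle and positivity-propagation estimates of Subsection \ref{linear} (Theorems \ref{max:principle}, \ref{imp}), via a Hopf-type boundary-point argument at $y\in\partial\Omega\cap\bar D$, to force the boundary contribution $w^{\lambda_0}|_{(\partial\Omega\cap\partial D)\times\R}$ to vanish identically. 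Backward and forward uniqueness for the linear initial-boundary value problem on $D$ with the resulting Dirichlet data then yield $w^{\lambda_0}\equiv 0$ on $D\times\R$.

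With the reflective symmetry in place for all times, differentiating the identity in $t$ gives $V_{\lambda_0}U_t\equiv 0$ on $D\times\R$, and $U_t$ solves the linearized equation $(U_t)_t = \Delta U_t + f_u(x,U,\nabla U)\,U_t + f_p(x,U,\nabla U)\cdot\nabla U_t$ on $\Omega\times\R$ with Dirichlet data $U_t=0$ on $\partial\Omega\times\R$. The weak unique continuation tool used in Lemma \ref{claim4} then propagates the symmetric vanishing into $U_t\equiv 0$ on $\Omega\times\R$, so $U\in E$; Lemma \ref{le:char}(ii) applied to the vanishing of $V_{\lambda_0}U$ on the component $D$ finally gives $\La(U)\geq\lambda_0>0$, i.e., $U\in E_+$. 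The crux and main obstacle is the middle step: exploiting the reflective structure together with $U\not\equiv 0$ to rule out any non-trivial boundary contribution of $w^{\lambda_0}$ for times outside $J$, which requires a delicate use of the boundary-point analysis available under the mild regularity (A).
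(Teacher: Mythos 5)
Your plan diverges from the paper's proof in two places, and both divergences open genuine gaps.

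First, the middle step you call ``the crux and main obstacle'' is in fact bypassed entirely in the paper. You propose to extend the identity $w^{\lambda_0}\equiv 0$ on $D$ from $t\in J$ to all $t\in\R$, using a Hopf-type boundary argument and the estimates of Subsection \ref{linear}, but you do not say how this would actually go, and it is not at all clear it can be done: for $t\notin I$ the hypothesis gives you nothing, and the linear problem for $w^{\lambda_0}$ on $D$ has boundary data that is merely nonnegative on $\partial D\cap\partial\Omega$, not zero, so there is no uniqueness-type mechanism preventing nonzero boundary contributions outside $J$. The paper's route is different and avoids this entirely: from $V_{\lambda_0}U\equiv 0$ on $D$ for $t\in J$ it extracts the set $\mathcal{M}:=P_{\lambda_0}(\partial D\cap\partial\Omega)\cap\Omega$ (note the final intersection with $\Omega$ — your assertion that $P_{\lambda_0}y$ is interior for \emph{every} $y\in\partial\Omega\cap\bar D$ is false, since $y\in H_{\lambda_0}$ is fixed by $P_{\lambda_0}$), on which $U=U_t=|\nabla U|=0$ for $t\in J$. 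It then proves a nontrivial geometric Claim — that $\mathcal{M}$ contains an $(N-1)$-dimensional $C^{1+\alpha_0}$ hypersurface $\Upsilon$ — via the implicit function theorem applied to a second-order derivative of $U$, with condition~(A) used to rule out the degenerate case. This Claim, which does most of the technical work, is absent from your proposal. Working only on the time interval $J$, the paper then takes the time-shift $w(\cdot,t)=U(\cdot,t+\delta)-U(\cdot,t)$, which also satisfies $w=|\nabla w|=0$ on $\Upsilon$, constructs the ``patched'' weak solution $w^*$ (equal to $w$ on one side of $\Upsilon$, zero on the other), and applies weak unique continuation to get $w\equiv 0$ on a subinterval, then forward and backward uniqueness to get $w\equiv 0$ on $\Omega\times\R$. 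No extension of the reflective identity beyond $J$ is ever needed.

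Second, your concluding step contains a non sequitur. Even granting $w^{\lambda_0}\equiv 0$ on $D\times\R$, the resulting identity $V_{\lambda_0}U_t\equiv 0$ on $D\times\R$ is a \emph{symmetry} statement about $U_t$ across $H_{\lambda_0}$, not a \emph{vanishing} statement on an open set, so the weak unique continuation used in Lemma~\ref{claim4} does not apply directly to conclude $U_t\equiv 0$. To make a unique-continuation argument work here one would still have to produce an open set where $U_t$ vanishes, or reproduce the paper's $\Upsilon$-patching trick for $U_t$ — which brings you straight back to the geometric Claim you omitted. In short: the pigeonhole reduction and the final appeal to Lemma~\ref{le:char}(ii) are fine, but the two central steps are either missing (the hypersurface Claim, the patched weak solution) or do not go through as stated (the propagation in time, the deduction $U_t\equiv 0$).
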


\begin{proof} Recall that by hypothesis (D2), $\Om_{\la_0}$ has only
  finitely many connected components. 
Since the function $V_{\la_0}U $ is continuous, shrinking the 
 interval $I$  if necessary, we may assume that  the connected
component $D=D^t$ is independent of $t$. 
Since $\la_0 > 0$, $\mathcal{M} \neq \emptyset$, where 
\begin{align*}
\mathcal{M} &:= P_{\la_0} (\partial D \cap \partial \Om) \cap \Om \,. 
\end{align*}
Now by standard interior Schauder estimates,
$U \in C_{\textrm{loc}}^{2 + \alpha_0, 1 + \alpha_0/2}(\Om \times I)$, where $\alpha_0 > 0$ is as in (F1). 
Next, we see that 
\begin{equation}\label{grad0}
U(x, t) =U_t(x,t)= |\nabla U(x, t)| = 0 \qquad ((x, t) \in  \mathcal{M} \times I) \,.
\end{equation}
Indeed, we have
$U = 0$ on $\mathcal{M} \times I$,
since $V_{\la_0} U \equiv 0$ on $D$ and $U$ satisfies the
Dirichlet boundary condition. Since 
 $U \geq 0$ in $\Om \times I$, any point in 
$\mathcal{M} \times I$ is a local minimizer of $U$. Thus
$U_t\equiv |\nabla U| \equiv 0$  on $\mathcal{M} \times I$.

We next prove the following claim.

\vspace{.2cm} 
\noindent 
\textit{Claim.} $\mathcal{M}$ contains an  $(N-1)$-dimensional
$C^{1+\alpha_0}$ manifold  
$\Upsilon$.
 \vspace{.2cm} 

\noindent
Let ${\mathcal{P}} : \R^N \mapsto H_0$, be the orthogonal projection to $H_0$.
Clearly, ${\mathcal{P}} (D) = {\mathcal{P}} (\mathcal{M})$ 
and ${\mathcal{P}} (D)$ has nonempty relative interior in $H_0$.

We show that there is $(x_0, t_0) \in \mathcal{M} \times I$ such that
$D^2 U(x_0, t_0) \neq 0$.    
Otherwise, $\Delta U= 0$ on $\mathcal{M} \times I$,
and combined with \eqref{grad0} this gives $f(x, 0, 0) = 0$ 
for each $x \in \mathcal{M}$. 
Since $f$ is independent of $x_1$, one has $f(\cdot, 0, 0) \equiv 0$
on the open cylinder $\R \times {\mathcal{P}} (D)\subset \R^N$. 
Fix $x_0 \in \mathcal{M}$ and $R > 0$ 
such that $B(x_0, R) \subset (\R \times {\mathcal{P}} (D))\cap\Om$.   
Then $U$ satisfies 
\begin{equation*}
U_t = \Delta U + f(x, U, \nabla U) - f(x, 0, 0) = L(x, t)U, \quad
(x, t) \in B(x_0, R) \times \R \,, 
\end{equation*}
where $L \in \mathcal{E}(\beta_0, B(x_0, R) \times \R)$, 
and $U \geq 0$, $U(x_0, t) = 0$ for $t \in I$. 
Thus by the strong maximum principle $U \equiv 0$ 
on $B(x_0, R)\times I$, and consequently, by 
Lemma \ref{claim4}, $U \equiv 0$ everywhere, a contradiction.  

Hence, we have showed that there are 
$(x_0, t_0) \in \mathcal{M} \times I$
and $k, l \in \{1, \cdots, N\}$  
such that $\partial_{x_k x_l} U (x_0, t_0) \neq 0$. Set $v :=
(U)_{x_k}$. Then one has
$
v \in C_{\textrm{loc}}^{1 + \alpha_0, \alpha_0/2}(\Om \times I)$, $|\nabla v(x_0,
t_0)| \neq 0,$ 
 and, by \eqref{grad0}, $v \equiv 0$ on $\mathcal{M}
\times I$. Denote $\mathcal{Z}:= \{x \in \Om : v(x, t_0) = 0\}$.
By the implicit function theorem, there is $r > 0$ and an  
$(N - 1)$-dimensional $C^{1+ \alpha_0}$ manifold $\Upsilon$ such that  
$\mathcal{Z} \cap B(x_0, r) = \Upsilon \cap B(x_0, r)$. We can also
assume that $\Upsilon$ divides  $B(x_0, r)$ into exactly 2 connected
components.  By \eqref{grad0},  
$\mathcal{M} \cap B(x_0, r) \subset \Upsilon \cap B(x_0, r)$.
We finish the proof of the Claim 
by showing that $\mathcal{M} \cap B(x_0, r) = \Upsilon \cap B(x_0,
r)$. Indeed, if it is not true, then the set
$B(x_0, r) \setminus \mathcal{M}$ is connected.
Consequently,
$P_{\la_0}(B(x_0, r) \setminus \mathcal{M})$
is connected as well.  Clearly this open connected set
contains points of $\Om$, and therefore it cannot contain any points of
$\R^N\setminus\bar\Om$. Thus $P_{\la_0}(B(x_0, r) \setminus
\mathcal{M}) \subset \Om$. 
Since $\partial \Om \subset P_{\la_0}(B(x_0, r) \cap \mathcal{M})$
is a subset of the $(N - 1)$-dimensional manifold
$P_{\la_0}(\Upsilon)$, we obtain a contradiction 
to condition (A). The Claim is proved.

To continue, we fix a ball  $G \subset \Omega$ 
such that $\Upsilon$ divides $G$ into 
two connected components $G_+$ and $G_-$. 
Denote by $\tau$, $T$ the boundary points of the interval $I$: 
$I= (\tau, T)$, and  set
$I_0 := (\tau, (T + \tau)/2 )$. Fix any $\delta \in (0, (T - \tau)/2)$
and denote $w(x, t) := U(x, t + \delta) - U(x, t)$ 
for $(x, t) \in \Om \times \R$. Notice that $t + \delta \in I$
whenever $t \in I_0$. Then
$w \in C^{2 + \alpha_0, 1 + \alpha_0/2}(\Om  \times \R)$ satisfies 
\begin{equation*}
\begin{aligned}
w_t = L(x, t) w, \qquad (x, t) \in \Om \times \R \,, 
\end{aligned}
\end{equation*}
where $L \in \cE(  \beta_0, \Om \times \R)$. We next consider a new
operator  $L^\ast\in \cE(  \beta_0, \Om \times \R)$ defined by
 $L^\ast = L$ on $G_+ \times I_0$ and $L^\ast = \Delta$ on 
$G_- \times I_0$ (the lower order coefficients are equal to 0). 
Also define $w^\ast$ such that $w^\ast = w$ on $G_+ \times I_0$ and
$w^\ast \equiv 0$ on $G_- \times I_0$. Since  
$w$ is a  solution on $G$ and $w = |\nabla w| = 0$ on $\Upsilon$,
an integration by parts shows that 
$w^\ast$ is a weak solution of 
\begin{equation*}
\begin{aligned}
w_t^\ast = L^\ast(x, t) w^\ast, \qquad (x, t) \in G \times I_0 \,.
\end{aligned}
\end{equation*}
Since $w^\ast \equiv 0$ on the open set $G_-  \times I_0$, 
the unique continuation principle
 \cite{Alessandrini:V-uni:cont} yields
$w^\ast \equiv 0$ on $G \times I_0$. Thus $w = w^\ast = 0$ on $G_+
\times I_0$ and the unique  
continuation for $w$ yields $w \equiv 0$ on $\Omega\times
I_0$. Consequently, $ U \equiv U(\codt, \cdot+\delta)$ on  
$\Om \times I_0$ for any sufficiently small $\delta$. 
The uniqueness for the initial value problem implies that 
$ U \equiv U(\codt, \cdot+\delta)$ on  
$\Om \times (\tau,\infty)$ and the backward uniqueness for parabolic
equations   (see Remark \ref{rk:bu} below) then 
gives $ U \equiv U(\codt, \cdot+\delta)$ on $\Om \times\R$, hence
$U \in E$. Since $U\not  \equiv 0$,  
the assumption that $V_{\la_0} U \equiv 0$ on $D$ implies that $U$ has
nontrivial nodal set,  
thus $U\in E_+$ (see Lemma \ref{le:char}). 
\end{proof}

\begin{remark}\label{rk:bu}
\textnormal{
This remark is to justify the use of backward uniqueness 
in the previous proof and in an argument given in the next section.
The  backward uniqueness theorems from 
\cite{Ghidalia-back:uni, Temam:bk}, for example,  
apply to the difference of any two solutions of \eqref{lim}
provided the following  statement holds. 
Given any solution $U$ of   
\eqref{lim}, the  function
$\tilde U:t\mapsto U(\codt,t)$ satisfies
\begin{equation}
  \label{eq:regt}
  \tilde U \in C(\R, H^1_0({\Om})) \cap L^2_{\textrm{loc}}(\R, D(\Delta)).
\end{equation}
Here $D(\Delta)$ is the domain of the $L^2(\Om)$-realization of the
Laplace operator with Dirichlet boundary conditions:
\begin{equation*}
  D(\Delta)=\{\varphi \in H^1_0(\Om): \De \varphi \in L^2(\Om)\}, 
\end{equation*}
where $\De$ is viewed as an isomorphism of $H^1_0(\Om)$ onto
$H^{-1}(\Om)$ (one has $D(\Delta)=H^2(\Om)\cap H^1_0(\Om)$ if $\Om$ is
smooth).  The space $D(\Delta)$ is equipped with the usual graph norm.
For smooth domains $\Om$, \eqref{eq:regt} is well known. 
In the general case, \eqref{eq:regt} can be established by a rather
standard approximation procedure. For the reader's convenience, we
give the details in Appendix B.
}
\end{remark}

\begin{lemma}\label{l:ref-2}
Let  $U \in \mathcal{A}$ and  $\la_0 := \La(U(\cdot, \tau)) > 0$ for
some $\tau\in \R$. Then either $U\in E_+$ or there exists
$\varepsilon > 0$ such that 
\begin{equation}
  \label{eq:4}
  \La(z) \leq \la_0 - \varepsilon\qquad(z \in \omega(U)).
\end{equation}
\end{lemma}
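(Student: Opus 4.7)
The plan is a short dichotomy feeding into the two tools already at our disposal, Lemma~\ref{l:ref-2-moved}(i) and Lemma~\ref{cor:equil}.

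Before splitting I would record two preliminary facts. First, $U\not\equiv 0$: otherwise $V_\mu U\equiv 0\ge 0$ for every $\mu$, so $\La(U(\cdot,\tau))=0$, which contradicts $\la_0>0$. Hence $U\in\mathcal{A}\setminus\{0\}$. Second, by the monotonicity statement of Lemma~\ref{l:dec}, one has $\La(U(\cdot,t))\le \la_0$ for every $t\ge \tau$.

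I would then split into two mutually exclusive and exhaustive cases, according to whether the hypothesis of Lemma~\ref{l:ref-2-moved}(i) can be arranged at some time in $[\tau,\infty)$. In the favorable case there exists $t_0\ge \tau$ such that $V_{\la_0}U(\cdot,t_0)\not\equiv 0$ on each connected component of $\Om_{\la_0}$. Combined with $\La(U(\cdot,t_0))\le \la_0$ from the previous step, Lemma~\ref{l:ref-2-moved}(i) applied with $\tau_0=t_0$ then produces $\varepsilon>0$ with $\La(z)\le \la_0-\varepsilon$ for every $z\in \omega(U)$, which is precisely \eqref{eq:4}.

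In the opposite case, for every $t\ge \tau$ there is a connected component $D^t$ of $\Om_{\la_0}$ on which $V_{\la_0}U(\cdot,t)\equiv 0$. Taking the open interval $I:=(\tau,\infty)$, the hypotheses of Lemma~\ref{cor:equil} are verified (using $U\in\mathcal{A}\setminus\{0\}$ and $\la_0\in(0,\ell)$), and that lemma yields $U\in E_+$, which closes the dichotomy. The only point where genuine work is hidden is this second case, but the extraction of an equilibrium from the persistent vanishing of $V_{\la_0}U$ has already been packaged inside Lemma~\ref{cor:equil} (via interior Schauder regularity, unique continuation, and backward uniqueness), so no additional technicalities are needed in the present argument.
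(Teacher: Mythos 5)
Your proof is correct and matches the paper's own argument essentially line for line: use the monotonicity from Lemma~\ref{l:dec} to get $\La(U(\cdot,t))\le\la_0$ for $t\ge\tau$, then split on whether $V_{\la_0}U(\cdot,t_0)\not\equiv 0$ on every component for some $t_0\ge\tau$, feeding the two cases into Lemma~\ref{l:ref-2-moved}(i) and Lemma~\ref{cor:equil} respectively. The only addition is your explicit check that $U\not\equiv 0$, which is a needed hypothesis for Lemma~\ref{cor:equil} and is left implicit in the paper.
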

\begin{proof}  
By Lemma \ref{l:dec}, 
$\la_0= \La(U(\cdot, \tau))\ge  \La(U(\cdot, \tau_0))$ for each
$\tau_0\ge \tau$. By Lemma \ref{l:ref-2-moved}, relation \eqref{eq:4} holds
for some $\varep>0$, provided there is $\tau_0\ge \tau$ such that
$V_{\la_0}(U(\cdot, \tau_0))\not\equiv 0$ on any connected component   
of $\Om_{\la_0}$. On the other hand, if there is no such $\tau_0\ge
\tau$, then Lemma \ref{cor:equil} applies and we conclude that
 $U\in E_+$.
\end{proof}

\begin{lemma}\label{ols}
If $U \in \mathcal{A} \setminus E_{+}$, $\tau \in \R$,
and $\la_0:=\La(U(\cdot, \tau))$,
then either $\omega(U) = \{z_0\}$ for some $z_0 \in E_+$ with
$\La(z_0) < \la_0$, or 
\begin{equation}
  \label{eq:st}
  \text{$\La(z) = 0$ for each  $z \in \omega(U)$.}
\end{equation}

\end{lemma}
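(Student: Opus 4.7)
The plan is to split according to whether $\la_0=0$ or $\la_0>0$. In the first case, Lemma~\ref{l:dec} yields that $t\mapsto\La(U(\cdot,t))$ is nonincreasing, so $\La(U(\cdot,t))=0$ for every $t\ge\tau$, and then \eqref{limos} forces $\La(z)=0$ for every $z\in\omega(U)$, which is the second alternative \eqref{eq:st}.

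Assume now $\la_0>0$. Since $U\notin E_+$, Lemma~\ref{l:ref-2} supplies $\varepsilon>0$ with $\La(z)\le\la_0-\varepsilon$ for every $z\in\omega(U)$. Set $\la^*:=\sup\{\La(z):z\in\omega(U)\}\le\la_0-\varepsilon$. If $\la^*=0$, \eqref{eq:st} holds and we are done, so from now on I assume $\la^*>0$. By Theorem~\ref{thmest}, for every $z\in\omega(U)$ the function $V_{\la^*}z$ vanishes identically on some connected component of $\Om_{\la^*}$.

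The crux of the argument is to show that every nonzero $z\in\omega(U)$ is actually an element of $E_+$. Given such a $z$, Lemma~\ref{eqc} produces $Z\in\mathcal{A}$ with $Z(\cdot,0)=z$ and $Z(\cdot,t)\in\omega(U)$ for all $t\in\R$; since $z\neq 0$, Lemma~\ref{claim4} forces $Z\not\equiv 0$. For each $t\in\R$ the slice $Z(\cdot,t)$ lies in $\omega(U)$, so Theorem~\ref{thmest} gives $V_{\la^*}Z(\cdot,t)\equiv 0$ on some connected component of $\Om_{\la^*}$. Lemma~\ref{cor:equil}, applied to $Z$ with $I=\R$ and with $\la^*$ in place of $\la_0$, then yields $Z\in E_+$, whence $z=Z\in E_+$. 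Since $\la^*>0$, at least one $z\in\omega(U)$ has $\La(z)>0$ and is in particular nonzero, so $\omega(U)\cap E_+\neq\emptyset$.

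It remains to show that $\omega(U)$ reduces to a single point. At this stage $\omega(U)\subset E_+\cup\{0\}$; the value $0$ can occur only when $f(\cdot,0,0)\equiv 0$, in which case $0\in E_0$ (by Lemmas~\ref{eqc} and \ref{claim4}). By Lemma~\ref{efi}, $E_+$ is finite and each of its elements has positive distance from $E_0\ni 0$, so $E_+\cup\{0\}$ is a finite, hence totally disconnected, subset of $C_0(\bar\Om)$. Connectedness of $\omega(U)$ together with $\omega(U)\cap E_+\neq\emptyset$ forces $\omega(U)=\{z_0\}$ for a single $z_0\in E_+$, and the a priori bound gives $\La(z_0)\le\la_0-\varepsilon<\la_0$. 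The main obstacle is the middle step: identifying a general nonzero $\omega$-limit point as a nontrivial equilibrium, which requires combining the invariance of $\omega(U)$ (Lemma~\ref{eqc}), the structural rigidity from Theorem~\ref{thmest} available at every time slice, and Lemma~\ref{cor:equil}, which upgrades persistent vanishing on a component into time-independence.
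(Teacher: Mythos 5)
Your proof is correct and follows essentially the same route as the paper: the $\la_0=0$ case via Lemma~\ref{l:dec}, then in the $\la_0>0$ case invoking Lemma~\ref{l:ref-2} to get the strict drop, Theorem~\ref{thmest} to identify $\la^*$ and the vanishing of $V_{\la^*}z$ on a component, Lemma~\ref{eqc} together with Lemma~\ref{cor:equil} to upgrade nonzero $\omega$-limit points to equilibria in $E_+$, and finally Lemma~\ref{efi} plus connectedness to reduce $\omega(U)$ to a single point with $\La(z_0)<\la_0$. The only cosmetic difference is that you spell out the totally-disconnected argument for $E_+\cup\{0\}$ a bit more explicitly, but the logic is identical.
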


\begin{proof}
By Lemma \ref{l:dec}, \eqref{eq:st} holds if $\la_0=0$. 

Assume
that $\la_0>0$ and \eqref{eq:st} does not hold, that is,
there is $z_0 \in \omega(U)$ with $\La(z_0) > 0$. 
Let  $\la^*\ge 0$ be as in Theorem \ref{thmest}. 
Then,  by Lemma \ref{l:ref-2},
 $\la^*< \la_0$ and, obviously,  $\la^* \ge \La(z_0)>0$. 
Once we know that $\la^*>0$, we can apply Lemma \ref{cor:equil} to
each nonzero entire solution in $\om(U)$ (cp. Lemma \ref{eqc}).
Indeed, any such entire solution $Z$ satisfies $Z(\cdot, t) \in \omega(U)$, 
and therefore for each $t \in \R$ one has $V_{\la^*}Z(\cdot, t) \equiv 0$
on a connected component of $\Om_{\la^*}$, as stated in Theorem
\ref{thmest}.  By Lemma \ref{cor:equil}, $\om(U)\setminus \{0\}\subset
E_+$.  Hence, by Lemma \ref{efi}, $\om(U)$ is a finite set and, as it
is connected, $\om(U)=\{z_0\}$. 
Since  $\La(z_0)>0$, we have $z_0\in E_+$ and the relations
$\la_0>\la^* \ge \La(z_0)$ complete the proof. 
\end{proof}

The above results are mainly concerned with $\om(U)$.
Next we intend to consider $\al(U)$. 
For that the following  lemma
will be useful.

\begin{lemma}\label{psu}
Let $Z \in \mathcal{A}
 \setminus \{0\}$ and $\tau\in \R$.
Set $\la_0 := \La(Z(\cdot,\tau ))$. 
Then for each $\la \in(\la_0, \ell)$ 
one has $V_{\la} Z > 0$ in $\Om_{\la} \times (\tau, \infty)$.
Moreover if $\la_0 > 0$, then either $V_{\la_0} Z > 0$ in $\Om_{\la_0} \times (\tau, \infty)$ or $Z \in E_+$. 
\end{lemma}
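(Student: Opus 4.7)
The plan is to apply the strong parabolic maximum principle to $w^\la := V_\la Z = Z(P_\la\codt,\codt) - Z$ on each connected component of $\Om_\la$, and invoke Lemmas \ref{cor:equil} and \ref{le:char}(ii) to exclude the degenerate case of the resulting dichotomy. Recall from Subsection \ref{linmove} that for each $\la\in[0,\ell)$ the function $w^\la$ is a classical solution of a linear parabolic equation $\partial_t w^\la = L^\la w^\la$ on $\Om_\la\times\R$, with $L^\la \in \cE(\beta_0, \Om_\la\times\R)$, satisfying $w^\la \geq 0$ on $\partial\Om_\la\times\R$.

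First I would establish nonnegativity of $w^\la$ on $\Om_\la \times [\tau,\infty)$ for every $\la\geq \la_0$. By the definition of $\la_0 = \La(Z(\codt,\tau))$, one has $w^\mu(\codt,\tau)\geq 0$ on $\Om_\mu$ for every $\mu>\la_0$; letting $\mu\to\la_0^+$ and using continuity of $Z(\codt,\tau)$ yields $w^{\la_0}(\codt,\tau)\geq 0$ as well. Theorem \ref{max:principle} applied with $h\equiv 0$ and zero negative initial part then gives $w^\la \geq 0$ on $\Om_\la \times [\tau,\infty)$ for each $\la \geq \la_0$.

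For the strict positivity claimed for $\la\in(\la_0,\ell)$, I would apply the strong parabolic maximum principle on each connected component $D$ of $\Om_\la$. This yields the usual dichotomy: either $w^\la > 0$ on $D\times(\tau,\infty)$, or $w^\la \equiv 0$ on $D\times I$ for some open interval $I \subset (\tau,\infty)$. In the latter case, because $Z\not\equiv 0$, Lemma \ref{cor:equil} forces $Z\in E_+$, and then Lemma \ref{le:char}(ii) says $\la_0 = \La(Z)$ is the \emph{maximal} value in $(0,\ell)$ at which $V_{\la_0} Z$ can vanish identically on a component of $\Om_{\la_0}$. This contradicts the assumption $\la>\la_0$, so only the first alternative survives, giving $V_\la Z > 0$ on $\Om_\la\times(\tau,\infty)$.

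The moreover part is the same dichotomy applied with $\la = \la_0 > 0$: either $V_{\la_0} Z > 0$ on $\Om_{\la_0}\times(\tau,\infty)$, or $V_{\la_0} Z \equiv 0$ on some component of $\Om_{\la_0}$ over an open time interval, in which case Lemma \ref{cor:equil} yields $Z\in E_+$; this time no contradiction with Lemma \ref{le:char}(ii) arises, since $\la_0$ is exactly the critical value allowed by maximality. I do not anticipate any substantive obstacle: the mild point requiring care is that the vanishing produced by the strong maximum principle occurs on an open time interval (needed to apply Lemma \ref{cor:equil}), which is immediate from the standard form of that principle.
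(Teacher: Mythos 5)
Your argument is correct and essentially identical to the paper's: both establish nonnegativity of $w^\la$ from \eqref{pos} and the maximum principle, then invoke the strong maximum principle's dichotomy on each component of $\Om_\la$, and use Lemma \ref{cor:equil} together with the maximality assertion of Lemma \ref{le:char}(ii) to rule out the degenerate case when $\la>\la_0$ while allowing it when $\la=\la_0$. The only difference is that you make explicit the continuity step giving $w^{\la_0}(\codt,\tau)\ge 0$, which the paper leaves implicit.
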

\begin{proof}
For $\la\ge \la_0$  let
$w^{\la}$ be as in \eqref{dfw}; it 
satisfies \eqref{ref},  \eqref{bcd}, and
\eqref{pos}. By the maximum principle, 
 either 
 \begin{equation}
   \label{eq:2}
   \text{$w^{\la} > 0$ in $\Om_{\la} \times (\tau, \infty)$}
 \end{equation}
 or there
 are $\ep>0$ and a connected component $D_{\la}$ of $\Om_{\la}$ such
 that 
$w^{\la}\equiv 0$ in $D_{\la} \times (\tau,\tau +\ep)$.  
The second possibility cannot hold if  $\la>\la_0$. For if it did, then,  by 
Lemma \ref{cor:equil}, $Z\in E_+$, and we would have a  contradiction
to Lemma \ref{le:char}(ii). Hence, \eqref{eq:2} holds for
$\la>\la_0$. If $\la=\la_0$ and $\la_0>0$, then 
either \eqref{eq:2} holds or Lemma \ref{cor:equil} implies $Z\in E_+$. 
The lemma is proved.
\end{proof}

%%%%%%%%%%%%%%

\begin{lemma}\label{prs}
Let $U \in \mathcal{A}$ and assume that
$\mu_0 := \La (U(\cdot, \tau)) > 0$ for some $\tau \in \R$. Then
$\als(U) = \{z\}$ for some $z\in E$,
and either $z \equiv 0$,  or 
$z \in E_+$ and  $\La(z) \geq \mu_0$. 
\end{lemma}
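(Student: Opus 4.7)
The plan is to show $\als(U)\subset\{0\}\cup E_+$; since $E_+$ is finite and each element is at positive distance from $E_0\ni 0$ by Lemma \ref{efi}, this inclusion together with the connectedness of $\als(U)$ forces $\als(U)$ to be a singleton. To set up, let $\La_-:=\lim_{t\to-\infty}\La(U(\cdot,t))$, which exists by the monotonicity of Lemma \ref{l:dec} and satisfies $\La_-\geq\mu_0>0$; by \eqref{limoa}, $\La(z)\leq\La_-$ for every $z\in\als(U)$. Theorem \ref{thmsurvey} rules out strictly positive elements of $\als(U)$, since any such element would force $\La(U(\cdot,t))\equiv 0$, contradicting $\mu_0>0$.

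The central step is to prove that every $z\in\als(U)\setminus\{0\}$ lies in $E_+$. Given such $z$, by Lemma \ref{eqc} pick an entire solution $Z\in\mathcal{A}$ with $Z(\cdot,0)=z$ and $Z(\cdot,t)\in\als(U)$ for every $t\in\R$; Lemma \ref{claim4} ensures $Z(\cdot,t)\not\equiv 0$ for every $t$. Arguing by contradiction, assume $Z\notin E_+$. First I would rule out $\La(Z(\cdot,t_0))=0$ for any $t_0$: by Lemma \ref{psu}, $V_\la Z>0$ on $\Om_\la\times(t_0,\infty)$ for every $\la>0$, so with $\tilde z:=Z(\cdot,t_0+1)\in\als(U)$ and any fixed $\la_0\in(0,\mu_0)$, Lemma \ref{l:ref-2-moved}(ii) delivers $\La(U(\cdot,s))\leq\la_0-\varepsilon_0<\mu_0$ for every $s$, contradicting $\La(U(\cdot,\tau))=\mu_0$. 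Hence $\la_0(\tau):=\La(Z(\cdot,\tau))>0$ for every $\tau$, and the second assertion of Lemma \ref{psu} (combined with $Z\notin E_+$) yields strict positivity $V_{\la_0(\tau)}Z>0$ on $\Om_{\la_0(\tau)}\times(\tau,\infty)$. Setting $\tilde z:=Z(\cdot,\tau+1)\in\als(U)$, we obtain $V_\la\tilde z>0$ on $\Om_\la$ for each $\la\in[\la_0(\tau),\ell)$, so Lemma \ref{l:ref-2-moved}(ii) gives $\La_-\leq\la_0(\tau)-\varepsilon_0<\la_0(\tau)=\La(Z(\cdot,\tau))$; but $Z(\cdot,\tau)\in\als(U)$ forces $\La(Z(\cdot,\tau))\leq\La_-$, a contradiction. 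Therefore $z=Z\in E_+$.

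We conclude $\als(U)=\{z\}$ for some $z\in\{0\}\cup E_+$ (when $z=0$, Lemma \ref{claim4} applied to an entire orbit through $0$ confirms $0\in E$). If $z\in E_+$, I verify $\La(z)\geq\mu_0$ as follows: for each $\la\in(\La(z),\ell)$, Lemma \ref{le:char}(ii) asserts $V_\la z\not\equiv 0$ on any connected component of $\Om_\la$, and the strong maximum principle applied to the linear elliptic equation $LV_\la z=0$ (with $V_\la z\geq 0$) yields $V_\la z>0$ on $\Om_\la$; Lemma \ref{l:ref-2-moved}(ii) applied to $z\in\als(U)$ with this value as $\la_0$ gives $\La_-\leq\la-\varepsilon_0<\la$. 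Letting $\la\searrow\La(z)$ yields $\La_-\leq\La(z)$, and with $\La(z)\leq\La_-$ we conclude $\La(z)=\La_-\geq\mu_0$.

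The main obstacle is the contradiction argument in the second paragraph: one must first exclude the degenerate case $\La(Z(\cdot,t))=0$ in order to activate the second assertion of Lemma \ref{psu}, then combine (i) the forward-time strict positivity of $V_{\la_0(\tau)}Z$ on $(\tau,\infty)$ provided by that lemma with (ii) the backward-time invariance $Z(\cdot,\tau)\in\als(U)$, and finally (iii) use Lemma \ref{l:ref-2-moved}(ii) to obtain a global-in-time bound on $\La(U(\cdot,\cdot))$ that clashes with the membership $Z(\cdot,\tau)\in\als(U)$. This circuit of forward positivity followed by invariance is what forces $Z$ to be an equilibrium rather than a genuine orbit.
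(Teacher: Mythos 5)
Your proof is correct and follows essentially the same route as the paper's, using Lemma \ref{eqc}, both parts of Lemma \ref{psu}, and Lemma \ref{l:ref-2-moved}(ii), and then Lemma \ref{efi} together with connectedness to pin down the singleton. The main organizational difference is that the paper first establishes the clean identity $\La(z_0)=\lim_{t\to-\infty}\La(U(\cdot,t))$ for every nonzero $z_0\in\al(U)$ (using only the first part of Lemma \ref{psu} and Lemma \ref{l:ref-2-moved}(ii)), which then both supplies the positivity $\La(Z(\cdot,\tau))>0$ needed for the second assertion of Lemma \ref{psu} and immediately yields $\La(z)\ge\mu_0$; you instead fold the zero case of $\La(Z(\cdot,t_0))$ into the contradiction argument and re-derive $\La(z)\ge\mu_0$ at the end via an elliptic strong-maximum-principle step, both of which are valid but slightly redundant (the appeal to Theorem \ref{thmsurvey} in the opening paragraph is likewise unnecessary, since your main argument already excludes strictly positive elements of $\al(U)$).
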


\begin{proof}
By Lemma \ref{l:dec},  $\sigma :=\lim_{\tau\to-\infty}\La (U(\cdot,
\tau))\ge \mu_0>0$ and 
\begin{equation} \label{eq:7}
  \sigma\ge \La(z)\quad(z\in \al(U)).
\end{equation}
We claim  that
\begin{equation}
  \label{eq:8}
   \La(z)= \sigma\quad(z\in \al(U)\setminus\{0\}).
\end{equation}
To prove this, take any $z_0\in \al(U)\setminus \{0\}$. 
There is $Z\in \cA$ with $Z(\codt,0)=z_0$ 
and $Z(\cdot,t)\in \al(U)$ for all $t\in\R$ 
(cp. Lemma \ref{eqc}). Then $\La(Z(\cdot,1))\le \La(z_0)$,
by Lemma \ref{l:dec}. By Lemma \ref{psu},  we have
$V_{\la} Z(\cdot,1)>0$ in $\Om_{\la}$ for any 
$\la \in (\La(z_0),\ell)$.
Applying Lemma \ref{l:ref-2-moved}(ii) with $z=Z(\cdot,1)$,
we obtain $\La(U(\codt,t))<\la$  for each $t\in \R$, 
hence $\sigma\le \la$. Since $\la \in (\La(z_0),\ell)$ was
arbitrary, it follows that $\sigma\le \La(z_0)$. Combined with
\eqref{eq:7}, this gives $\La(z_0)= \sigma$. 
Hence  \eqref{eq:8} is proved. 

We next prove that $\al(U)\setminus \{0\}\subset E_+$. Take again 
any $z_0\in \al(U)\setminus \{0\}$
and let $Z$ have the same meaning as in the previous paragraph. 
Observe  that $Z(\cdot,1)\not\equiv  0$ (otherwise,
$0\equiv Z(\cdot,0)\equiv  z_0$ by Lemma \ref{claim4}).  Thus, 
 \eqref{eq:8}  gives $\La(Z(\cdot,1))=\sigma>0$. 
Assume $Z(\cdot,1)\not\in E_+$ and set $\la_0 := \La(z_0)$.   
By Lemma \ref{psu}, for each $\la \in [\la_0, \ell)$ one has
$V_{\la}Z(\cdot, 1) > 0$  
in $\Om_{\la}$, and consequently
we can apply Lemma \ref{l:ref-2-moved}(ii) with $z=Z(\cdot,1)$. This yields $\varep_0>0$ such that
$\La(U(\codt,t))\le \la_0-\varep_0$  for each $t\in \R$ and therefore
$\sigma\le \La(z_0)-\varep_0$, in  contradiction to
\eqref{eq:8}. This contradiction shows that 
$Z(\cdot,1)\in E_+$. Thus  $z_0=  Z(\cdot,0)= Z(\cdot,1)\in E_+$,
as desired. 

Once we know that $\al(U)\setminus \{0\}\subset E_+$, Lemma
\ref{efi} implies that $\al(U)$ is a finite set. As it is connected, 
$\al(U)$ consists of a single equilibrium $z$ and either $z\in E_+$ or
$z\equiv 0$. 
\end{proof}

The next lemma treats the case  $\al(U)=\{0\}$.

\begin{lemma}\label{nul}
If $U\in \cA$ and $\al(U)=\{0\}$, then
$\La (U(\cdot, t)) = 0$ for each $t\in \R$.
\end{lemma}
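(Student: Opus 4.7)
The strategy is proof by contradiction: assume $\La(U(\cdot,\tau_0))>0$ for some $\tau_0\in\R$. Lemma \ref{l:dec} then gives $\sigma:=\lim_{t\to-\infty}\La(U(\cdot,t))\ge \La(U(\cdot,\tau_0))>0$. Since $\al(U)=\{0\}$ and $\al(U)\subset\cA$ by Lemma \ref{eqc}, the constant $0$ is an entire solution, so $f(\cdot,0,0)\equiv 0$; via the Hadamard formula $U$ then solves a linear equation $U_t=L(x,t)U$ with $L\in\cE(\beta_0,\Om\times\R)$. As $t\to-\infty$, the uniform decay $U(\cdot,t)\to 0$ combined with interior parabolic regularity yields $U(\cdot,t)\to 0$ in $C^2_{\rm loc}(\Om)$, so the coefficients of $L(x,t)$ converge to those of the autonomous linearization $L_0:=\Delta+b_0(x)\cdot\nabla+c_0(x)$ of $f$ at $u=0$. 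By (F2) and the evenness of $f$ in $p_1$, both $b_0,c_0$ are independent of $x_1$ and $(b_0)_1\equiv 0$, so $L_0$ commutes with every reflection $P_\lambda$.

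The plan is to produce, via a normalization--compactness argument, a positive, globally bounded entire solution $V^\ast$ of $V_s=L_0V$, which plays the role of the missing element $z_0\in\al(U)\setminus\{0\}$ in the argument of Lemma \ref{prs}. For $t_n\to-\infty$, set $M_n:=\|U(\cdot,t_n-1)\|_\infty$ and $V_n(x,s):=U(x,s+t_n)/M_n$; each $V_n$ solves a linear equation with coefficients bounded by $\beta_0$ converging to those of $L_0$, and the forward maximum principle yields $\|V_n(\cdot,s)\|_\infty\le e^{\beta_0(s+1)}$ on $[-1,1]$. The rescaled $L^p$-estimate \eqref{lpest} and H\"older bound \eqref{eq:hol1U} then provide, along a subsequence, $V_n\to V^\ast$ in $C(\bar\Om\times[-1,1])\cap C^1_{\rm loc}(\Om\times(-1,1))$, where $V^\ast$ is a classical nonnegative solution of $V^\ast_s=L_0 V^\ast$ with $\|V^\ast(\cdot,-1)\|_\infty=1$, and the strong maximum principle gives $V^\ast>0$ on $\Om\times(-1,1]$. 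A diagonalization over expanding intervals $[-T_k,T_k]$, with $M_n$ re-chosen consistently with the principal-eigenvalue decay rate of $L_0$, extends $V^\ast$ to a positive, globally bounded entire solution of $V_s=L_0V$, and Krein--Rutman for $L_0$ places a scalar multiple of the positive principal eigenfunction $\phi_1$ of $L_0$ into $\al(V^\ast)$. Theorem \ref{thmsurvey} then applies (the linear equation trivially satisfies (G1)--(G3)) and forces $V^\ast(\cdot,s)$ to be even in $x_1$ and strictly decreasing in $x_1>0$ for every $s\in\R$.

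The contradiction now follows: by $C^1_{\rm loc}$-convergence $V_n(\cdot,0)\to V^\ast(\cdot,0)$, the strict interior monotonicity of $V^\ast(\cdot,0)$ in $x_1$, and Hopf-type boundary control (using condition (A) together with standard boundary H\"older estimates), $V_n(\cdot,0)$ is strictly decreasing in $x_1>0$ for all sufficiently large $n$, giving $\La(V_n(\cdot,0))=0$. By the scale-invariance of $\La$, $\La(U(\cdot,t_n))=\La(V_n(\cdot,0))=0$ for all large $n$, contradicting $\La(U(\cdot,t_n))\to\sigma>0$. The main obstacle is the construction of $V^\ast$ as a genuinely \emph{bounded} positive entire solution of $V_s=L_0V$ whose $\alpha$-limit set contains a strictly positive function, so that Theorem \ref{thmsurvey} can be legitimately invoked; this requires a principal-eigenvalue-guided choice of the normalization $M_n$ together with a backward Harnack analysis of the positive solution $U$ near the equilibrium $0$, and is the technically most delicate step of the proof.
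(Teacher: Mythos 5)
Your high-level idea — renormalize the decaying solution so that a strictly positive limit appears in the $\alpha$-limit set, then invoke Theorem~\ref{thmsurvey} — is the same as the paper's, but your execution diverges in ways that create genuine gaps.

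The central difficulty is that you pass to the \emph{autonomous} linearization $L_0$ at $u=0$ and try to construct a bounded positive entire solution $V^\ast$ of $V_s=L_0V$ by translating-and-rescaling $U$. This has two problems. First, the convergence of the Hadamard coefficients of $L(x,t)$ to those of $L_0$ requires continuity of $\partial_u f$, $\partial_p f$ at the origin, which (F1) does not assert (it only gives existence and boundedness of the derivatives and local H\"older continuity of $f$ itself). Second, extracting from $V_n$ an entire solution of $V_s=L_0V$ that is \emph{globally bounded} requires matching the exponential decay rate of $U$ exactly to the principal eigenvalue of $L_0$; you acknowledge this as the most delicate step but do not supply it, and for a nonself-adjoint nonautonomous $L$ on an irregular domain it is not obvious. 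The paper sidesteps both issues by \emph{not} passing to an autonomous limit: it keeps the nonautonomous linear equation $U_t=L(x,t)U$, uses H\'uska's theory of principal Floquet bundles \cite{Huska:nondiv} to identify $U$ (up to a scalar) with the positive principal solution, normalizes $U$ itself by a smooth $\gamma(t)\asymp\|U(\cdot,t)\|_\infty$ with bounded logarithmic derivative, and observes that $Z=U/\gamma$ solves a problem of type \eqref{tdc} with a nonlinearity $g$ still satisfying (G1)--(G3) and has $\inf_t Z(x,t)>0$ at every $x\in\Om$.

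The final step of your argument also has a gap: from $V^\ast(\cdot,0)$ strictly decreasing in $x_1>0$ you infer, via "$C^1_{\rm loc}$-convergence plus Hopf-type boundary control under (A)," that $V_n(\cdot,0)$ is strictly decreasing in $x_1>0$ for large $n$, hence $\La(V_n(\cdot,0))=0$. This does not follow. Condition (A) only supplies boundary H\"older estimates, not a Hopf lemma; on a general domain satisfying (D1), (D2), (A) there need not be any gradient control at $\partial\Om$. Moreover $\partial_{x_1}V^\ast$ vanishes on $H_0\cap\Om$ by symmetry, so even in the interior, $C^1_{\rm loc}$-closeness to $V^\ast$ does not force $V_n(\cdot,0)$ to be monotone near $H_0$ without a second-order (Serrin-corner-type) argument that again demands boundary regularity you do not have. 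The paper avoids this propagation step entirely: Theorem~\ref{thmsurvey} is applied directly to $Z$, giving $\La(Z(\cdot,t))=0$ for every $t$, and $\La(U(\cdot,t))=\La(Z(\cdot,t))$ by scale invariance of $\La$ — no approximation-then-propagation is needed. I would recommend restructuring along the paper's route: normalize $U$ itself, verify (G1)--(G3) for the transformed nonlinearity, establish the uniform lower Harnack bound via \cite{Huska:nondiv}, and apply Theorem~\ref{thmsurvey} to the normalized solution.
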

We postpone the proof of this lemma until the next subsection. 

We are
now ready to complete the proof  Theorem \ref{thm2}.

\begin{proof}[Proof of Theorem \ref{thm2}] 
It is obvious that at most one of the statements (i)-(iv) of 
Theorem \ref{thm2} can hold.

If  $\al(U)=\{0\}$, then Lemma \ref{nul} says that statement (i) holds. 

Assume now that   $\al(U)\ne\{0\}$ and none of the
  statements (i), (ii) holds, that is,
 $U \not \in E_+$ and there is $\tau \in \R$ 
such that $\mu_0=\La(U(\cdot, \tau))  > 0$. 
Then,  Lemmas \ref{ols}, \ref{prs} imply that one of the statements 
(iii), (iv) holds.  

The conclusion concerning the case $f(\cdot, 0, 0) \ge 0$ follows from
the comparison principle, as already explained in Remark \ref{rmtothm2}(c).
\end{proof}

\subsection{Proof of Lemma \ref{nul}}
\label{sec:nul}
Throughout this subsection we assume that $U\in \cA$ and 
$\al(U)=\{0\}$. 
Lemmas  \ref{eqc} and \ref{claim4},
imply that $0\in E$ and  $f(\cdot, 0, 0) \equiv 0$.
The conclusion of Lemma \ref{nul} trivially holds true 
if  $U \equiv 0$, thus in the following we assume 
$U \not \equiv 0$. By Lemma  \ref{claim4}, $U(\cdot,t)\not\equiv 0$
for any $t\in \R$, hence, by the strong comparison principle,
 $U(\cdot,t)> 0$ in $\Om$ for each  $t\in \R$.

Now using $f(\cdot, 0, 0) \equiv 0$, we have  
$U_t=\De U+f(x,t,U,\grad U)-f(x,t,0,0)$. Therefore, by the
Hadamard formula,
$U$ is a bounded positive entire solution of  
a linear problem
\begin{equation}\label{eps}
\begin{aligned}
v_t &= \Delta v + L(x, t)v,& \quad &(x, t) \in \Om\times\R \,,\\
v &= 0,& \quad &(x, t) \in \partial \Om\times\R \,,
\end{aligned}
\end{equation}
where $L \in \mathcal{E}( \be_0, \Om \times \R)$ and $\be_0$ is as in
(F1). We use this
observation below to control the decay of $U(\cdot,t)$, as
$t\to\infty$. Then we find a suitable transformation  of $U$
which is uniformly positive. This will allow us to 
apply Theorem \ref{thmsurvey} to conclude that 
$\LA(U(\codt,t))=0$ for all $t$.

Below, $C^*, C_1,C_2,...$ denote positive constants independent of $t$
and $x$.

\begin{lemma}
  \label{le:est} One has
\begin{align}\label{dno}
0 < C_1 \leq \frac{\|U(\cdot, t + \tau)\|_{L^\infty(\Om)}}{\|U(\cdot, t)\|_{L^\infty(\Om)}} &\leq C_2 < \infty \qquad 
(t \in \R, \tau \in [0, 1]) \,
\\
\intertext{and} 
    \label{eq:harn}
    \inf_{t\in \R}\frac{U(x, t)}{\|U(\cdot,
      t)\|_{L^\infty(\Om)}}&>0\qquad(x\in \Om). 
  \end{align}
\end{lemma}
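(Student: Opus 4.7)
The plan is to obtain the upper bound in \eqref{dno} from a standard maximum-principle argument, and to deduce both the lower bound in \eqref{dno} and \eqref{eq:harn} by rescaling $U$ so that its spatial maximum equals one, after which boundary H\"older estimates and the interior parabolic Harnack inequality apply with $t$-independent constants.

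For the upper bound, I would fix $t\in\R$, set $M:=\|U(\cdot,t)\|_{L^\infty(\Om)}$, and consider $V(x,s):=e^{\be_0(s-t)}M-U(x,s)$ on $\Om\times[t,t+1]$. A short calculation using $|c|\le\be_0$ shows that $V$ is a supersolution of $v_s=\De v+\sum b_k v_{x_k}+cv$ with $V(\cdot,t)\ge 0$ in $\Om$ and $V\ge 0$ on $\hr\times[t,t+1]$, so Theorem \ref{max:principle} yields $V\ge 0$. Hence $\|U(\cdot,t+\tau)\|_{L^\infty(\Om)}\le e^{\be_0\tau}M\le e^{\be_0}M$ for $\tau\in[0,1]$, which gives $C_2$.

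For the lower bound I would introduce the rescaled functions $\tilde U_t(y,s):=U(y,t+s)/M(t)$. They satisfy the same linear problem \eqref{eps} (with the coefficients merely shifted in time, hence still in $\cE(\be_0,\Om\times\R)$) and have $\|\tilde U_t(\cdot,0)\|_{L^\infty(\Om)}=1$. The boundary H\"older estimate from \cite[Proposition 2.7]{P:est} applied to the linear equation produces H\"older constants depending only on $N$, $\be_0$, $\diam\Om$ and the density constant $\varsi$ from (A); together with the upper bound already proved, this makes $\{\tilde U_t\}_{t\in\R}$ \emph{uniformly} H\"older continuous and uniformly bounded on $\bar\Om\times[0,1]$. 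Consequently any maximizer $\tilde x_t\in\Om$ of $\tilde U_t(\cdot,0)$ satisfies $\dist(\tilde x_t,\hr)\ge r_0$ for some $r_0>0$ independent of $t$. Applying the interior parabolic Harnack inequality to the positive solution $\tilde U_t$ on the cylinder $B(\tilde x_t,r_0)\times(0,r_0^2)\subset\Om\times(0,1)$ produces $t$-uniform constants $0<\sigma_1<\sigma_2\le 1$ and $C_H>0$ with $\inf_{B(\tilde x_t,r_0/2)}\tilde U_t(\cdot,s)\ge 1/C_H$ for every $s\in[\sigma_1,\sigma_2]$; in particular $M(t+s)\ge M(t)/C_H$ for such $s$. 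Combining this estimate (applied to a suitable finite sequence of base times) with the already proven upper bound then delivers a uniform $C_1>0$ such that $M(t+\tau)\ge C_1M(t)$ for every $\tau\in[0,1]$.

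For \eqref{eq:harn}, fix $x_0\in\Om$. Every maximizer $\tilde x_t$ lies in the fixed compact set $K_0:=\{x\in\Om:\dist(x,\hr)\ge r_0\}$, so there is a Harnack chain of balls inside $\Om$, with uniformly bounded length and radii, connecting $x_0$ to an arbitrary point of $K_0$. Iterating the interior parabolic Harnack inequality along such a chain produces $\tau(x_0)>0$ and $c(x_0)>0$ independent of $t$ with $\tilde U_t(x_0,\tau(x_0))\ge c(x_0)$, i.e.\ $U(x_0,t+\tau(x_0))\ge c(x_0)M(t)$ for all $t\in\R$. Writing $s=t+\tau(x_0)$ and using the upper bound in \eqref{dno} (iterated if necessary) to bound $M(s)/M(s-\tau(x_0))$ from above converts this into $U(x_0,s)\ge c'(x_0)M(s)$ for all $s\in\R$, which is \eqref{eq:harn}. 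The main obstacle throughout is preventing the Harnack constants from degenerating as $M(t)\to 0$ (which is permitted because $\al(U)=\{0\}$); the rescaling to $\tilde U_t$ removes exactly this difficulty, since the normalized family has a fixed spatial maximum, forcing the maximizers to stay a uniform distance from $\hr$ regardless of how small $M(t)$ may be.
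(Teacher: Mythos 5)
Your derivation of the upper bound $C_2$ in \eqref{dno} is correct: the function $V(x,s):=e^{\be_0(s-t)}M-U(x,s)$ is indeed a supersolution with nonnegative initial and boundary data, so the comparison principle gives $M(t+\tau)\le e^{\be_0\tau}M(t)$. The rest of the argument, however, has a circularity that I do not see how to remove as written.

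The crucial claim is that the rescaled family $\tilde U_t(y,\sigma)=U(y,t+\sigma)/M(t)$ is uniformly H\"older continuous on $\bar\Om\times[0,1]$, and in particular at $\sigma=0$, so that each maximizer of $\tilde U_t(\cdot,0)$ stays a fixed distance from $\partial\Om$. But interior and boundary H\"older estimates for a linear parabolic equation only reach the \emph{final} portion of a space--time cylinder; near the \emph{initial} time they require either regularity of the initial data or $L^\infty$ control on an earlier time interval. Thus to bound the H\"older seminorm of $\tilde U_t(\cdot,0)$ in terms of quantities independent of $t$, you would need a bound on $\|\tilde U_t\|_{L^\infty(\Om\times(-\de,0))}=\sup_{\sigma\in(0,\de)}M(t-\sigma)/M(t)$ for some $\de>0$. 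The upper bound you have proved goes the wrong way: it yields $M(t-\sigma)\ge M(t)/C_2$ (a lower bound on past values), not an upper bound. An upper bound on $M(t-\sigma)/M(t)$ is precisely $M(t)\ge C_1 M(t-\sigma)$, i.e.\ the very lower bound you are trying to establish. The same difficulty reappears no matter where you place the normalization time. Applying the H\"older estimate only at times $\sigma\ge\de>0$ avoids the circularity but leaves you with no control on $\max_\Om\tilde U_t(\cdot,\de)$, so the maximizer could still be forced arbitrarily close to $\partial\Om$ and the Harnack chain does not start. This is not a minor technicality: for a solution defined only on a half-line it is genuinely possible for the $L^\infty$ norm to drop by an arbitrarily large factor over unit time (take a thin bump near $\partial\Om$ as initial data for the heat equation), so the entireness of $U$ must enter in an essential way. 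The paper short-circuits all of this by invoking the theory of principal Floquet bundles: by \cite[Theorem 5.5 and Proposition 2.5]{Huska:nondiv}, there is a distinguished positive entire solution $\phi$ of the linear problem satisfying \eqref{dno}, \eqref{eq:harn} with its own constants, and every bounded positive entire solution of the same linear problem is a scalar multiple of $\phi$; hence $U=c\phi$, and both estimates transfer to $U$. Establishing the uniform-in-time Harnack properties of $\phi$ is exactly the hard content you would have to reproduce to make your rescaling argument work.
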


\begin{proof}
As remarked above, 
$U$ is a positive bounded  solution of  a  linear problem \eqref{eps}
with
 $L \in \mathcal{E}( \be_0, \Om \times \R)$. 
  By \cite[Theorem 5.5]{Huska:nondiv}, 
there exists a positive solution $\phi$ of \eqref{eps}
satisfying the following two conditions
\begin{align}\label{pso}
0 < C_1 \leq \frac{\|\phi(\cdot, t + \tau)\|_{L^\infty(\Om)}}{\|\phi(\cdot, t)\|_{L^\infty(\Om)}} &\leq C_2 < \infty \qquad 
(t \in \R, \tau \in [0, 1]) \,,\\
 \inf_{t\in \R}\frac{\phi(x, t)}{\|\phi(\cdot,
      t)\|_{L^\infty(\Om)}}&>0\qquad(x\in \Om).\notag
\end{align} 
Note, in particular that \eqref{pso} implies 
\begin{equation}\label{eso}
\|\phi(\cdot, t)\|_{L^\infty(\Om)} \leq C_3 e^{\gamma |t|} \qquad (t \in \R)
\end{equation}
for some $C_3, \gamma > 0$. By 
\cite[Proposition 2.5]{Huska:nondiv},  
the positive solution satisfying \eqref{eso} is unique
up to scalar multiples. Since $U$ is bounded and positive, $U =
c\phi$ for some $c > 0$. 
This implies \eqref{dno}, \eqref{eq:harn}.
\end{proof}

\begin{lemma}
  \label{le:ga}
There exists a smooth function $\gamma: \R \to (0,\infty)$ such that 
\begin{align}\label{dbc}
\frac{|\gamma'(t)|}{\gamma(t)} &\leq C^\ast < \infty \qquad (t \in \R), \\ 
\label{bud}
0 < C_4 &\leq \frac{\|U(\cdot, t)\|_{L^\infty(\Om)}}{\gamma(t)} \leq C_5 < \infty \qquad (t \in \R) \,.
\end{align}
\end{lemma}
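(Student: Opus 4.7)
The plan is to construct $\gamma$ as the exponential of a mollification of $\log\|U(\cdot,t)\|_{L^\infty(\Om)}$. First, set $m(t):=\|U(\cdot,t)\|_{L^\infty(\Om)}$; by the continuity of $U$ on $\bar\Om\times\R$ together with $U(\cdot,t)>0$ in $\Om$, the function $m$ is continuous and strictly positive, so $\phi(t):=\log m(t)$ is a continuous real-valued function. The estimate \eqref{dno} of Lemma \ref{le:est} then translates into the quantitative oscillation bound
\begin{equation*}
|\phi(t+\tau)-\phi(t)|\le M\quad(t\in\R,\ \tau\in[-1,1]),
\end{equation*}
with $M:=\max(|\log C_1|,|\log C_2|)$; one gets $\tau\in[0,1]$ directly from \eqref{dno}, and the case $\tau\in[-1,0]$ follows by applying \eqref{dno} at $t'=t+\tau$.

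Next, pick a nonnegative $\rho\in C^\infty_c(\R)$ with $\operatorname{supp}\rho\subset[-1,1]$ and $\int\rho=1$, and define
\begin{equation*}
\gamma(t):=\exp\bigl((\rho*\phi)(t)\bigr)=\exp\!\left(\int_{-1}^{1}\rho(s)\phi(t-s)\,ds\right).
\end{equation*}
Since $\phi$ is continuous, $\rho*\phi\in C^\infty(\R)$, so $\gamma$ is smooth and strictly positive. For \eqref{bud}, write
\begin{equation*}
\log\gamma(t)-\phi(t)=\int_{-1}^{1}\rho(s)\bigl(\phi(t-s)-\phi(t)\bigr)\,ds,
\end{equation*}
whose absolute value is $\le M$ by the oscillation bound, yielding $e^{-M}\le \gamma(t)/m(t)\le e^{M}$.

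For \eqref{dbc}, differentiate under the integral and use $\int\rho'=0$:
\begin{equation*}
\frac{\gamma'(t)}{\gamma(t)}=(\rho*\phi)'(t)=\int_{-1}^{1}\rho'(s)\bigl(\phi(t-s)-\phi(t)\bigr)\,ds,
\end{equation*}
where the subtraction of the constant $\phi(t)$ is legitimate because $\int\rho'=0$. Taking absolute values and again invoking the oscillation bound gives $|\gamma'(t)/\gamma(t)|\le M\int_{-1}^{1}|\rho'(s)|\,ds=:C^\ast$, independent of $t$.

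There is no serious obstacle here: once \eqref{dno} is in hand, the construction is a standard mollification argument, and the only minor point to verify is that the oscillation estimate extends to negative $\tau$, which follows by translating the base point. The only ingredient from the preceding analysis that is really being used is the two-sided, translation-invariant multiplicative bound on $m(t+\tau)/m(t)$ for $\tau\in[0,1]$, which Lemma \ref{le:est} supplies.
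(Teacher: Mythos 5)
Your proof is correct, and it takes a genuinely different (though related) route from the paper's. The paper follows \cite[Proof of Lemma 6.3]{p-H:rn}: it reads off from \eqref{dno} that the sequence $\eta(k):=\log\|U(\cdot,k)\|_{L^\infty(\Om)}$, $k\in\Z$, has bounded increments, interpolates these integer values by a smooth function $\eta:\R\to\R$ with $|\eta'|\le C_6$, and sets $\gamma:=e^\eta$; \eqref{dbc} is immediate, and \eqref{bud} is obtained by comparing $\gamma(t)$ with $\gamma(k)=\|U(\cdot,k)\|_{L^\infty(\Om)}$ for $t\in[k,k+1]$ via \eqref{dno}. You instead mollify $\phi:=\log\|U(\cdot,\cdot)\|_{L^\infty(\Om)}$ on the whole line, using the translation-invariant oscillation bound $|\phi(t+\tau)-\phi(t)|\le M$ for $|\tau|\le1$ supplied by \eqref{dno}. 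This is a clean alternative: it avoids choosing a discrete skeleton and an interpolant, and the bound on $\gamma'/\gamma$ falls out of $\int\rho'=0$ combined with the oscillation estimate, with no further bookkeeping. Two small points worth making explicit, both of which you in effect handle: the continuity of $t\mapsto\|U(\cdot,t)\|_{L^\infty(\Om)}$ is guaranteed by the uniform H\"older estimate \eqref{eq:hol1U}, and its strict positivity is established just before Lemma \ref{le:est} (from $f(\cdot,0,0)\equiv0$ and the strong comparison principle), so $\phi$ is indeed a continuous real-valued function. Also, the differentiation step is best justified by writing $(\rho*\phi)(t)=\int\rho(t-s)\phi(s)\,ds$ and differentiating the smooth kernel, since $\phi$ itself need not be differentiable; your formula $(\rho*\phi)'=\rho'*\phi$ is exactly this. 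In short: a valid proof with the same conclusion, relying on the same input \eqref{dno}, but organized via mollification rather than integer interpolation.
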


\begin{proof}
We follow \cite[Proof of Lemma 6.3]{p-H:rn}.
By \eqref{dno}, 
\begin{equation*}
|\log \|U(\cdot, k + 1)\|_{L^\infty(\Om)} - \log \|U(\cdot, k)\|_{L^\infty(\Om)}| \leq \frac{C_6}{2} \qquad (k \in \Z)\,.
\end{equation*}
It is therefore easy to find a smooth function $\eta : \R \to \R$ such that 
\begin{equation*}
\eta(k) := \log \|U(\cdot, k)\|_{L^\infty(\Om)} \quad (k \in \Z), \qquad |\eta'(t)| \leq C_6 \qquad (t \in \R)\,.
\end{equation*} 
Set $\gamma(t) := e^{\eta(t)}$. Then
\begin{equation*}
|\gamma'(t)| = \gamma(t) |\eta'(t)| \leq C_6 \gamma(t) \qquad (t \in \R)\,. 
\end{equation*}
Since $|\eta(t + \tau) - \eta(t)|
\leq C_6$ for each $\tau \in [0, 1]$,  we have
\begin{equation}\label{prf}
e^{-C_6} \leq e^{\eta(t) - \eta(t + \tau)} =  \frac{\gamma(t)}{\gamma(t + \tau)} \leq e^{C_6} \qquad (t \in \R, \tau \in[0, 1]) \,.  
\end{equation}
From \eqref{dno} and \eqref{prf} we next obtain 
\begin{equation*}
C_1 e^{-C_6} \leq \frac{\gamma(k)}{\gamma(t)} \frac{\|U(\cdot, t)\|_{L^\infty(\Om)}}{\|U(\cdot, k)\|_{L^\infty(\Om)}}  
\leq C_2e^{C_6} \qquad (t \in [k, k+1], k \in \Z) \,.
\end{equation*}
Since $\gamma(k) = \|U(\cdot, k)\|_{L^\infty(\Om)}$ for each $k \in
\Z$, 
\eqref{bud} follows.
\end{proof}

With $\gamma$ as in 
Lemma \ref{le:ga},
 set $Z(x, t) := \frac{U(x, t)}{\gamma(t)}$. Then 
\begin{equation*}
Z_t = \frac{U_t}{\gamma(t)} - \frac{\gamma'(t) U}{\gamma^2(t)} 
= \Delta Z + \frac{1}{\gamma(t)}f(x, \gamma(t)Z, \gamma(t)\grad Z) -
\frac{\gamma'(t)}{\gamma(t)} Z. 
\end{equation*}
Hence, $Z$ is a solution of the problem
  \begin{alignat}{2} \label{eq0Z}
Z_t &= \Delta Z + g(t, x, Z, \nabla Z),& \qquad &(x, t) \in \Om
\times \R\,, \\  
U &= 0, & \qquad &(x, t) \in \partial\Om \times (0, \infty)\,, \label{bc0Z}
\end{alignat}
where 
\begin{multline*}
g(t, x, u, p) := \frac{1}{\gamma(t)}f(x, \gamma(t)u, \gamma(t) p) - \frac{\gamma'(t)}{\gamma(t)} u \\
(x \in \Om, t \in \R, u \in [0,\infty), p \in \R^N) \,.
\end{multline*}
We verify that $g$ satisfies conditions (G1)-(G3) of Section \ref{linmove}.
Since $f$ is independent of $x_1$ and even in $p_1$, so is $g$ and
(G2) is satisfied.
Clearly, $g$ is continuous.  From (F1) and \eqref{dbc} we have 
\begin{multline*}
\sup_{x \in \bar{\Om},\, t \in \R}|g(x, t, u, p) - g(x, t, u', p')| \\
\leq  \sup_{x \in \bar{\Om},\, t \in \R} 
\frac{1}{\gamma(t)} |f(x, \gamma(t)u, \gamma(t)p) - f(x, \gamma(t)u', \gamma(t)p')| + \left| \frac{\gamma'(t)}{\gamma(t)}\right||u - u'| \\
\leq  \tilde{\beta}_0 (|u - u'| + |p - p'|)  \ \quad (u, u' \in \R_{+}, p, p' \in \R^N)\,,
\end{multline*}
for some $\tilde{\beta}_0 > 0$. Thus $g$ satisfies (G1).
Also, since $f(\cdot, 0, 0) \equiv 0$,
\begin{equation*}
g(t, x, 0, 0) = 0 \qquad ((x, t) \in \Om\times \R) \,, 
\end{equation*}
hence (G3) is satisfied as well.

Now, 
 $\|Z\|_{L^\infty(\Om \times\R)} \leq C_5$, by  \eqref{bud}. Hence
 $Z$ is a positive bounded  entire solution of \eqref{eq0Z},
 \eqref{bc0Z}, or, in the notation of Section \ref{linmove}, 
 $Z\in \cA^*$. 
Moreover, for each $x\in \Om$ relations \eqref{eq:harn} and 
\eqref{bud} give 
\begin{equation*}
 \inf_{t\in \R} Z(x,t)=\inf_{t\in \R} \frac{U(x,t)}{\|U(\cdot,
   t)\|_{L^\infty(\Om)}} 
\frac{\|U(\cdot, t)\|_{L^\infty(\Om)}}{\ga(t)} >0.
\end{equation*}
This implies that all functions in  $\al(Z)$ are 
(strictly) positive on $\Om$.  
Applying Theorem \ref{thmsurvey} to $Z$, 
we obtain $\La(Z(\cdot, t)) = 0$ for each $t\in\R$.
 Since, obviously, $\La(U(\cdot, t))
= \La(Z(\cdot, t))$, Lemma \ref{nul} is proved.

\section{Proof of Theorem \ref{thm1}}
\label{proof2}
Assume the hypotheses of Theorem \ref{thm1} to be satisfied. Recall
that $\om(u)$ is a compact subset of $C_0(\Om)$; we view it as
compact metric space with the induced norm (the supremum norm). 
Our first concern is to
show that there is a flow on $\om(u)$ defined by elements of
$\mathcal{A}$, that is, 
bounded entire solutions of problem \eqref{lim}. This is
not completely obvious, for under our assumptions 
one  cannot in general expect 
the initial-value problem for \eqref{eq0}, \eqref{bc0} 
to be well-posed in $C_0(\Om)$.

Fix any $z_0 \in \om(u)$.  Then there is 
$V \in \mathcal{A}$ such that $V(\cdot, 0) = z_0$
and $V(\cdot, t) \in \omega(u)$ for any $t \in \R$.
This can be proved by a straightforward
modification of the arguments given in the proof
of Lemma \ref{eqc}: in addition to
taking time intervals in $(0,\infty)$, rather than in 
$(-\infty,\infty)$, one needs to include the function 
$h$ in the linear nonhomogeneous
equation \eqref{eq:forlp}. 
Since $h$ is bounded, the regularity
estimates and the rest of the arguments go through
(note in particular that, thanks to  hypothesis (H), one obtains the
same limit autonomous equation as in \eqref{eq:nonhheat}).

Next we show that $V$ is
uniquely defined. Indeed, if  $V, \tilde V\in \cA$ satisfy  
$V(\cdot, 0) = z_0=\tilde V(\cdot, 0) $, then $w=V-\tilde V$
 is a solution of a
linear problem \eqref{prob-lin-e}, \eqref{prob-lin-bc}  
with $L \in \cE(  \beta_0, \Om \times \R)$ and $h \equiv 0$. Also $w(\codt,0)\equiv 0$. 
The maximum principle implies the uniqueness for the initial-boundary
value problem:  $w\equiv 0$ on $\Om\times [0,\infty)$. To prove
that $w\equiv 0$ on $\Om\times (-\infty,0]$ one uses 
 backward uniqueness for parabolic equations (see Remark \ref{rk:bu}).

In view of the uniqueness of $V$, setting 
$S_t z_0 := V(\cdot, t)$ for $t\in R$, 
we have defined a family $S$ of maps on
$\om(u)$.  We show that $S$ is a flow, that is,
\begin{itemize}
\item[(i)] $S_0$ is the identity on $\om(u)$,
\item[(ii)] $S_{t+ s} = S_t S_s$ \quad ($s, t\in \R)$, 
\item[(iii)] for each $t_0 \in \R$, the map $S_{t_0}$ is continuous.   
\end{itemize}
The fact that $S_0=I$ is obvious. The group property (ii) follows
from the uniqueness of $V$  and the time-translation invariance of
\eqref{lim}. To prove (iii), take  first $t_0 > 0$. 
For any $z_1, z_2 \in \omega (u)$, the function
 $w (\cdot, t) = S_t z_1 - S_t z_2$ is a solution of 
a linear problem \eqref{prob-lin-e}, \eqref{prob-lin-bc} on $\Om \times \R$
 with $L\in \mathcal{E}(\be_0, \Om \times \R)$ and $h \equiv 0$. By
Theorem \ref{max:principle},
\begin{equation*}
\| S_{t_0} z_1 - S_{t_0} z_2\|_{L^\infty(\Om)} \leq C(t_0) \| z_1 -  z_2\|_{L^\infty(\Om)} 
\end{equation*}
and the continuity of $S_{t_0}$ follows. Now let $t_0<0$. 
Properties (i) and (ii)  imply that  $S_{t_0}$ is the inverse to the
continuous map $S_{-t_0}$. Since $\om(u)$ is compact, the inverse is
continuous.

In the  next lemma, we show that $\omega(u)$ 
is \emph{chain transitive} under the flow $S$.
This means that for any $\phi, \psi \in \omega (u)$
and any $\varepsilon > 0$, $T > 0$ there exist
an integer $k \geq 1$, real numbers 
$t_1, \cdots, t_k \geq T$,
 and points
$\phi_0, \phi_1, \cdots, \phi_k \in \omega (u)$ with $\phi_0 = \phi$,
$\phi_k = \psi$, 
such that
\begin{equation}\label{jumpabst}
\|S_{t_{i + 1}} \phi_i - \phi_{i + 1}\|_{L^\infty(\Om)} < \varepsilon \qquad (0 \leq i < k) \,.
\end{equation}
This in particular means that $\omega(u)$ is 
\emph{chain recurrent}, that is, the above condition is satisfied with 
$\psi=\phi$, for any  $\phi\in \omega (u)$.

The following lemma is very similar to 
\cite[Lemma~7.5]{Chen-P:1}, \cite[Lemma~4.5]{p-Fo:conv-asympt} (see
also \cite{Mischaikow-S-T}); however, we cannot directly apply those
results here since the flow $S$ is not defined outside  $\omega(u)$.

\begin{lemma}\label{lem:ch:rec}
The set $\omega(u)$ is chain transitive under the flow $S$. 
\end{lemma}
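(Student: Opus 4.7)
The plan is to combine the attracting property \eqref{oas} of $\omega(u)$ with a uniform approximation of the non-autonomous evolution by the autonomous flow $S$ on $\omega(u)$ for large times. The core step is the following approximation lemma: \emph{for each $\varepsilon>0$ and $T>0$ there exist $\delta>0$ and $R>0$ such that whenever $s\ge R$, $\varphi\in\omega(u)$, $\|u(\cdot,s)-\varphi\|_{L^\infty(\Om)}<\delta$, and $\tau\in[T,2T]$, one has $\|u(\cdot,s+\tau)-S_\tau\varphi\|_{L^\infty(\Om)}<\varepsilon$.}

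To prove the lemma, I argue by contradiction: suppose there exist $\varepsilon_0>0$ and sequences $s_n\to\infty$, $\tau_n\in[T,2T]$, $\varphi_n\in\omega(u)$ with $\|u(\cdot,s_n)-\varphi_n\|_\infty\to 0$ but $\|u(\cdot,s_n+\tau_n)-S_{\tau_n}\varphi_n\|_\infty\ge\varepsilon_0$. By compactness of $\omega(u)$ and $[T,2T]$, extract $\varphi_n\to\varphi^*\in\omega(u)$ and $\tau_n\to\tau^*\in[T,2T]$, hence $u(\cdot,s_n)\to\varphi^*$. The shifted functions $u^{s_n}(x,t):=u(x,s_n+t)$ satisfy the H\"older estimate \eqref{eq:hol1} uniformly in $n$, so by (H) and the diagonalization argument from the proof of Lemma~\ref{eqc} (as indicated at the beginning of Section~\ref{proof2}), a further subsequence yields $u^{s_n}\to V$ locally uniformly on $\bar\Om\times\R$ with $V\in\mathcal A$ and $V(\cdot,0)=\varphi^*$. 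By the uniqueness of entire orbits in $\mathcal A$ through a given point, established in the construction of $S$ above, $V=S_\cdot\varphi^*$, so $u(\cdot,s_n+\tau_n)\to S_{\tau^*}\varphi^*$. Writing
\[
S_{\tau_n}\varphi_n-S_{\tau^*}\varphi^* \;=\; (S_{\tau_n}\varphi_n-S_{\tau_n}\varphi^*)+(S_{\tau_n}\varphi^*-S_{\tau^*}\varphi^*),
\]
bounding the first term by Theorem~\ref{max:principle} (whose constant depends only on $2T$) applied to the difference of two entire orbits, and the second by continuity of $t\mapsto S_t\varphi^*$, we obtain $S_{\tau_n}\varphi_n\to S_{\tau^*}\varphi^*$, contradicting $\varepsilon_0>0$.

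With the approximation lemma in hand, the chain is easy to assemble. Given $\phi,\psi\in\omega(u)$ and $\varepsilon,T>0$, apply the lemma with $\varepsilon/3$ to obtain $\delta\in(0,\varepsilon/3)$ and $R>0$, and use \eqref{oas} to pick $R'\ge R$ with $\dist_{C_0(\bar\Om)}(u(\cdot,t),\omega(u))<\delta$ for all $t\ge R'$. Since $\phi,\psi\in\omega(u)$, choose $r_0\ge R'$ with $\|u(\cdot,r_0)-\phi\|_\infty<\delta$ and $r_k\ge r_0+2T$ with $\|u(\cdot,r_k)-\psi\|_\infty<\delta$, partition $[r_0,r_k]$ by $r_0<r_1<\cdots<r_k$ with each $r_{i+1}-r_i\in[T,2T]$, and for $0<i<k$ pick $\phi_i\in\omega(u)$ with $\|u(\cdot,r_i)-\phi_i\|_\infty<\delta$; set $\phi_0:=\phi$, $\phi_k:=\psi$, and $t_{i+1}:=r_{i+1}-r_i\ge T$. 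The approximation lemma gives $\|S_{t_{i+1}}\phi_i-u(\cdot,r_{i+1})\|_\infty<\varepsilon/3$, while $\|u(\cdot,r_{i+1})-\phi_{i+1}\|_\infty<\delta\le\varepsilon/3$, and together these yield \eqref{jumpabst}. The principal obstacle is the approximation lemma itself: since the Cauchy problem for \eqref{eq0} need not be well-posed in $C_0(\bar\Om)$, one cannot estimate $u(\cdot,s+\tau)-S_\tau\varphi$ by a direct continuity bound, and the compactness-plus-uniqueness detour above succeeds only because the uniqueness of entire orbits through $\varphi^*$ pins the subsequential limit down as $S_\cdot\varphi^*$.
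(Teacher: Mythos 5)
Your proof is correct, but it takes a different route from the paper's, and the difference is worth noting. The paper estimates $\|S_{t_{i+1}}\phi_i - u(\cdot, s_{i+1})\|_{L^\infty(\Om)}$ \emph{directly}: the function $w_i(x,t) := S_t\phi_i(x) - u(x, s_i+t)$ satisfies a nonhomogeneous linear parabolic problem $(w_i)_t = L_i(x,t)w_i + h(x,s_i+t)$ (via the Hadamard formula), and Theorem~\ref{max:principle} gives
\begin{equation*}
\|w_i(\cdot, t_{i+1})\|_{L^\infty(\Om)} \le C_0\left(\|u(\cdot,s_i)-\phi_i\|_{L^\infty(\Om)} + \|h\|_{L^\infty(\Om\times(s_i,s_i+2T))}\right),
\end{equation*}
which is small once $s_i$ is large enough that both $u(\cdot,s_i)$ is $\varepsilon/(3C_0)$-close to $\om(u)$ and $\|h\|$ has dropped below $\varepsilon/(3C_0)$. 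This requires no well-posedness of the Cauchy problem for \eqref{eq0}: one is comparing two classical solutions that both already exist, and the maximum principle supplies the a priori bound on their difference. Your stated reason for avoiding such a direct bound --- that the Cauchy problem need not be well-posed in $C_0(\bar\Om)$ --- is thus not an actual obstacle here; you yourself invoke Theorem~\ref{max:principle} to control $S_{\tau_n}\varphi_n - S_{\tau_n}\varphi^*$, and the same estimate works for $S_t\phi_i - u(\cdot, s_i+t)$ with the extra forcing term $h$. Your compactness-and-uniqueness argument for the approximation lemma is nonetheless valid: the diagonalization yields a limit $V\in\cA$, uniqueness of entire orbits (from Remark~\ref{rk:bu} and the maximum principle) pins down $V = S_\cdot\varphi^*$, and the splitting $S_{\tau_n}\varphi_n - S_{\tau^*}\varphi^* = (S_{\tau_n}\varphi_n - S_{\tau_n}\varphi^*) + (S_{\tau_n}\varphi^* - S_{\tau^*}\varphi^*)$ closes the contradiction. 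So your route is correct but softer and longer; the paper's direct quantitative estimate is the more economical argument and also makes explicit where hypothesis~(H) is used (to make the forcing term small, not merely to extract limits).
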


\begin{proof}
Fix any $\varepsilon, T > 0$ and $\phi, \psi \in \omega(u)$. Denote $I= [T, 2T]$ and     
let $C_0 = C^\ast(N,  \be_0, T)$, where $C^\ast$ is as in Theorem 
\ref{max:principle}. 
By \eqref{oas} and (H) we can fix $T_1$ with
\begin{align}\label{co}
\dist_{C_0(\bar{\Om})}(u(\cdot, t), \omega (u)) &<  \frac{\varepsilon}{3 C_0} \quad (t \geq T_1) \,,
\\\label{co2}
\|h(\cdot, t)\|_{L^{\infty}(\Om)} &< \frac{\varepsilon}{3C_0}\quad (t \geq T_1) 
\,.
\end{align}
Since $\phi, \psi \in \omega (u)$, 
there are $s'_2 > s'_1 \geq T_1$ with $s'_2 - s'_1 > T$,
such that
$\|u(\cdot, {s'_1}) - \phi\|_{L^\infty(\Om)} < \frac{\varepsilon}{3}$ and 
$\|u(\cdot, {s'_2}) - \psi\|_{L^\infty(\Om)} < \frac{\varepsilon}{3}$.
Clearly, there exist
$k \in \N$ and an increasing finite
sequence $(s_i)_{i = 0}^k$ with $s_0 = s'_1$, $s_k = s'_2$, 
and $2T \geq s_{i + 1} - s_i \geq T$. 
As $s_i \geq s_1' \geq T_1$,  \eqref{co} implies
the existence of points  $\phi_i\in \omega (u)$, $i \in \{0,\dots,k \}$,
 with $\phi_0 = \phi$, 
$\phi_k = \psi$, and $\|\phi_i - u(\cdot, {s_i})\|_{L^\infty(\Om)} \leq \frac{\varepsilon}{3C_0}$.
We show that these points satisfy \eqref{jumpabst} with
 $t_i := s_{i} - s_{i - 1} \in [T, 2T]$. 
Indeed, 
\begin{equation*}
\|S_{t_{i + 1}} \phi_i - \phi_{i + 1}\|_{L^\infty(\Om)} 
\leq \|S_{t_{i + 1}} \phi_i - u(\cdot, s_{i + 1})\|_{L^\infty(\Om)} +
\|u(\cdot, s_{i + 1}) - \phi_{i + 1}\|_{L^\infty(\Om)} \,.
\end{equation*}
Now, the function $w_i(x, t) := S_{t}\phi_i(x) - u(x, s_i + t)$
satisfies 
\begin{alignat*}{2} 
(w_i)_t &= L_i(x, t) w_i + h(x, s_i + t),& \qquad &(x, t) \in \Om \times (0, \infty)\,, \\
w_i &= 0, & \qquad &(x, t) \in \partial\Om \times (0, \infty)\,, \\
w_i(\cdot, 0) &= u(\cdot, s_i) - \phi_i, & \qquad &x \in \Om\,,
\end{alignat*}
where  $L_i \in \mathcal{E}( \be_0, \Om \times (0, \infty))$. By Theorem \ref{max:principle}, \eqref{co}, and 
\eqref{co2}, 
\begin{align*} 
\|w_i(\cdot, t_{i + 1})\|_{L^\infty(\Om)} &\leq C_0 ( \|u(\cdot, s_i) - \phi_i\|_{L^\infty(\Om)} + 
\|h \|_{L^\infty(\Om \times (s_i, s_i + 2T))}) \\
&< C_0 \left( \frac{\varepsilon}{3C_0} + \frac{\varepsilon}{3C_0} \right)
 = \frac{2\varepsilon}{3}\,.
\end{align*}
By the definition of $\phi_{i}$ we obtain
\begin{equation*}
\|S_{t_{i + 1}} \phi_i - \phi_{i + 1}\|_{L^\infty(\Om)} 
\leq \|w_i(\cdot, t_{i + 1})\|_{L^\infty(\Om)} + \|u(\cdot, s_{i + 1}) - \phi_{i + 1}\|_{L^\infty(\Om)} 
< \varepsilon \,.
\end{equation*}
\end{proof}

We are now ready to  complete the proof of Theorem \ref{thm1}.

\begin{proof}[Proof of Theorem \ref{thm1}]
By Lemma \ref{efi}, the  set $E_+$ is finite.
Let $k$ be the number of elements of $E_+\cap \om(u)$.
We  write these elements  in the order of decreasing values of $\La$: 
\begin{equation*}
  E_+\cap \om(u)=\{z_1,\dots, z_k\}, \quad \La(z_1)\ge \dots\ge \La(z_k)
\end{equation*}
(we choose an arbitrary order among the elements with the same value
of $\La$). 

Now consider the following system of $k+1$ subsets of $\om(u)$: 
\begin{equation}
  \label{eq:morse}
  M_1:=\{z_1\}, \dots, M_k:=\{z_k\},\ M_{k+1}:=\{z\in \om(U):\La(z)=0\}. 
\end{equation}
By Theorem \ref{thm2}, for each $U\in \om(u) \setminus \bigcup_k M_{k} \subset \cA$ 
one has $\al(U)\subset
M_i$,  $\om(U)\subset
M_j$, for some $i>j$.   This means, in the terminology of 
\cite{Conley}, that the flow $S$ admits a
Morse decomposition with the Morse sets \eqref{eq:morse}.
By \cite[Theorem II.7.A]{Conley}, every chain recurrent set of $S$ is
a subset of the union the Morse set. Hence, by Lemma \ref{lem:ch:rec},
\begin{equation*}
  \om(u)\subset \bigcup_{j=1,\dots,k+1} M_j. 
\end{equation*}
Since  $\omega(u)$ is connected, it must be equal to one of the sets
$M_j$, which  gives the conclusion of Theorem \ref{thm1}.
\end{proof}

\section{Appendix A: Proof of Theorem \ref{thmsurvey}}
\label{appen}
Assume that $g$ is a function satisfying conditions (G1)-(G3) and 
$U$ is bounded (nonnegative) 
entire solution of \eqref{tdc}. Recall from Section \ref{linmove}
that the H\"older estimate  \eqref{eq:hol1U} holds and the trajectory
$\{U(\codt,t):t\in\R\}$ is relatively compact in $C_0(\Om)$.

Assume also that there is  $z_0\in \al(U)$ such that $z_0>0$ in
$\Om$. To prove Theorem \ref{thmsurvey}, we need to show that
 $\La(U(\cdot, t)) = 0$ for each $t \in \R$. 

Let
$\la_0 := \Lambda(z_0).$

We use the following lemma.
\begin{lemma}\label{cent}
  There is $z_1\in \al(U)$ 
such that $\La(z_1)\le \la_0$ and $V_\la z_1 > 0$ in 
$\Om_\la$ for each $\la\in [\la_0,\ell) \setminus \{0\}$. 
\end{lemma}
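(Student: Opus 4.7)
The plan is to construct $z_1$ as a time-$1$ shift of $z_0$ along a suitable limit of translates of $U$, and then to exploit the hypothesis $z_0>0$ in $\Om$ to exclude the degenerate case of the strong maximum principle. Choose a sequence $t_n\to-\infty$ with $U(\codt,t_n)\to z_0$ in $C_0(\bar\Om)$. Using the H\"older estimate \eqref{eq:hol1U} together with interior $L^p$-estimates (as in the proof of Lemma~\ref{eqc}), after passing to a subsequence, the translates $U_n(x,\tau):=U(x,t_n+\tau)$ converge uniformly on $\bar\Om\times[0,1]$, and in $C^{1+\be,\be/2}_{\textrm{loc}}(\Om\times[0,1])$, to a function $V\in C(\bar\Om\times[0,1])$ satisfying $V\ge 0$, $V|_{\partial\Om\times[0,1]}=0$, $V(\codt,0)=z_0$, and $V(\codt,\tau)\in\al(U)$ for every $\tau\in[0,1]$ (since $t_n+\tau\to-\infty$). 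I would then set $z_1:=V(\codt,1)\in\al(U)$.

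For $\la\ge 0$ set $w^\la(x,\tau):=V(P_\la x,\tau)-V(x,\tau)$ on $\Om_\la\times[0,1]$, and analogously $w_n^\la$ for $U_n$. By the construction of Section~\ref{linmove}, each $w_n^\la$ is a classical solution of a linear equation with operator in $\cE(\be_0,\Om_\la\times[0,1])$. Passing to the limit along a further subsequence (weak-$*$ convergence of the coefficients in $L^\infty$ together with local strong convergence of $w_n^\la$ and $\nabla w_n^\la$) one finds that $w^\la$ itself solves a linear parabolic equation with operator in $\cE(\be_0,\Om_\la\times[0,1])$. Since $V\ge 0$ and $V|_{\partial\Om}=0$, $w^\la\ge 0$ on $\partial\Om_\la\times[0,1]$, and for $\la\ge\la_0$ the initial value $w^\la(\codt,0)=V_\la z_0$ is nonnegative on $\Om_\la$ (by the definition of $\La(z_0)=\la_0$ and continuity in $\la$). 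Theorem~\ref{max:principle} then yields $w^\la\ge 0$ on $\Om_\la\times[0,1]$; consequently $V_\la z_1\ge 0$ on $\Om_\la$ for every $\la\ge\la_0$, and $\La(z_1)\le\la_0$.

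The main obstacle, and the heart of the argument, is to upgrade this to strict positivity $V_\la z_1>0$ on $\Om_\la$ for every $\la\in[\la_0,\ell)\setminus\{0\}$. Fix such a $\la$ and suppose, for contradiction, that $V_\la z_1$ vanishes at some interior point of $\Om_\la$. Applied componentwise to the nonnegative solution $w^\la$, the strong maximum principle then forces $w^\la\equiv 0$ on $D\times[0,1]$ for some connected component $D$ of $\Om_\la$; in particular $V_\la z_0\equiv 0$ on $D$. Extending by continuity to $\bar D$ and picking any $x\in\partial D\cap\partial\Om$, we have $z_0(x)=0$ and hence $z_0(P_\la x)=0$. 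The set $\mathcal M:=P_\la(\partial D\cap\partial\Om)\cap\Om$ is nonempty because $\la>0$ and $\Om$ is bounded and convex in the $x_1$-direction; this is exactly the argument showing $\mathcal M\ne\emptyset$ in the proof of Lemma~\ref{cor:equil}. Thus $z_0$ vanishes at an interior point of $\Om$, contradicting the hypothesis $z_0>0$ in $\Om$. This forces $V_\la z_1>0$ on $\Om_\la$ for every $\la\in[\la_0,\ell)\setminus\{0\}$ and completes the proof of the lemma.
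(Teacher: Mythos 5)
Your proof is correct, but it takes a genuinely different route from the paper. The paper's proof works with $V_\la U=w^\la$ directly (a classical solution of the linear problem \eqref{ref}, \eqref{bcd}) and invokes a quantitative Harnack-type inequality from \cite[Lemma~3.5]{P:est} on the time window $[-t_n,-t_n+1]$. Strict positivity of $z_0$ forces $V_\la z_0>0$ on $\partial\Om_\la\cap\partial\Om$, hence $V_\la z_0\not\equiv 0$ on each component $G$, which yields a ball $B_0\subset G$ and a lower bound $V_\la z_0>3r_0$ there; the Harnack estimate then propagates this lower bound to subdomains $D\ssubset G$ at time $-t_n+1$, up to a negative-part error on the parabolic boundary that tends to zero. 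Passing to the limit in the \emph{inequality} gives $V_\la z_1>0$ on $D$, hence on $G$. Your argument instead constructs the limit function $V$ of the translates, shows $w^\la=V_\la V$ solves a linear parabolic equation, and derives a contradiction via the strong maximum principle and the geometric fact $P_\la(\partial D\cap\partial\Om)\cap\Om\ne\emptyset$ (borrowed from the proof of Lemma~\ref{cor:equil}): if $V_\la z_1$ vanished at an interior point, then $V_\la z_0\equiv 0$ on a component $D$, so $z_0$ would vanish at the interior point $P_\la x$ for a suitable $x\in\partial D\cap\partial\Om$. This is an elegant qualitative replacement for the Harnack estimate.

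Two technical points deserve more care than your sketch gives them. First, in the setting of Theorem~\ref{thmsurvey} the nonlinearity $g$ satisfies only (G1)--(G3), not the H\"older condition in (F1), so the Schauder argument from the proof of Lemma~\ref{eqc} does not carry over and the limit $V$ need not be a classical solution of any equation. What one actually obtains from the interior $L^p$-estimates is that $V\in W^{2,1}_{p,\textrm{loc}}$ and $w^\la$ is a \emph{strong} solution of the limiting linear equation (the principal part being exactly $\Delta$ is what makes this work). Second, you then need the strong maximum principle in the strong-solution (Bony-type) version rather than the classical one; this is standard but should be said. The paper's Harnack-route avoids both issues by passing to the limit only in an inequality applied to the honest classical solution $V_\la U$, which is arguably the cleaner path under these low-regularity hypotheses, whereas your route avoids citing the Harnack lemma at the cost of those regularity bookkeeping steps.
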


Suppose for a while that the statement in
 Lemma \ref{cent} is true. Let us show how it
implies the desired conclusion. 

By Lemma \ref{l:dec},  $\sigma :=\lim_{\tau\to-\infty}\La (U(\cdot,
\tau))$ satisfies $\La(U(\cdot,t))\le \sigma$ for each $t$ and 
\begin{equation} \label{eq:7a}
  \sigma\ge \La(z)\quad(z\in \al(U)).
\end{equation}
If $\la_0>0$,  then Lemma \ref{cent} in conjunction with Lemma
\ref{l:ref-2-moved}(ii) implies that $\sigma < \la_0$ in contradiction
to \eqref{eq:7a}. Thus $\la_0=0$. Now for each $\la>0$,
Lemmas \ref{cent} and
\ref{l:ref-2-moved}(ii) imply that $\sigma < \la$. Hence $\sigma=0$, and
consequently  $\La(U(\cdot, t)) = 0$ for each $t \in \R$.

It remains to prove the lemma.

\begin{proof}[Proof of Lemma \ref{cent}] 
There is a sequence  $t_n \to \infty$ such that
 $U(\cdot, -t_n) \to z_0$. Passing to a subsequence, we may also
 assume that $U(\cdot, -t_n+1)$ converges to some $z_1$ in $C_0(\Om)$
 (this follows by the compactness of the trajectory of $U$). Of
 course, $z_1\in \al(U)$. We show that  $z_1$ has the properties stated in
 the lemma. 

Pick any $\la \in [\la_0, \ell)\setminus \{0\}$.
 Since $z_0>0$ in
$\Om$,  we have $V_{\lambda} z_0 > 0$ on 
$\partial \Om_{\la} \cap \partial \Om$. 
Since $\Om$ is convex in $x_1$ (hypothesis (D1)), 
this clearly implies that
 $V_{\lambda} z_0 \not \equiv 0$ on any connected component of
$\Om_\la$. Also, $V_{\lambda} z_0 \ge  0$ on $\Om_\la$ as $\la\ge
\la_0=\La(z_0)$. 

Let now $G$ be any connected component of $\Om_\la$. The previous remarks
imply that there exist a 
ball $B_0 \subset G$ and $r_0 > 0$ such that 
$V_{\lambda} z_0 > 3r_0$ on $B_0$.  
Then $V_{\lambda}U(\cdot, -t_n) \geq 2r_0$ on $B_0$ 
for each sufficiently large $n$, and, by \eqref{eq:hol1U}, 
there exists $\vartheta \in  (0,1/4)$ 
independent of $n$ such that 
\begin{equation}\label{ball0}
V_{\lambda}U(\cdot, t) \geq r_0 \qquad ((x, t) \in B_0 \times [-t_n,-t_n+ 4\vartheta])\,.
\end{equation} 
We now show that  $V_{\la} z_1>0$ in $G$. It is sufficient to prove that 
$V_{\la} z_1 > 0$ in $D$ for any  any 
subdomain of $D\subset G$ such that 
$\bar D\subset G$ and $B_0\subset D$. Fix any such $D$.
We use  the following Harnack-type 
estimate on the function $V_\la U$ (recall that $w^\la=V_\la U$
is a solution of the linear problem  \eqref{ref}, \eqref{bcd}):
\begin{multline}
  \label{eq:har}
  V_\la U(x,-t_n+1)\ge  \sup_{D\times(-t_n+\vartheta,-t_n+2\vartheta)}
\kappa_1 \,(V_\la U)^+\\
- \sup_{\partial_P(G\times(-t_n,-t_n+1+\theta))} \kappa_2\,(V_\la U)^-
\qquad (x\in D).
\end{multline}
Here $\kappa_1$, $\kappa_2$ are positive constants independent of $n$
and $\partial_P$ stands for the parabolic boundary:
\begin{equation} \label{dpbd}
\partial_P(G\times(-t_n,-t_n+1+\theta)):=(\bar G\times
\{-t_n\})\cup (\partial G\times (-t_n,-t_n+1+\theta)).
\end{equation}
Estimate \eqref{eq:har} is a special case of an estimate given
in \cite[Lemma 3.5]{P:est}. 

Since $G$ is a connected component of $\Om_\la$, we 
have $\partial G\subset \partial\Om_\la$. Therefore, 
by \eqref{bcd}, $V_\la U\ge 0$ on  $\partial G\times \R$. Moreover,
since $V_\la U(\cdot,-t_n)\to V_\la z_0\ge 0$, uniformly  in
$\Om_\la$, the last term in \eqref{eq:har} approaches 0 as
$n\to\infty$. Using this and 
\eqref{ball0}, we obtain, upon passing to the limit in \eqref{eq:har},
that  $V_{\la} z_1\ge \kappa_1 r_1>0$ on $D$, as desired.

We have thus shown that $V_{\la} z_1>0$ in any connected component of
$\Om_\la$, hence $V_{\la} z_1>0$ in $\Om_\la$. Since 
$\la \in [\la_0, \ell)\setminus \{0\}$ was arbitrary, at the same time 
we have verified that $\La(z_1)\le \la_0$. 
The proof of the lemma is  complete.
\end{proof}

\section{Appendix B: Proof of (\ref{eq:regt})}
\label{appenB}
Assume that  (D1), (D2), (A), (F1), and (F2) hold and let  
$U$ be an arbitrary entire solution of   
\eqref{lim}. We verify that  the function
$\tilde U:t\mapsto U(\codt,t)$ satisfies
\begin{equation}
  \label{eq:regt1}
  \tilde U \in C(\R, H^1_0({\Om})) \cap L^2_{\textrm{loc}}(\R, D(\Delta))
\end{equation}
(see Remark \ref{rk:bu} for the meaning of  $D(\Delta)$).

First we rewrite \eqref{lim} as a linear nonhomogeneous problem
(cp. \eqref{eq:forlp}):  
\begin{alignat}{2}
    U_t &= L(x,t) U+f(x,0,0)\,, \qquad &&(x, t) \in
    \Om\times \R \,,\label{equc}\\ 
   U&=0 ,\qquad && (x, t) \in \partial\Om \times \R \,, \label{bcuc} 
\end{alignat}
where $L \in \cE(  \beta_0, \Om \times \R)$.  Note that 
since the principal part of $L$ is the Laplacian, \eqref{equc} can be
considered as an equation in the divergence form or nondivergence
form, as desired.  
We claim that $U$ is a weak solution of \eqref{equc}, \eqref{bcuc}. 
This is not completely obvious, even though 
$U$ is a classical solution, due to the lack of regularity of $\Om$. 
The nontrivial part of the claim is that 
$\tilde U\in L^2_{\textrm{loc}}(\R, H^1_0(\Om))$. 
We verify this by an approximation procedure.
Take a sequence of smooth domains $\Om_n\subset
\Om$ such that $\Om_n\subset \bar \Om_n\subset  \Om_{n+1}$ ($n=1,2,\dots$)
and  $\partial \Om_n$ approaches $\partial \Om$ in the
Hausdorff distance. Also, let $\eta_n:\R^N\to [0,1]$, $n=1,2,\dots,$  
be smooth functions  such that for  $n=2,3,\dots,$ one has 
$\eta_n\equiv 0$ on $\R^N\setminus \Om_n$ and 
$\eta_n\equiv 1$ on $\Om_{n-1}$. 

Fix any  $T\in (0,\infty)$.
On $\Om_n\times (-T,T)$, we solve the following 
initial-boundary value problem:
\begin{equation} \label{equcn}
 \begin{aligned}
    U^n_t &= L(x,t) U^n+f(x,0,0)\,,&& \qquad (x, t) \in
    \Om_n\times (-T,T) \,,\\ 
   U^n(x,t)&=0,&& \qquad (x, t) \in \partial\Om_n \times (-T,T)
   \,, \\
 U^n(x,-T)&=\eta_n(x)U(x,-T) ,&& \qquad x \in \Om_n 
   \,. 
\end{aligned} 
\end{equation}
There is a unique weak solution $U_n$ of 
\eqref{equcn} and, as $\Om_n$ is smooth, it coincides
with the unique strong solution (see 
\cite{Ladyzhenskaya-S-U, Lieberman:bk} for these concepts and
results). 
Now, $U^n-U$ is a strong solution of the linear equation
\begin{equation*}
    V_t = L(x,t) V\,, \quad (x, t) \in
    \Om_n\times (-T,T).
\end{equation*} 
By the maximum principle for strong solutions, 
there is a constant independent of $C$ such that 
\begin{equation}\label{maxpn}
  \|U-U^n\|_{L^\infty(\Om_n\times (-T,T))}\le C \|U^n - U\|_{L^\infty(\partial_P (\Om_n \times (-T, T))} \,,
\end{equation}
where $\partial_P$ stands for the parabolic boundary (cp. \eqref{dpbd}).
 Since $U \in C( \bar \Om\times [-T, T])$ and it satisfies the Dirichlet boundary condition,
the right hand side of \eqref{maxpn} converges to zero  as
$n\to\infty$. Thus, extending $U^n$ by zero outside $\Om_n\times
[-T,T)$, we have  $U-U^n\to 0$  in  $L^\infty(\Om \times (-T,T))$. At the
same time, since $0\le U^n\le U$ on the parabolic boundary of 
$\Om_n \times (-T,T)$, one can estimate the 
$L^2 ((-T,T), H^1_0({\Om_n}))$--norm of the function 
$\tilde U_n: t\mapsto U_n(\codt,t)$ 
by a constant independent of $n$ (see for example 
\cite[Theorem 6.1]{Lieberman:bk}).
Obviously, 
the $L^2 ((-T,T), H^1_0({\Om_n}))$--norm coincide with   
the $L^2 ((-T,T), H^1_0({\Om}))$--norm  for the extended function.  
Thus, passing to a subsequence if
necessary, we obtain 
that $\tilde U^n\to \tilde U$ 
weakly in $L^2 ((-T,T),
H^1_0({\Om}))$. Since $T$ was arbitrary, 
this gives us the desired conclusion that $\tilde U\in
L^2_{\textrm{loc}}(\R, H^1_0(\Om))$.
  
The Lipschitz continuity of $f(x,u,p)$ in $(u,p)$ (and the
continuity in $x$) now implies that the function 
$\phi(x, t)  := f(x, U(x, t), \nabla U(x, t))$ belongs to
$L^2(\Om\times (-T,T))$ for any $T>0$. Also, 
$\tilde U(\cdot,\tau)\in H^1_0(\Om)$ for almost all $\tau$. 
Pick any such $\tau\in (-\infty,0)$ and set $T:=-\tau$. 
The problem 
 \begin{alignat*}{2}
    V_t &= \De V+\phi(x,t)\,,&& \qquad (x, t)\in
    \Om\times (-T,T) \,,\\ 
   V(x,t)&=0,&& \qquad (x, t) \in \partial\Om \times (-T,T)
   \,,\\
 V(x,-T)&=U(x,-T) ,&& \qquad x \in \Om 
   \, 
\end{alignat*} 
has a unique weak solution $V$, and, as $U(\codt,-T)=U(\codt,\tau)\in
H^1_0(\Om)$ and $\phi\in L^2(\Om\times (-T,T))$, the function $\tilde
V:t\mapsto V(\codt,t)$ satisfies
\begin{equation}
  \label{eq:Vreg}
  \tilde V \in C([-T,T), H^1_0({\Om})) \cap L^2((-T,T), D(\Delta))
\end{equation}
(see for example \cite[Section II.3]{Temam:bk}). 
Using the fact that $\tilde U\in L^2((-T,T), H^1_0(\Om))$ one shows
easily that $U$ has to coincide with the weak solution $V$. Since
$T=-\tau$ can be taken arbitrarily large, we have verified that
\eqref{eq:regt1} holds. 

\bibliographystyle{amsplain}

\providecommand{\bysame}{\leavevmode\hbox to3em{\hrulefill}\thinspace}
\providecommand{\MR}{\relax\ifhmode\unskip\space\fi MR }
\providecommand{\MRhref}[2]{
  \href{http://www.ams.org/mathscinet-getitem?mr=#1}{#2}
}
\providecommand{\href}[2]{#2}
\def\cprime{$'$} \def\cprime{$'$} \def\cprime{$'$} \def\cprime{$'$}
  \def\cprime{$'$} \def\cprime{$'$} \def\cprime{$'$}

\end{document}